\theoremstyle{plain}
\newtheorem*{mainthm}{Main Theorem}
\theoremstyle{plain}
\newtheorem{thm}{Theorem}[section]
\theoremstyle{definition}
\newtheorem{defn}[thm]{Definition}
\theoremstyle{plain}
\newtheorem{prop}[thm]{Proposition}
\theoremstyle{plain}
\newtheorem{lemma}[thm]{Lemma}
\theoremstyle{plain}
\newtheorem{cor}[thm]{Corollary}
\theoremstyle{remark}
\newtheorem{ex}[thm]{Example}
\theoremstyle{remark}
\newtheorem{rmk}[thm]{Remark}
\newtheorem{remark}[thm]{Remark}
\theoremstyle{remark}
\newtheorem{fact}[thm]{Fact}
\numberwithin{equation}{section}
\newcommand{\R}{\underline{R}} \newcommand{\Z}{\mathbb{Z}} \newcommand{\M}{\underline{M}} \newcommand{\su}{\underline{S}} 
\newcommand{\lu}{\underline{L}}
\newcommand{\tambg}{\mathcal{T}amb_G}
\newcommand{\tambh}{\mathcal{T}amb_H}
\newcommand{\mackh}{\mathcal{M}ack_H}
\newcommand{\mackg}{\mathcal{M}ack_G}
\newcommand{\greenh}{\mathcal{G}reen_H}
\newcommand{\greeng}{\mathcal{G}reen_G}
\newcommand{\bx}{\oblong}
\newcommand{\bfm}{{\bm{m}}}
\newcommand{\ba}{{\bm{a}}}
\newcommand{\bb}{\bm{b}}
\newcommand{\bmx}{{\bm{x}}}
\newcommand{\ngh}{N^G_H}
\newcommand{\ngk}{N^G_K}
\newcommand{\nngh}{\mathcal{N}^G_H}
\newcommand{\ih}{i_H^*}
\newcommand{\ik}{i_K^*}
\newcommand{\A}{\underline{A}}
\newcommand{\og}{\mathcal{O}_G}
\newcommand{\set}{\mathscr{S}et_G^{Fin}}
\newcommand{\g}{\gamma}
\newcommand{\setiso}{\mathscr{S}et_G^{Fin,Iso}}
\newcommand{\botimes}{\bigotimes}
\newcommand{\nmnl}{\ngh \M \bx \ngh \lu}
\newcommand{\nml}{\ngh(\M \bx \lu)}
\newcommand{\C}{\mathscr{C}}
\newcommand{\setisor}{\mathscr{S}et^{Fin,Iso}}
\newcommand{\uab}{\underline{ab}}
\newcommand{\co}{\colon}
\newcommand{\free}[1][G]{\textsc{free}^{#1}}
\newcommand{\tr}[1][G]{\textsc{tr}^{#1}}
\newcommand{\Map}{Map}
\begin{document}


\title{An Equivariant Tensor Product on Mackey Functors}

\author[Hill]{Michael~A.~Hill}
\email{mikehill@math.ucla.edu}

\author[Mazur]{Kristen~Mazur}
\email{kmazur@elon.edu}

\begin{abstract}
For all subgroups $H$ of a cyclic $p$-group $G$ we define norm functors that build  a $G$-Mackey functor from an $H$-Mackey functor.  We give an explicit construction of these  functors in terms of generators and relations based solely on the intrinsic, algebraic properties of Mackey functors and Tambara functors. We use these norm functors to define a monoidal structure on the category of Mackey functors where Tambara functors are the commutative ring objects. 
\end{abstract}



\maketitle

\section{Introduction}

For a finite group $G$, a commutative $G$-ring spectrum has norm maps that are multiplicative versions of the transfer maps. These maps are not seen in ordinary $G$-spectra. Moreover, we see the algebraic shadows of these norm maps in the zeroeth stable homotopy groups of commutative $G$-ring spectra: $\underline{\pi}_0$ of a $G$-spectrum is a Mackey functor (see for example \cite{alaska}), but if $X$ is a commutative $G$-ring spectrum, then $\underline{\pi}_0(X)$ is a Tambara functor \cite{Brun}. Thus, it is a Mackey functor with a ring structure (i.e$.$ a Green functor) and an extra class of norm maps that are the multiplicative analogues of the transfer maps.

In this paper we show that the relationship between Mackey functors and Tambara functors mirrors the relationship between $G$-spectra and commutative $G$-ring spectra.  We define an equivariant symmetric monoidal structure on the category of Mackey functors under which Tambara functors are the commutative monoid objects. The category of Mackey functors is symmetric monoidal, but the commutative ring objects under the symmetric monoidal product are not Tambara functors. Notably, they do not have norm maps.  

Hill and Hopkins have developed an appropriate notion of equivariant symmetric monoidal, calling it {\emph{$G$-symmetric monoidal}} \cite{HillHopkins}. They then call the commutative monoid objects under a $G$-symmetric monoidal structure the {\emph{$G$-commutative monoids}} \cite{HillHopkins}. We provide formal definitions of these concepts in Section 5. Hill  and Hopkins \cite{HillHopkins}, Ullman \cite{Ullman1}, and Hoyer \cite{Hoyer} have independently defined  $G$-symmetric monoidal structures on the category  $\mackg$ of $G$-Mackey functors. In this paper, for $G$ a cyclic $p$-group we define a very explicit $G$-symmetric monoidal structure on $\mackg$. The key to this structure is new norm functors $\ngh\co \mackh \to \mackg$ for all subgroups $H$ of $G$.
\begin{mainthm}\label{newmain}
Let $G$ be a cyclic $p$-group. For all subgroups $H$ of $G$ there is an explicit construction of a norm functor $\ngh\co \mackh \to \mackg$ that has the following properties.
\begin{enumerate}[a.]
\item  $\ngh$ is isomorphic to the composition of functors $N^G_KN^K_H$ whenever $H<K<G$.
\item $\ngh$ is strong symmetric monoidal.
\end{enumerate}
\end{mainthm}

 The word ``construction" is deliberate. Given an $H$-Mackey functor $\M$ we build a $G$-Mackey functor $\ngh\M$ based only on the intrinsic properties of Mackey functors and Tambara functors.  We then define the functor $\ngh$ via the map $\M \mapsto \ngh \M$ and use the collection of these norm functors $\{\ngh\text{ for all }H\le G\}$ to prove the following theorem.

\begin{thm}\label{mainthm2}Let $G$ be a cyclic $p$-group. There is a $G$-symmetric monoidal structure on $\mackg$ so that a $G$-Mackey functor is a $G$-commutative monoid if and only if it has the structure of a $G$-Tambara functor.\end{thm}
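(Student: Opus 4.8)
The plan is to prove Theorem \ref{mainthm2} by using the norm functors from the Main Theorem to assemble a $G$-symmetric monoidal structure and then identifying its commutative ring objects. First I would recall the formal machinery from Section 5: a $G$-symmetric monoidal structure on $\mackg$ should amount to assigning to each finite $G$-set an appropriate indexed tensor product, compatible with restriction and induction, so that the indexing categories $\set$ organize the structure. The natural candidate is to use the norm functors $\{\ngh : H \le G\}$ to build the ``indexed monoidal product'' $\bigotimes_{G/H} \M = \ngh \ih \M$ on orbits, extending to all finite $G$-sets by taking the ordinary symmetric monoidal (box) product over the orbit decomposition. I would verify that this assignment is functorial and satisfies the coherence axioms for a $G$-symmetric monoidal category.

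The key structural input is the Main Theorem. Property (a), that $\ngh \cong N^G_K N^K_H$ whenever $H < K < G$, is precisely the associativity-type compatibility needed to make the indexed products coherent across the subgroup lattice of the cyclic $p$-group $G$; it guarantees that iterating norms along a chain of subgroups is independent of how the chain is bracketed. Property (b), that each $\ngh$ is strong symmetric monoidal, ensures that the norms interact correctly with the underlying symmetric monoidal box product, so that the external and internal products are compatible. Together these give the distributivity and associativity coherences that a $G$-symmetric monoidal structure demands. I would check these coherences orbit by orbit, reducing everything to the case of a single orbit $G/H$ via the decomposition of an arbitrary finite $G$-set.

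The heart of the theorem is the identification of $G$-commutative monoids with $G$-Tambara functors. For this I would unwind the definition of a $G$-commutative monoid: it is a $G$-Mackey functor $\M$ equipped with multiplication maps indexed by finite $G$-sets, compatible with the indexed tensor products, satisfying associativity, unitality, and commutativity. The multiplication maps indexed by the orbits $G/H$ supply, via the norm functors, maps $\ngh \ih \M \to \M$. I would show that such a coherent system of maps is exactly the data of the norm maps $N^G_H$ of a Tambara functor, with the associativity and compatibility conditions translating into the Tambara reciprocity relations and the exponential (distributivity) axioms. The underlying Green functor structure comes from the binary multiplication indexed by the trivial two-point set.

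The main obstacle I expect is the last identification, matching the abstract coherence data of a $G$-commutative monoid with the concrete relations defining a Tambara functor. On the monoid side the structure is packaged diagrammatically through the indexed products, whereas the Tambara axioms are stated in terms of explicit norm, restriction, and transfer maps and their interactions; bridging these two descriptions requires carefully tracing how the strong monoidal structure of $\ngh$ (Main Theorem, part b) converts the coherence diagrams into the multiplicativity of the norm and the Frobenius-type reciprocity laws. Because $G$ is a cyclic $p$-group, its subgroup lattice is a chain, and I would exploit this to argue inductively: the compatibility in Main Theorem (a) lets me reduce the full coherence to successive single-quotient norms $N^K_H$ where the relations are most transparent, and then build up to $G$ along the chain. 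This inductive reduction along the subgroup chain is what makes the otherwise formidable coherence check tractable.
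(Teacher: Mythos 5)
Your overall architecture matches the paper exactly: you define the $G$-symmetric monoidal structure by $G/H \otimes \M := \ngh\ih\M$ on orbits, extend over disjoint unions by the box product, and verify the axioms of Definition \ref{gsm} using parts (a) and (b) of the Main Theorem; the identification of $G$-commutative monoids with Tambara functors is then the real content, as in Propositions \ref{TFisGCM} and \ref{gcmtotf}. But your plan for that identification --- ``unwind the coherence data and match it with the Tambara axioms,'' inducting along the subgroup chain --- elides the two steps that actually carry the proof, and neither is routine.

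First, in the direction ``Tambara functor $\Rightarrow$ $G$-commutative monoid,'' the problem is to manufacture, from the internal norm maps of $\su$, honest morphisms of Mackey functors $X \otimes \su \to Y \otimes \su$, natural in maps of finite $G$-sets; concretely one needs a map of Mackey functors $\ngh\ih\su \to \su$. Knowing what such a map should do on the generators $N(s)$ is not enough: one must check that the assignment descends through the Tambara reciprocity submodule $TR$ and commutes with all restrictions and transfers. The paper's mechanism is Lemma \ref{tfadj}: extend $\ngh$ to a functor $\nngh\co \tambh \to \tambg$ on Tambara functors and prove that $\nngh$ is left adjoint to $\ih$; the counit of this adjunction then supplies the required structure maps (proof of Proposition \ref{TFisGCM}). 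Your proposal contains no substitute for this adjunction. Second, in the converse direction, after defining the norm as the composite $\M(G/H) \xrightarrow{N} (\ngh\ih\M)(G/G) \xrightarrow{\pi^*_G} \M(G/G)$, verifying Properties 5 and 6 of Definition \ref{axiomatictfdef} (Weyl invariance of the norm, and $res^G_H N^G_H(x) = \prod_{\g^j \in W_G(H)} \g^j x$) fails if one uses the Weyl action of Definition \ref{nghweyl}, which merely permutes tensor factors and forgets the intrinsic $G$-action carried by $\M$ itself. The paper must introduce a corrected Weyl action on $\ngh\ih\M$ that integrates the action of $\M$ (Proposition \ref{newaction}) and prove that the result is isomorphic to the original construction (Theorem \ref{compositioniso}); only then does the restriction-of-norm formula come out as stated. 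This subtlety is invisible in your outline. So while your route is the paper's route in outline, the proposal has genuine gaps precisely at the two points where the theorem requires new constructions rather than coherence bookkeeping.
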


Because we restrict the Main Theorem and Theorem \ref{mainthm2} to cyclic $p$-groups we are able to explicitly describe the $G$-symmetric monoidal structure defined on $\mackg$. Thus, we can work with this construction to build intuition into how the structure operates.

In this regard our definition of the norm functors in the Main Theorem is analogous to the explicit definition of the tensor product. Even though defining the tensor product via the universal property is more elegant and at times easier to work with, the explicit definition helps us understand how the tensor product combines two vector spaces. Similarly, the explicit construction of norm functors in this paper allows us to understand what the $G$-symmetric monoidal structure actually does to the objects of $\mackg$. Moreover, while we feel strongly that we can extend our results at least to all finite abelian groups, many aspects would become so much more complicated that the results would no longer develop the reader's intuition for Mackey and Tambara functors.

 
 Even before the emergence of Tambara functors in equivariant stable homotopy theory, there was interest in developing a structure on $\mackg$ that supported Tambara functors as ring objects. During an open problem session at the 1996 Seattle Conference on Cohomology, Representations and Actions of Finite Groups T. Yoshida posed the problem of defining a tensor induction for Mackey functors that preserves tensor products of Mackey functors and satisfies Tambara's axioms for multiplicative transfer \cite{Benson}. The $G$-symmetric monoidal structure that we create is such a tensor induction.

 We organize this paper into the following sections. In Section 2 we provide two definitions of Tambara functors. The first is Tambara's original definition. The second is an axiomatic and constructive definition. We base the construction of the norm functors in the Main Theorem on the second definition. In Section 3 we build the norm functors, and in Section 4 we prove that these functors satisfy the Main Theorem. Finally, we prove Theorem \ref{mainthm2} in Section 5.


\section{Tambara Functors}
 
 Let  $\mathscr{S}et$ be the category of sets, and let $\set$ be the category of finite $G$-sets. Given morphisms $f\co X \to Y$ and $p\co A \to X$ in $\set$ we define the \textit{dependent product} $\prod_f A$ by $$\prod_fA = \left\{ (y, \sigma)\middle| \begin{array}{c} y \in Y, \\  \sigma\co f^{-1}(y) \to A \text{ is a map of sets }, \\ p\circ \sigma (x) = x \text{ for all } x \in f^{-1}(y)\end{array} \right\}.$$  The group $G$ acts on $\prod_f A$ by $\g(y,\sigma) = (\g y, \g\sigma)$ where $(\g\sigma)(x) = \g \sigma(\g^{-1}x)$. 

\begin{defn} \cite{Tambara} For all morphisms $f\co X \to Y$ and $p\co A \to X$ in $\set$ the {\emph{canonical exponential diagram}} generated by $f$ and $p$ is the commutative diagram below, where $h(y,\sigma) = y,$ the map $e$ is the evaluation map $e(x,(y,\sigma)) = \sigma(x)$ and $f'$ is the pullback of $f$ by $h$.\begin{displaymath}
\xymatrix{X \ar[d]_f & &A \ar[ll]_{p}& & X \times_Y \prod_f A   \ar[ll]_{e} \ar[d]^{f'} \\ Y &&& & \prod_fA \ar[llll]_{h}}
\end{displaymath}  We call a diagram in $\set$ that is isomorphic to a canonical exponential diagram an {\emph{exponential diagram}}. 

\end{defn}

\begin{defn}\label{origtfdef} \cite{Tambara} A \textit{$G$-Tambara functor} $\su$ is a triple $(S^*,S_*,S_{\star})$ consisting of two covariant functors $$ S_*\co \set \to \mathscr{S}et$$ $$S_{\star}\co \set \to \mathscr{S}et$$ and one contravariant functor $$S^*\co \set \to \mathscr{S}et$$ such that the following properties hold. \begin{enumerate}
\item All functors have the same object function $X \mapsto \su(X),$ and each $\su(X)$ is a commutative ring.
\item For all morphisms $f\colon X \to Y$ in $\set$ the map $S_*(f)$ is a homomorphism of additive monoids, $S_{\star}(f)$ is a homomorphism of multiplicative monoids and $S^*(f)$ is a ring homomorphism.
\item The pair $(S^*,S_*)$ is a $G$-Mackey functor and $(S^*,S_{\star})$ is a semi-$G$-Mackey functor.
\item (Distributive Law) If \begin{displaymath}
\xymatrix{X \ar[d]_f & &A \ar[ll]_{p}& & X'   \ar[ll]_{e} \ar[d]^{f'} \\ Y &&& &Y' \ar[llll]_{h}}
\end{displaymath} is an exponential diagram then the induced diagram below commutes.
$$\xymatrix{\su(X) \ar[d]_{f_{\star}} & &\su(A) \ar[ll]_{p_*} \ar[rr]^{e^*}& & \su(X')   \ar[d]^{f'_{\star}} \\ \su(Y) &&& & \su(Y') \ar[llll]_{h_*}}$$
\end{enumerate}
\end{defn}
 
 Given a morphism $f\co X\to Y$ in $\set$, we call $S_*(f)$ a \textit{transfer} map, $S^*(f)$ a \textit{restriction} map and $S_{\star}(f)$ a \textit{norm} map. In particular, if $f\co G/H \to G/K$ is a morphism between orbits, we denote the norm map $S_{\star}(f)$ by $N^K_H$, the transfer map $S_*(f)$ by $tr^K_H$, and the restriction map $S^*(f)$ by $res^K_H$. Further, given an element $g\in G$ there is also a morphism $c_g \co G/H \to G/gHg^{-1}$. Thus, we have an induced map $S_*(c_g)$ (which is equal to $S_{\star}(c_g)$ and $S^*(c_{g^{-1}})$) that we will denote by $c_g$.
 
The norm maps are not additive, but  there is an explicit formula for the norm of a sum in a Tambara functor. By Property 3 of Definition \ref{origtfdef} Tambara functors convert disjoint unions of $G$-sets to direct products of commutative rings. Since every finite $G$-set can be written as a disjoint union of orbits we only need  this formula for the norm maps $N^K_H$ for all subgroups $H<K \le G$. Moreover, since the norm map $N^K_H$ in a $G$-Tambara functor $\su$ must agree with $N^K_H$ in the $K$-Tambara functor that results from applying the forgetful functor $i_K^*\co \tambg \to \mathcal{T}amb_K$ to $\su$, it suffices to only state the formula for the norm $\ngh$ of a sum for any subgroup $H$ of $G$.

We will only use two cases of exponential diagrams, and for these it is helpful to use the following construction. Let $H$ be a subgroup of $G$ and let $X$ be a finite $H$-set. Then the $G$-set $Map_H(G,X)$ is the set of all $H$-equivariant maps $q \co G \to X$ with $G$-action $(g\cdot q)(g') = q(g'g)$ for all $g'$ and $g$ in $G$.

\begin{prop}\label{DePProdIso} Let $f \co G/H \to G/G$ be the crush map, and let $p \co A \to G/H$ be a morphism in $\set$. Then the dependent product $\prod_f A$ is canonically isomorphic to $Map_H(G,p^{-1}(eH))$.
\end{prop}

\begin{proof} The category of finite $G$-sets over $G/H$ is equivalent to $\mathscr{S}et^{Fin}_H$, and the pullback functor from the category of finite $G$-sets over $G/G$ to the category of finite $G$-sets over $G/H$ is canonically isomorphic to the restriction functor $\set \to \mathscr{S}et^{Fin}_H$. Moreover, the functor $(A \xrightarrow{p} G/H) \mapsto (\prod_f A \to G/G)$ is right adjoint to the pullback functor \cite{Tambara}. And, the functor $X \mapsto Map_H(G,X)$ is right adjoint to the restriction functor. It follows that these two right adjoints are canonically isomorphic. Therefore, for all $p \co A \to G/H$, the dependent product $\prod_f A$ is canonically isomorphic to $Map_H(G,p^{-1}(eH))$.
\end{proof}

\begin{thm}[Tambara Reciprocity for Sums]\label{TRforSumsFinCase} Let $G$ be a finite group with subgroup $H$, and let $\su$ be a $G$-Tambara functor.
\begin{enumerate}
\item If $*_1 \amalg *_2$ is the disjoint union of two single point $G$-sets then the following diagram of  rings  commutes. \small $$\xymatrix{\su(G/H) \ar[d]_{N^G_H}  &\su(G/H) \oplus \su(G/H) \ar[l]_-{\triangledown_*} \ar[r]^-{e^*} & \su(G/H \times Map_H(G,*_1\amalg*_2))   \ar[d]^{\pi_{\star}} \\ \su(G/G) & & \su(Map_H(G,*_1\amalg*_2)) \ar[ll]_-{h_*}}$$  \normalsize

\item For all $a$ and $b$ in $\su(G/H)$ $$\ngh(a+b) =\ngh(a) + \ngh(b) + T(-)$$ where $T(-)$ is a polynomial in transfers from proper subgroups, norms, and restrictions that is  universally determined by $G$ in the sense that it depends only on $G$ and not on $\su$, $a$, or $b$. Moreover, $T(-)$ lands in the ideal generated by the transfers.

\end{enumerate}
\end{thm}

\begin{proof} To prove part 1, given the fold map $\triangledown \co G/H \amalg G/H \to G/H$ and the crush map $f \co G/H \to G/G$, by Proposition \ref{DePProdIso}, the dependent product $\prod_f(G/H \amalg G/H)$  is canonically isomorphic to 
$$
Map_H(G,\ast_1\amalg\ast_2)\cong Map(G/H,*_1\amalg *_2).
$$
Therefore, the diagram below is an exponential diagram, and so the diagram in part 1 of the above theorem commutes by the Distributive Law of Definition \ref{origtfdef}.
$$\xymatrix{G/H \ar[d]_f & &G/H \amalg G/H\ar[ll]_-{\triangledown}& & G/H \times Map(G/H, {*_1}\amalg{*_2})   \ar[ll]_-{e} \ar[d]^{\pi} \\ G/G &&& & Map(G/H,{*_1} \amalg {*_2}) \ar[llll]_{h}}$$

Next, given $a$ and $b$ in $\su(G/H)$, by part 1, $\ngh \triangledown_*(a,b) = h_* \pi_{\star}e^*(a,b)$. Since $\ngh \triangledown_*(a,b) = \ngh(a+b)$, we can develop the formula given in part 2 by determining $h_*\pi_{\star}e^*(a,b)$.

We begin by examining the decomposition of $Map(G/H,*_1\amalg*_2)$ and $G/H \times Map(G/H,*_1\amalg*_2)$ into disjoint unions of $G$-orbits. We can rewrite this as $$
Map(G/H,*_1\amalg*_2)\cong G/G \amalg G/G \amalg \coprod_i Z_i
$$ 
where each $Z_i$ is of the form $G/H_i$ for some proper subgroup $H_i$. This union is difficult to determine, however it  suffices that it is universally determined by the group $G$, and none of the orbits $Z_i$ are isomorphic to $G/G$. Further, $G/H \times Map_H(G,*_1\amalg*_2)$ is isomorphic to $G/H \amalg G/H \amalg \coprod_i (G/H \times Z_i)$, and so the composition $h_* \pi_{\star} e^*$ is given below. $$\xymatrix{\su(G/H) \oplus \su(G/H) \ar[d]^-{e^*} \\ \displaystyle \su(G/H)\oplus \su(G/H) \oplus \bigoplus_i \su(G/H \times Z_i) \ar[d]^-{\pi_{\star}} \\ \displaystyle \su(G/G) \oplus \su(G/G) \oplus \bigoplus_i \su( Z_i) \ar[d]^{h_*} \\ \su(G/G)} $$
The map $e^*$ sends $(a,b)$ to $(a,b,\bm{x})$ where $\bm{x}$ is an element in $\bigoplus_i \su(G/H \times Z_i)$ that is universally determined by $G$.

Next, we determine $\pi_{\star}$. First, $\su(G/H \times Z_i)$ is isomorphic to $\coprod G/J_i$ where $G/J_i$ is the largest orbit that contains both $G/H$ and $Z_i$. Thus, let $f_i \colon \su(G/H \times Z_i) \to \su(Z_i)$ be the multiplication map composed with the appropriate norm map, and let $F \co \bigoplus_i \su(G/H \times Z_i) \to \bigoplus_i \su(Z_i)$ be $\bigoplus_i f_i$. Then $\pi_{\star} = \ngh \oplus \ngh \oplus F$, and hence $\pi_{\star}e^*(a,b) = \ngh(a) \oplus \ngh(b) \oplus F(\bm{x})$. 

Finally, $h_*$ is the identity map on the first two summands and the appropriate transfer maps on the remaining summands. Therefore, $h_*\pi_{\star}e^*(a,b)$ is $$\ngh(a) + \ngh(b) + T(-)$$ where $T(-)$ is as given in the above theorem.
\end{proof}

We explicitly describe the Tambara reciprocity relations for sums in a $C_4$-Tambara functor in Example~\ref{ex:TRFormulas}. In fact, when $G$ is a finite \textit{abelian} group we can determine more details regarding the polynomial $T(-)$. Thus, if $G$ is finite abelian, we can state an even more explicit formula for the norm of a sum in a $G$-Tambara functor.

\begin{thm}[Tambara Reciprocity for Sums When $G$ is Finite Abelian]\label{sumdef} Let $G$ be a finite abelian group and let $\su$ be a $G$-Tambara functor. If $H< G$, then for all $a$ and $b$ in $\su(G/H)$
\begin{eqnarray*}
\lefteqn{N^G_H(a+b) =} \\ && N^G_H(a) + N^G_H(b)  + \sum_{H<K<G} tr^G_{K}\left(\sum_{k=1}^{i_{K}}N^{K}_H\left((\uab)^{K}_k\right)\right) +  tr^G_H(g_H(a,b)) \nonumber
\end{eqnarray*}  where $g_H(a,b)$ is a  polynomial in some of the $W_G(H)$-conjugates of $a$ and $b$, and  each $(\uab)^{K}_k$ is a  monomial in some of the $W_G(K)$-conjugates of $a$ and $b$. These polynomials are universally determined by the group $G$ in the sense that they depend only on $G$ and not on $\su$. Each integer $i_{K}$ is also universally determined by $G$.\end{thm}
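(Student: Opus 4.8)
The plan is to deduce the entire identity from a single application of the distributive law (Property 4 of Definition \ref{origtfdef}). First I would record two facts about the Mackey functor $(S^*,S_*)$ underlying $\su$. Writing $f\co G/H \to G/K$ for the quotient map of orbits, the norm map is $N^K_H = S_\star(f)$; and writing $\nabla\co G/H \sqcup G/H \to G/H$ for the fold map, the transfer $S_*(\nabla)\co \su(G/H)\oplus\su(G/H) \to \su(G/H)$ is precisely addition, $(a,b)\mapsto a+b$. Hence $N^K_H(a+b) = S_\star(f)\bigl(S_*(\nabla)(a,b)\bigr)$ is a norm applied to a transfer, which is exactly the situation the distributive law is designed to rewrite. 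Forming the canonical exponential diagram generated by $f$ and $\nabla$, with $A = G/H \sqcup G/H$, $Y' = \prod_f A$, and structure maps $e, f', h$, the distributive law gives
\[ N^K_H(a+b) = S_\star(f)\,S_*(\nabla)(a,b) = h_*\,f'_\star\,e^*(a,b), \]
so everything reduces to understanding the $G$-set $\prod_f(G/H\sqcup G/H)$ together with the maps out of it.

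Next I would analyze that exponential set orbit by orbit. Over the basepoint $eK$ the fiber $f^{-1}(eK)$ is the regular $K/H$-set, so a point of $\prod_f A$ over $eK$ is a binary labeling $\sigma\co K/H \to \{1,2\}$ recording which of the two copies of $G/H$ is chosen over each coset. Because $G$ is abelian, $H$ is normal in $K$ and acts trivially on $K/H$, so $H$ fixes every $\sigma$; consequently each $G$-orbit of $\prod_f A$ is isomorphic to $G/L$ for a unique subgroup $L$ with $H \le L \le K$, and the restriction of $h$ to such an orbit is the projection $G/L \to G/K$, inducing $tr^K_L$. The stabilizer $L$ of $\sigma$ is its group of periods: $L/H$ is the largest subgroup of $K/H$ under which the labeling is translation-invariant. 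Since $\su(\prod_f A)$ is the product of the rings $\su(O)$ over the orbits $O$, and $h_*$ is additive with $h = \coprod_O (h|_O)$, the right-hand side above becomes $\sum_O tr^K_{L_O}\bigl((f'_\star e^*(a,b))_O\bigr)$, which I would then group by orbit type.

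I would read off the three families of terms from this decomposition. The two constant labelings (all $1$'s, all $2$'s) are the only $K$-fixed sections; each spans an orbit $\cong G/K$, and over it the pullback $G/H\times_{G/K}G/K \cong G/H$ makes $f'$ equal to $f$, so that $f'_\star e^* = N^K_H$ applied to $a$ or $b$ respectively, contributing $N^K_H(a)+N^K_H(b)$ (here $tr^K_K$ is the identity). For each intermediate $H < K' < K$, the sections with stabilizer exactly $K'$ fall into some number $i_{K'}$ of $K$-orbits; over the corresponding orbit $\cong G/K'$ the pullback $G/H\times_{G/K}G/K'$ is a disjoint union of copies of $G/H$ indexed by $W_K(K')$, so multiplicativity of the norm over a coproduct in the source collapses $f'_\star e^*$ into a single $N^{K'}_H$ applied to a monomial $(\uab)^{K'}_k$ in the $W_K(K')$-conjugates of $a$ and $b$. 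Additivity of $tr^K_{K'}$ then assembles the $i_{K'}$ such orbits into the inner sum $\sum_{k=1}^{i_{K'}} N^{K'}_H((\uab)^{K'}_k)$. Finally the sections with stabilizer exactly $H$ give orbits $\cong G/H$ whose pullback $G/H\times_{G/K}G/H$ is again a disjoint union of copies of $G/H$ indexed by $W_K(H)$; here the norm is trivial, so each orbit contributes a monomial in the $W_K(H)$-conjugates of $a$ and $b$, and additivity of $tr^K_H$ sums these into the polynomial $g_H(a,b)$. Collecting the three families reproduces the stated formula.

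The main obstacle is the bookkeeping in the restricted exponential diagram over each orbit: one must choose a section representative for each orbit, identify the pullback $G/H\times_{G/K}G/L$ explicitly as a coproduct of copies of $G/H$ indexed by the appropriate Weyl-group cosets, and evaluate $e^*$ on that coproduct to pin down which $\g$-conjugates of $a$ and $b$ enter each monomial $(\uab)^{K'}_k$ and each summand of $g_H(a,b)$. Once this combinatorial calculation is carried out, universality is immediate: the orbit decomposition of $\prod_f(G/H\sqcup G/H)$, the integers $i_{K'}$, and the recipe producing the monomials and the polynomial $g_H$ depend only on the lattice of subgroups between $H$ and $K$ in $G$ and on the fold map, and in no way on $\su$.
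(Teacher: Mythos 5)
Your proposal is correct and follows essentially the same route as the paper: both apply the distributive law to the exponential diagram generated by the orbit projection $G/H \to G/K$ and the fold map $G/H \amalg G/H \to G/H$, decompose the exponential object into orbits whose stabilizers lie between $H$ and $K$, and read off the three families of terms (constant sections giving $N^K_H(a)+N^K_H(b)$, intermediate stabilizers giving the $tr^K_{K'}N^{K'}_H$ terms, and minimal stabilizers giving $tr^K_H(g_H(a,b))$), with universality following because the orbit decomposition depends only on $G$. The only cosmetic differences are that the paper first reduces to $K=G$ via the forgetful functor and works with the model $Map_H(G,*_1\amalg *_2)$ for $\prod_f(G/H\amalg G/H)$, whereas you keep general $K$ and describe the sections directly as binary labelings of $K/H$.
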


\begin{proof}
Given $a$ and $b$ in $\su(G/H)$, by Theorem \ref{TRforSumsFinCase}, $\ngh(a+b) = \ngh(a) + \ngh(b) + T(-)$, but since $G$ is finite abelian we can further determine $T(-)$ by developing a more concrete formula for the composition $h_*\pi_{\star}e^*(a,b)$. 

The entire analysis hinges on the structure of the dependent product, which as we showed in Proposition~\ref{DePProdIso}, is the $G$-set $Map(G/H,\ast_1\amalg\ast_2)$. In order to make later formulae more transparent, we write the set $\ast_1\amalg\ast_2$ as $\{a,b\}$ in all that follows. 

Because $G$ is an abelian group, the stabilizer of any element $f\in Map(G/H,\{a,b\})$ contains $H$. If $H< K$, then composition with the quotient map 
$$G/H\to G/K$$ 
induces an isomorphism
$$
Map(G/H,\{a,b\})^K\cong Map(G/K,\{a,b\}).
$$
Thus, since $i_K$ is the number of orbits of functions with stabilizer exactly $K$, the above isomorphism allows us to identify $i_K$ as the following cardinality:
$$
i_K=\left|Map(G/K,\{a,b\})-\bigcup_{K\subsetneq K'} Map(G/K',\{a,b\})\right|/[G:K].
$$
 This gives us our orbit decomposition
$$
Map(G/H,\{a,b\})\cong G/G\amalg G/G\amalg\coprod_{H<K<G} \coprod_{i=1}^{i_K}G/K.
$$



 
  Therefore, the composition $h_*\pi_{\star}e^*$ is given below where $S = \su(G/H)$.
  $$\xymatrix{S \oplus S \ar[d]^-{e^*} \\  
     \displaystyle S \oplus S \oplus  \bigoplus_{H<K<G}\bigg[\bigoplus_{k=1}^{i_{K}}\Big(\bigoplus_{|G/K|}S \Big)\bigg] \oplus \bigoplus_{j=1}^{i_H} \Big(\bigoplus_{|G/H|} S\Big)  \ar[d]^-{\pi_{\star}} \\ 
  \su(G/G) \oplus \su(G/G) \oplus \bigoplus_{H<K<G} \left( \bigoplus_{k=1}^{i_{K}} \su(G/K)\right) \ar[d]^{h_*} \oplus \bigoplus_{j=1}^{i_H} S  \\ \su(G/G)} $$
  
 The map $e^*$ sends $(a,b)$ to a sequence of $a$'s and $b$'s, the order of which is determined as follows.  The codomain of $e^*$ is grouped by the orbits of $Map_H(G,\{a,b\})$, and each grouping is indexed by the elements of $G/G$, $G/H$, or $G/K$ for $H<K<G$. For example, each $\bigoplus_{|G/H|} S$ corresponds to an orbit isomorphic to $G/H$, and thus we can write $\bigoplus_{|G/H|} S$ as $$S_e \oplus S_{g_1} \oplus \cdots \oplus S_{g_{|G/H|-1}}$$ where the indexing elements $e$, $g_1$, $\dots $, $g_{|G/H|-1}$ are representatives for the elements of $G/H$. To define $e^*$, we choose a representative map $f \colon G \to \{a,b\}$ from each orbit. (The choice of representative does not matter.) We then determine the image under $f$ of each of the indexing elements. For every indexing element $g_i$, if $f(g_i)=a$, then $e^*(a,b) =a$ on the corresponding summand, and if $f(g_i)=b$, then $e^*(a,b) =b$ on the summand. For example, the first summand corresponds to the constant function that sends all elements of $G$ to $a$. Hence, $e^*(a,b)=a$ on that summand. Similarly,  $e^*(a,b)=b$ on the second summand.

Next, $\pi_{\star}$ is a combination of norm maps and maps that send a direct sum to a product over a Weyl action. More specifically, let the map $\pi_H \co \bigoplus_{|G/H|} \su(G/H) \to \su(G/H)$ be $$(s_e, s_{g_1},s_{g_2},\dots s_{g_{|G/H|-1}}) \mapsto s_eg_1 s_{g_1} g_2s_{g_2} \cdots g_{|G/H|-1}s_{g_{|G/H|-1}}.$$ The map $\pi_{K}\co \bigoplus_{|G/K|} \su(G/H) \to \su(G/H)$ is analogous. Then $\pi_{\star}$ is $$\ngh \oplus \ngh \oplus \bigoplus_{H<K<G} \bigoplus_{k=1}^{i_{K}} \left[N^{K}_H \circ \pi_{K}\right]_k \oplus \bigoplus_{j=1}^{i_H} (\pi_H)_j ,$$ where $\ngh$ and $N^{K}_H$ are the norm maps.

 Thus, if each $(\uab)^{K}_j$ is as defined in the theorem and $(\uab)^H_j$ is defined analogously, then $\pi_{\star}e^*(a,b)$ equals $$\ngh(a) \oplus \ngh(b) \oplus \bigoplus_{H<K<G} \Big(\bigoplus_{k=1}^{i_{K}} N_H^{K}((\uab)^{K}_k)\Big) \oplus \bigoplus_{j=1}^{i_H} (\uab)^H_j .$$ Finally, $h_*\pi_{\star}e^*(a,b)$ is $$N^G_H(a) + N^G_H(b)  +   \sum_{H<K<G} tr^G_{K}\left(\sum_{k=1}^{i_{K}}N^{K}_H\left((\uab)^{K}_k\right)\right) +tr^G_H(g_H(a,b)) $$ where  $g_H(a,b) = \sum_{j=1}^{i_H} (\uab)^H_j$ and thus is as defined in the above theorem.
\end{proof}

We use the following corollary frequently in the upcoming sections.

\begin{cor}[Tambara Reciprocity for Sums When $G$ is a Cyclic $p$-Group]\label{cor:TRsums} Let $G=C_{p^n}$ be a cyclic $p$-group, let $H=C_{p^k}$ be a subgroup of $G$, and let $\su$ be a $G$-Tambara functor. Then for all $a$ and $b$ in $\su(G/H)$, \begin{multline*}
\lefteqn{N^G_H(a+b) =} \\  N^G_H(a) + N^G_H(b)  + \sum_{H<C_{p^j}<G} tr^G_{C_{p^j}}\left(\sum_{f\in\mathcal I_{j}/G}N^{C_{p^j}}_H\left((\uab)_f\right)\right) +  tr^G_H(g_H(a,b)),
\end{multline*}
where 
$$
\mathcal I_j=\big(\Map(G/C_{p^j},\{a,b\})-\Map(G/C_{p^{j+1}},\{a,b\})\big),
$$
where
$$
(\uab)_f=\prod_{gC_{p^j}\in G/C_{p^j}} gf(gC_{p^j}),
$$
and where
$$
g_H(a,b)=\sum_{f\in\mathcal I_k/G} (\uab)_f = \sum_{f\in\mathcal I_k/G}\left(\prod_{gH \in G/H}g f(gH)\right).
$$
\end{cor}

We also have a formula for the norm of a transfer in a Tambara functor. As we did for the norm of a sum, we first provide a general formula that holds for all finite groups. We then give a more detailed formula for the norm of a transfer in a $G$-Tambara functor when $G$ is finite abelian.

\begin{thm}[Tambara Reciprocity for Transfers]\label{TRfortransfersFinCase} Let $G$ be a finite group with subgroups $H' < H$, and let $\su$ be a $G$-Tambara functor. 
\begin{enumerate}
\item  The following diagram of rings commutes.
$$\xymatrix{\su(G/H) \ar[d]_{\ngh} & &\su(G/H') \ar[ll]_{tr^H_{H'}}\ar[rr]^-{e^*}& & \su(G/H \times Map_H(G,H/H'))    \ar[d]^{\pi_{\star}} \\ \su(G/G) &&& &\su(Map_H(G,H/H')) \ar[llll]_-{h_*}}$$

\item For all $x$ in $\su(G/H')$, $$\ngh tr^H_{H'}(x) = T(-)$$ where $T(-)$ is an element in the subgroup of $\su(G/G)$ generated by all transfer terms. Moreover, it is universally determined by $G$.
\end{enumerate}
\end{thm}

\begin{proof} By Proposition \ref{DePProdIso}, the dependent product of the composition $G/H' \xrightarrow{p} G/H \xrightarrow{f} G/G$ is canonically isomorphic to $Map_H(G,H/H')$. It follows that the diagram below is an exponential diagram.
$$\xymatrix{G/H \ar[d]_f & &G/H' \ar[ll]_p& & G/H \times Map_H(G,H/H')   \ar[ll]_-{e} \ar[d]^{\pi} \\ G/G &&& &Map_H(G,H/H') \ar[llll]_-{h}}$$

Next we prove part 2 of the theorem. The $G$-set $Map_H(G,H/H')$ has no $G$-fixed points. Thus, there are no copies of $G/G$ in its decomposition into $G$-orbits. This means that in the commutative diagram of rings given in part 1, the map $h_*$ is a sum of various transfer maps. Hence, for all $x$ in $\su(G/H')$, $h_*\pi_{\star}e^*(x)$ is some element in the subgroup of $\su(G/G)$ generated by the transfer terms.
\end{proof}

\begin{thm}[Tambara Reciprocity for Transfers When $G$ is Finite Abelian]\label{normoftrdef} Let $G$ be a finite abelian group, and let $\su$ be a $G$-Tambara functor. For all subgroups $H'<H<G$ and all $x$ in $\su(G/H')$
\begin{eqnarray*}
N^G_Htr^H_{H'}(x) &=& tr^G_{H'}(f(x))+  \sum_{K \in \Omega} \left(\sum_{j=1}^{q_K}tr^G_K N^K_{H'}(\underline{x}_j)\right)
\end{eqnarray*}
where $\Omega$ is the set of all subgroups $K$ of $G$ such that $H'=H \cap K$, $f(x)$ is a polynomial in  some of the  $W_G(H')$-conjugates of $x$, and each $\underline{x}_j$ is a monomial in some of the $W_G(H')$-conjugates of $x$. Moreover, $f(x)$, $\underline{x}_j$, and the integers $q_K$ are universally determined by the group $G$.\end{thm}

\begin{proof}Since $G$ is abelian we can create a more complete picture of the composition $h_*\pi_{\star}e^*(x)$ given  in Theorem \ref{TRfortransfersFinCase}. In particular, we are now able to better describe the map $\pi_{\star}$.

First, as a $G$-set, $Map_H(G,H/H')$ is isomorphic to $$\coprod_{i=1}^r (G/H')_i \amalg \coprod_{K \in \Omega} \Big( \coprod_{j=1}^{q_K} (G/K)_j\Big).$$ We do not need to know the exact values for $r$ and each $q_K$, since it suffices that they are universally determined by $G$. So, $G/H \times Map_H(G,H/H')$ is isomorphic to $$\coprod_{i=1}^r \Big( \coprod_{|G/H|} G/H'\Big)_i \amalg \coprod_{K \in \Omega} \bigg[ \coprod_{j=1}^{q_K} \Big(\coprod_m G/H'\Big)_{j}\bigg]$$ where $m = \frac{|G/K||G/H|}{|G/H'|}$.  

Thus, $h_*\pi_{\star}e^*$ becomes the composition below where $S = \su(G/H')$. 
$$\xymatrix{S  \ar[d]^-{e^*} \\  
     \bigoplus_{i=1}^r\left(\bigoplus_{|G/H|}S\right) \oplus \bigoplus_{K \in \Omega} \left[\bigoplus_{j=1}^{q_K} \left(\bigoplus_m S\right)\right]  \ar[d]^-{\pi_{\star}} \\ 
  \bigoplus_{i=1}^r  S  \oplus \bigoplus_{K \in \Omega} \left[\bigoplus_{j=1}^{q_K} \su(G/K)\right] \ar[d]^{h_*} \\ \su(G/G)} $$

To describe the map $\pi_{\star}$ let  $$\pi_i\co \bigoplus_{|G/H|} S\to S$$ be the map that sends $(s_e, s_{g_1},\dots , s_{g_{|G/H|-1}})$ to a specific product $$g_{t_e}s_e g_{t_1}s_{g} \cdots g_{t_{|G/H|-1}}s_{g^{|G/H|-1}}$$ where $g_{t_{i}}s_{g_i}$ is a $W_G(H')$-conjugate  of $s_{g_i}$. While it is difficult to give further detail on this product of Weyl conjugates, it suffices that it is universally determined by the group $G$. Define $\pi_j\co \bigoplus_m S \to S$ analogously. Then $ \pi_{\star} = \bigoplus_{i=1}^r \pi_{i} \oplus \bigoplus_{j=1}^{q_K} (N^K_{H'}\circ \pi_j)$. 

The map $e^*$ is the diagonal map and $h_*$ is a composition of addition and transfer maps. Therefore, $$N^G_Htr^H_{H'}(x) = h_*\pi_{\star}e^*(x) = tr^G_{H'}(f(x)) + \sum_{K \in \Omega} \left(\sum_{j=1}^{q_K}tr^G_K N^K_{H'}(\underline{x}_j)\right)$$ where $f(x)$ is a polynomial in some of the $W_G(H')$-conjugates of $x$, and each $\underline{x}_j$ is a monomial in some of the $W_G(H')$-conjugates of $x$. 
\end{proof}

When $G$ is a cyclic $p$-group, since all subgroups are nested, $H' = H \cap K$ if and only if $K=H'$. Thus, in this case the Tambara reciprocity formula for transfers simplifies nicely.

\begin{cor}[Tambara Reciprocity for Transfers When $G$ is a Cyclic $p$-Group]\label{cor:TRtransfers} Let $G=C_{p^n}$ be a cyclic $p$-group with chosen generator $\g$, and let $H'<H$ be subgroups of $G$. Then for all $x$ in $\su(G/H')$,
\begin{eqnarray*}
N^G_Htr^H_{H'}(x) &=& tr^G_{H'}(f(x))
\end{eqnarray*}
where $f(x)$ is a polynomial in some of the Weyl conjugates of $x$. More specifically,  $$f(x) = \sum_{s=1}^r \prod_{i=0}^{|G/H|-1} \g^i \g^{m_{i,s}} x$$ where each $\g^{m_{i,s}}x$ is a $W_H(H')$-conjugate of $x$ and the integers $r$ and $m_{i,s}$ are universally determined by $G$. 
\end{cor} 

 
 \begin{fact}\label{monfacts}
The following fact regarding Tambara reciprocity  will allow us to create Tambara reciprocity-like relations in the Mackey functor $\ngh\M$ that we define in the next section. Since we only use this fact when $G$ is a cyclic $p$-group we restrict our attention to this case. It appears however that this fact holds for all finite abelian groups.

When $G$ is a cyclic $p$-group the monomials of $g_H(a,b)$ and $f(x)$ and the monomials $(\uab)_f$ given in Corollaries \ref{cor:TRsums} and \ref{cor:TRtransfers} do not contain repeated factors. If $\g$ generates $G$, then for every $\g^m$ the elements  $\g^m a$ and $\g^m b$ appear at most once in any monomial in the formula for the norm of a sum, and  it is impossible for both $\g^m a$ and $\g^m b$ to appear in the same monomial. Similarly, the element $\g^mx$ appears at most once in any monomial in the formula for the norm of a transfer. These facts follow from the formulas for the  maps $\pi_{\star}$  in the proofs of Theorems \ref{sumdef} and \ref{normoftrdef}.\end{fact}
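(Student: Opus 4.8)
The plan is to read off all of the stated combinatorial properties directly from the explicit descriptions of the maps $\pi_\star$ obtained in the proofs of Theorems \ref{sumdef} and \ref{normoftrdef}. The point is that every monomial named in the statement is literally a value of one of the component maps $(\g^t_H\cdot)$, $(\g^t_{K'}\cdot)$, or $f_{q\star}$ on the output of $e^*$, so no new computation is required: it suffices to observe that each such map outputs a product of conjugates $w\cdot(\text{variable})$ whose conjugating elements $w$ are pairwise distinct in the relevant Weyl group and each of whose factors carries exactly one of the input variables.

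First I would treat the norm-of-a-sum monomials. Recall that $e^*(a,b)$ produces a tuple each of whose entries is either $a$ or $b$, and that $(\g^t_H\cdot)$ sends $(s_e,s_\g,\dots,s_{\g^{|G/H|-1}})$ to $\prod_{i=0}^{|G/H|-1}\g^i s_{\g^i}$. Hence each monomial $(\UAB)^H_j$ is a product of factors $\g^i s_{\g^i}$ in which the exponent $i$ ranges over $0,1,\dots,|G/H|-1$ without repetition and each $s_{\g^i}\in\{a,b\}$. Since the conjugation action of $\g^i$ on $\su(G/H)$ factors through $W_G(H)=G/H$ of order $|G/H|$, these exponents index pairwise distinct Weyl-group elements. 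Consequently, for each $\g^m$ there is at most one factor whose conjugating element is $\g^m$, and that single factor is either $\g^m a$ or $\g^m b$ but not both. This yields simultaneously the absence of repeated factors, the at-most-once appearance of each $\g^m a$ and each $\g^m b$, and the impossibility of $\g^m a$ and $\g^m b$ occurring together. The same argument applies verbatim to the monomials $(\uab)^{K'}_k$, which are the values of the analogous map $(\g^t_{K'}\cdot)$ indexed over a full set of $W_G(K')$-representatives $0,\dots,|G/K'|-1$.

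For the norm-of-a-transfer monomials I would argue identically using the formula $f_{q\star}(s_e,\dots,s_{\g^{|G/H|-1}})=\g^{t_1}s_e\cdots\g^{t_{|G/H|}}s_{\g^{|G/H|-1}}$ together with the fact that here $e^*$ is the diagonal, so $f(x)$ is built from monomials $\prod_{\ell=1}^{|G/H|}\g^{t_\ell}x$. The relevant point is that the $\g^{t_\ell}$ were defined as the distinct cosets in $G/H'$ that $f_q$ sends to the identity of $H/H'$; being distinct elements of $G/H'=W_G(H')$, the factors $\g^{t_\ell}x$ involve pairwise distinct Weyl conjugates of $x$, so each $\g^m x$ appears at most once.

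The only step requiring genuine care — and the one I would flag as the main obstacle — is the passage from ``distinct exponents'' (or ``distinct cosets'') to ``distinct Weyl-group elements,'' since a priori different powers of $\g$ in $G$ can induce the same conjugation on $\su(G/H)$. For the sum this is clean because the exponents fill a complete set of residues modulo $|W_G(H)|=|G/H|$ (respectively $|W_G(K')|$); for the transfer it rests on the count that $f_q$ sends exactly $|G/H|$ of the $|G/H'|$ cosets to the identity of $H/H'$ and that these are by construction distinct cosets of $H'$. Once this distinctness is established, the three stated properties are immediate consequences of the shape of the monomials, with no further calculation.
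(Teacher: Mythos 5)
Your proposal is correct and takes essentially the same approach as the paper: the paper's entire proof is the closing one-line assertion that the properties follow from the explicit formulas for $\pi_{\star}$ in the proofs of Theorems \ref{sumdef} and \ref{normoftrdef}, and your argument simply fills in that reading — each monomial is a value of $(\g^t_H\cdot)$, $(\g^t_{K'}\cdot)$, or $f_{q\star}$, whose factors carry pairwise distinct conjugating elements and exactly one input variable apiece. The ``distinct exponents versus distinct Weyl-group elements'' point you flag is handled correctly (complete residues mod $|G/H|$, respectively distinct cosets in $G/H'$) and is a legitimate detail the paper leaves implicit.
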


Let $G$ be a finite group and let $\og$ be the orbit category of $G$. Motivated by an analogous definition of Mackey functors from \cite{webb}, we also define $G$-Tambara functors as follows.

\begin{defn}\label{axiomatictfdef}
Let $G$ be a finite group. A $G$-Tambara functor $\su$ consists of a collection of commutative rings $$\{\su(G/H): G/H \in \og\}$$   along with the following maps for all orbits $G/H$ and $G/K$ such that $H<K$ and all $g$ in $G$: 
\begin{itemize}\vspace{-5pt}
\item the {\emph{restriction}} map  $res^K_H\co  \su(G/K)\to \su(G/H)$, \vspace{-5pt}
\item the {\emph{transfer}} map $tr^K_H\co \su(G/H) \to \su(G/K)$,  \vspace{-5pt}
\item the {\emph{norm}} map $N^K_H\co \su(G/H) \to \su(G/K)$, and \vspace{-5pt}
\item the \textit{conjugation} map $c_g \co \su(G/H) \to \su(G/gHg^{-1})$.
\end{itemize}
These rings and maps satisfy the following conditions.
\begin{enumerate}
\item All restriction maps are ring homomorphisms, all transfer maps are homomorphisms of additive monoids, and all norm maps are homomorphisms of multiplicative monoids.
\item \label{tftrans} (Transitivity) For all $H'<H<K$ and all $g$ and $h$ in $G$ \vspace{-3pt}
\begin{eqnarray*}
res^K_{H'} &=& res^H_{H'}res^K_H\\ 
tr^K_{H'} &=& tr^K_Htr^H_{H'} \\
N^K_{H'} &=& N^K_HN^H_{H'} \\
c_gc_h &=& c_{gh}.
\end{eqnarray*}
\item (Frobenius Reciprocity) If $H<K$, then for all $x$ in $\su(G/H)$ and $y$ in $\su(G/K)$  $$ytr^K_H(x) = tr^K_H(res^K_H(y)x).$$
\item If $H<K$ and $g \in G$, then 
\begin{eqnarray*}
c_gres^K_H &=& res^{gKg^{-1}}_{gHg^{-1}}c_g \\
c_gtr^{K}_{H} &=& tr^{gKg^{-1}}_{gHg^{-1}}c_g \\
c_gN^{K}_{H} &=& N^{gKg^{-1}}_{gHg^{-1}}c_g.
\end{eqnarray*}
\item For subgroups $H'$ and $H$ of $K$, let $[H\backslash K/H']$ denote a set of representatives in $G$ for the double cosets $H\backslash K/H'$. Then
\begin{eqnarray*}
res^K_Htr^K_{H'}&=& \sum_{g \in [H\backslash K/H']} tr^H_{H \cap gH'g^{-1}} c_g res^{H'}_{gHg^{-1}\cap H'} \\
res^K_HN^K_{H'} &=& \prod_{g \in [H\backslash K/H']} N^H_{H \cap gH'g^{-1}} c_g res^{H'}_{gHg^{-1}\cap H'}.
\end{eqnarray*}
\item  (Tambara Reciprocity) $\su$ satisfies Theorems \ref{TRforSumsFinCase} and \ref{TRfortransfersFinCase}.
\end{enumerate}\end{defn}

If $G$ is a finite group, and  $\su$ is a $G$-Tambara functor as defined in Definition \ref{origtfdef}, showing that $\su$ satisfies Definition \ref{axiomatictfdef} is straightforward.

\begin{rmk}[\textbf{Weyl Actions}]\label{resN} Every $\su(G/H)$ is equipped with actions of the Weyl groups $W_K(H)$ for all subgroups $K$ such that $H<K\le G$. Here we state a few properties of these actions that we subsequently use to define the Mackey functor $\ngh\M$. By Property 4 of Definition \ref{axiomatictfdef}, for all $g$ in $W_K(H)$, all $x$ in  $\su(G/H)$, and all $y$ in $\su(G/K)$, $g res^K_H(x) = res^K_H(x)$, $tr^K_H(gx) = tr^K_H(x)$ and $N^K_H(gx) = N^K_H(x)$. Moreover, when $G$ is abelian, by Property 5, $$res^K_Htr^K_H(x) = \sum_{g \in W_K(H)} g x \hspace{1cm} \text{ and } \hspace{1cm} res^K_HN^K_H(x) = \prod_{g \in W_K(H)} g x.$$

\end{rmk}

We collect the properties of Definition \ref{axiomatictfdef} into a lattice-like diagram. For example, a $C_2$-Tambara functor is pictured in Figure \ref{tfdiag}. We do not draw the Weyl action.
\begin{figure}[ht]
\begin{displaymath}
\xymatrix@1{ \su(C_{2}/C_{2})  \ar@/_2pc/[d]_{res^{C_2}_e}   \\ \text{ } \su(C_{2}/e)\ar@/_2pc/[u]_{tr^{C_2}_e} \ar[u]^{N^{C_2}_e} &\text{ } }
\end{displaymath}
\caption{$\su$ is a $C_2$-Tambara Functor}
\label{tfdiag}
\end{figure}


\begin{ex}\label{ex:TRFormulas} We can find explicit  Tambara reciprocity formulas. For example,
let $\su$ be a $C_4$-Tambara functor, let $a$ and $b$ be elements in $\su(C_4/e)$, and let $\g$ generate $C_4$. Then  $$N^{C_4}_e(a+b)=  N^{C_4}_e(a) + N^{C_4}_e(b) + tr^{C_4}_{C_2}\left(N^{C_2}_e((\underline{ab})^{C_2}_1)\right) + tr^{C_4}_e(g_e(a,b))$$
where $(\underline{ab})^{C_2}_1 = a \g b$ and $$ g_e(a,b) = a\g a\g^2  a\g^3  b +  b \g  b\g^2  b\g^3  a +a\g  b\g^2  b\g^3  a.$$   
Further, for $x$ in $\su(C_4/e)$, $N^{C_4}_{C_2}tr^{C_2}_e(x) = tr^{C_4}_e(f(x))$ where $f(x) = x\g x$.
\end{ex}


\section{Constructing the Norm Functors}

From here on, let $G$ be a cyclic group of order \(p^n\) with chosen generator $\g$. In this section,  for a subgroup $H=C_{p^k}$ of $G$ and $H$-Mackey functor $\M$, we build a $G$-Mackey functor $\ngh\M$ that we will use to define the norm functor $\ngh\co \mackh \to \mackg$. Our Mackey functor norm will be built inductively, using the linear order of the subgroups of a cyclic \(p\)-group. This allows for a clean, conceptual description of the norm.


Our definition of $\ngh\M$ is motivated by the explicit description of the symmetric monoidal product in the category of $G$-Mackey functors. This product is called the {\emph{box product}} $\bx$. The category theoretic definition of the box product can be found in \cite{LewisGF} or \cite{Ventura}, and while this definition is central to the theory of Mackey functors, it is difficult to actually compute the box product of two Mackey functors. 

Lewis gives an explicit construction of the box product for $C_p$-Mackey functors, and this description immediately generalizes to other cyclic $p$-groups. Despite its constraint to $C_p$-Mackey functors, this definition allows us to visualize the box product of two Mackey functors. In a similar vain, even though we only define $\ngh\M$ when $G$ is a cyclic $p$-group, our construction provides insight into the norm functors that is otherwise difficult to see.

\begin{defn}\label{bxproddef}  [\cite{Shulman}, \cite{LewisLT}] Given $C_p$-Mackey functors $\M$ and $\lu$ we define their box product $\M \bx \lu$ by the diagram in Figure \ref{bxproddiag}.

\begin{figure}[h]\label{bxproddiag} 
\begin{displaymath}
\xymatrix@1{ \big[\M(C_p/C_p) \otimes \lu(C_p/C_p) \oplus \overbrace{(\M(C_p/e) \otimes \lu(C_p/e))/_{W_{C_p}(e)}}^{Im(tr^{C_p}_e)}\big]/_{FR} \ar@/_2pc/[d]_{res^{C_p}_e}   \\ \text{ } \M(C_p/e) \otimes \lu(C_p/e) \ar@/_2pc/[u]_{tr^{C_p}_e}  &\text{ } }
\end{displaymath}
\caption{The Box Product of $C_p$-Mackey Functors}
\label{bxproddiag}
\end{figure}

The transfer map is the quotient map onto the second summand. The restriction map is induced from $res^{C_p}_e \otimes res^{C_p}_e$ on the first summand and by the trace of the Weyl action on the second summand. The Weyl action on $\M(C_p/e) \otimes \lu(C_p/e)$ is the diagonal action. Finally, $FR$ is the {\emph{Frobenius reciprocity}} submodule and is generated by all elements of the form $$m' \otimes tr^{C_p}_e(l) - tr^{C_p}_e(res^{C_p}_e(m') \otimes l)$$ and $$tr^{C_p}_e(m) \otimes l' - tr^{C_p}_e(m \otimes res^{C_p}_e(l'))$$ for all $m'$  in $\M(C_p/C_p)$, $l'$ in $\lu(C_p/C_p)$, $m$ in $\M(C_p/e)$, and $l$ in $\lu(C_p/e)$. \end{defn}

If $G$ is a cyclic $p$-group $C_{p^n}$ we can extend the above definition to the box product of $G$-Mackey functors as follows. Because all subgroups of $G$ are nested, for any $G$-Mackey functor and any subgroups $H<C_{p^j} \le G$, $tr^{C_{p^j}}_H = tr^{C_{p^j}}_{C_{p^{j-1}}}tr^{C_{p^{j-1}}}_H$. Therefore, if $\M$ and $\lu$ are $G$-Mackey functors, we can inductively build $\M \bx \lu$ so that 
\begin{eqnarray*}
\lefteqn{(\M \bx \lu)(G/C_{p^j}) =} \\ && \big[\M(G/C_{p^j}) \otimes \lu(G/C_{p^j}) \oplus \underbrace{(\M \bx \lu)(G/C_{p^{j-1}})/_{W_{C_{p^j}}(C_{p^{j-1}})}}_{Im(tr_{j-1}^{j})}  \big]/_{FR}.
\nonumber
\end{eqnarray*} The restriction and transfer maps are identical to those given in Definition \ref{bxproddef}.

The above definition naturally extends to an $m$-fold box product for any positive integer $m$. It also gives us another way to view the universal property of the box product. This is equivalent to the usual description of the universal property; we find this a helpful reformulation. The proof is immediate from Lewis' description of the box product.

\begin{prop}\label{boxprodmaps}\cite{LewisGF,Shulman}
Let $\M$, $\lu$, and $\underline{N}$ be $G$-Mackey functors. Maps $\M \bx \lu \to \underline{N}$ are in natural bijective correspondence with the following data:
A collection of Weyl equivariant maps
\[
f_j\colon \M(G/C_{p^j})\otimes \lu(G/C_{p^j})\rightarrow \underline{N}(G/C_{p^j})
\]
for all $0\le j \le n$ that satisfy the following compatibility conditions.
\begin{enumerate}
\item For all $j$, we have
\[
res_{j-1}^{j}\circ f_j=f_{j-1}\circ (res_{j-1}^{j} \otimes res_{j-1}^j),
\]
\item for all $j$, we have
\[
f_j\circ (tr_{j-1}^{j}\otimes Id)=tr_{j-1}^{j}\circ f_{j-1}\circ (Id\otimes res_{j-1}^{j}),
\]
\item and for all $j$, we have 
\[
f_j\circ (Id\otimes tr_{j-1}^{j})=tr_{j-1}^{j}\circ f_{j-1}\circ (res_{j-1}^j\otimes Id).
\]
\end{enumerate}
\end{prop}

Alternatively, we can define $\M \bx \lu$ as a quotient of Mackey functors. If we let $\textsc{tens}\M \lu$ be the Mackey functor with 
$$
(\textsc{tens} \M\lu)(G/K) = \left[\M(G/K) \otimes \lu(G/K)\right] \oplus Im(tr)
$$ for all subgroups $K$ of $G$,
then the Frobenius reciprocity submodules form a subMackey functor $\textsc{fr}\M\lu$ of $\textsc{tens}\M\lu$. We can then define $\M \bx \lu$ by $$\M \bx \lu := \textsc{tens}\M\lu/\textsc{fr}\M\lu.$$

We play a similar game to define $\ngh\M$. We start by building a ``free" $G$-Mackey functor from an $H$-Mackey functor $\M$. Since we want $\ngh\M$ to look like a Tambara functor we will quotient the ``free" Mackey functor by a subMackey functor (analogous to $\textsc{fr}\M\lu$) that creates Tambara reciprocity-like relations. 

\subsection{The ``free'' $G$-Mackey functor}

\begin{defn}[The ``free" $G$-Mackey functor] Let $H=C_{p^k}$ be a subgroup of $G$ and let $\M$ be an $H$-Mackey functor.  Define the $G$-Mackey functor $\free \M$ as follows.

  For all subgroups $H'$  of $H$  define $$(\free \M)(G/H'):= \M^{\bx |G/H|}(H/H'),$$  and if $H''<H'\le H$ then  $res^{H'}_{H''}$ and $tr^{H'}_{H''}$ are the box product restriction and transfer maps given in Definition \ref{bxproddef}. The Weyl action is via tensor induction.
  
Now let $K=C_{p^j}$ be a subgroup such that $H<K\le G$. Define $(\free\M)(G/K)$ inductively by 
$$
(\free \M)(G/K) := \Z\{\Map\big(G/K,\M(H/H)\big)\} \oplus Im(tr^{j}_{j-1}),
$$ 
where 
$$
Im(tr^{j}_{j-1})=\left((\free \M)(G/C_{p^{j-1}})\right)/_{W_{K}(C_{p^{j-1}})}.
$$
The transfer map $tr^{j}_{j-1}$ is the canonical quotient map onto that summand. The Weyl action is via the action on $\Map(G/K,\M(H/H))$ by precomposition and the induced action on the other summand. 

The restriction on $Im(tr^{j}_{j-1})$ is defined by the universal formula
$$
res^j_{j-1}tr^j_{j-1}(x) = \sum_{g\in W_{C_{p^j}}(C_{p^{j-1}})} g x
$$
in a Mackey functor. On the ``free'' summand, when $j>k+1$ we take
$$
res^j_{j-1}\colon \Map\big(G/K,\M(H/H)\big)\to \Map\big(G/C_{p^{j-1}},\M(H/H)\big)
$$
to be the map induced by the quotient map
$$
G/C_{p^{j-1}}\to G/K.
$$
Finally, the restriction map $res^{k+1}_k$ on the free summand is given by the composite 
\begin{multline*}
\Z\{\Map\big(G/C_{p^{k+1}},\M(H/H)\big)\}\to 
\Z\{\Map\big(G/H,\M(H/H)\big)\}\to \\ 
\M(H/H)^{\otimes |G/H|}\to \M^{\Box |G/H|}(H/H),
\end{multline*}
where the latter maps are the canonical maps. 
\end{defn}

\begin{rmk}\label{DefofN} There is a map $N \co \M(H/H) \to (\free\M)(G/G)$ defined by the composition $$\M(H/H) \to \Z\{\M(H/H)\} \to (\free\M)(G/G)$$ that sends an element in $\M(H/H)$ to the corresponding generator in the free summand in $(\free\M)(G/G)$. This map $N$ will be the universal norm map, and in Section 5, we use it to define the internal norm maps of a Tambara functor. The restriction map on this summand (and the other factors) is defined to mirror the relation in a Tambara functor
$$
res^{j}_{j-1}N^{j}_{j-1}(x) = \prod_{g\in W_{C_{p^j}}(C_{p^{j-1}})} g x.
$$
Hence, with this idea in mind we will use the notation $N(-)$ to denote generators  of the free summand.
\end{rmk}

In order to make some computations more explicit, we need to introduce some notation.
Since whenever $H'<H$, $(\free\M)(G/H') = \M^{\bx |G/H|}(H/H')$, we index $(\free\M)(G/H')$ by elements of $G/H$.  Moreover, 
$$
\M^{\bx |G/H|}(H/H') = \left(  \M(H/H')^{\otimes |G/H|} \oplus Im(tr)\right)/_{FR},
$$ 
and we denote a simple tensor in the $ \M(H/H')^{\otimes |G/H|}$ summand of this module by 
$$
\bfm^{\otimes|G/H|} = m_e \otimes m_{\g} \otimes \cdots \otimes m_{\g^{|G/H|-1}}.
$$

Similarly, $\Map\big(G/K,\M(H/H)\big)$ in the free summand of $(\free\M)(G/K)$ is  the product over $G/K$ and can be indexed by elements of $G/K$. We denote an element in $\Map\big(G/K,\M(H/H)\big)$ by 
\begin{equation*}\label{notation} 
\bfm^{\times |G/K|} = m_e\times  m_{\g} \times \cdots \times m_{\g^{|G/K|-1}}.
\end{equation*}  
Let  $N(\bfm^{\times|G/K|})$ denote the corresponding generator of  the free summand $\Z\{\Map\big(G/K,\M(H/H)\big)\}$ in $(\free\M)(G/K)$.

This gives us a universal property for $\free\M$ that follows immediately from the universal property of the free abelian group functor. 

\begin{prop}
A map $\free\M\to\M'$ is given by two pieces of data:
\begin{enumerate}
\item A map of Mackey functors $\M^{\Box |G/H|}\to i_H^\ast \M'$, where the source is the analogue of tensor induction in Mackey functors, and for both sides, the Weyl action by $W_H(H')$ comes equipped with an extension to an action of $W_G(H')$,
\item for $K=C_{p^j}$ such that $H<K\le G$, $G/K$-equivariant set maps
$$
f_j\colon\Map(G/K,\M(H/H))\to \M'(G/K)
$$

that satisfy the following properties.
\begin{enumerate}
\item For each $k<j-1<n$, we have
$$
res_{j-1}^j\circ f_j=f_{j-1}\circ res_{j-1}^{j}
$$
\item when $j-1=k$ we have
$$
\xymatrix{
{\Map\big(G/C_{p^{k+1}},\M(H/H)\big)}
	\ar[r]^(.625){f_{k+1}}
    \ar[d]
    &
{\M'(G/C_{p^{k+1}})}
	\ar[d]
    \\
{\M^{\Box |G/H|}(H/H)}
	\ar[r]
    &
{\M'(H/H).}
}
$$
\end{enumerate}
\end{enumerate}
\end{prop}

\subsection{The Tambara Reciprocity SubMackey Functor $\tr\M$} \label{fun}

As discussed in Remark \ref{DefofN}, in Section 5 we will use $\ngh\M$ to define the internal norm maps of a Tambara functor, so $\ngh\M$ must reflect the Tambara reciprocity property of Definition \ref{axiomatictfdef}. Hence, we will define $\ngh\M$ by taking the quotient of $\free\M$ with a subMackey functor to produce relations that mimic the Tambara reciprocity relations of a Tambara functor. 
Thus, we need to quotient $\free \M$ by a subMackey functor in order to identify the analogues of ``the norm of a sum" and ``the norm of a transfer" in the free summand with appropriate elements in the transfer summand. We will call this subMackey functor the \textit{Tambara reciprocity subMackey functor} $\tr\M$.

We will define $\tr\M$ inductively. We begin with a lemma identifying $\free\M$.
\begin{lemma}\label{freegiso}
For any $K=C_{p^j}$ such that $H<K\le G$ we have a natural isomorphism
$$
i_{K}^\ast \free \M\cong \big(\free[{K}]\M\big)^{\Box |G/K|}.
$$
\end{lemma}

\begin{proof} We will use induction on the order of the group. First, by definition of $\free\M$, for $H' \le H$, $(i_{K}^*\free\M)(K/H')$ is isomorphic to $(\free[{K}]\M)^{\bx |G/K|}(K/H')$. Then by induction it remains to show that $(i_{K}^*\free\M)(K/K)$ is isomorphic to $(\free[{K}]\M)^{\bx |G/K|}(K/K)$. First, we have $$(i_{K}^*\free\M)(K/K) = \Z\{Map(G/K, \M(H/H))\} \oplus Im(tr^{j}_{j-1}),$$ and 
\begin{multline*}
(\free[{K}])^{\bx |G/K|}(K/K) = \\ \left(\left(\Z\{Map(K/K, \M(H/H))\} \oplus Im(tr^{j}_{j-1})\right)^{\otimes |G/K|} \oplus Im^{\bx}(tr^{j}_{j-1}) \right)/_{FR}.
\end{multline*}

Then by Frobenius reciprocity we identify all transfer terms of $$\left(\Z\{Map(K/K,\M(H/H))\} \oplus Im(tr^{j}_{j-1})\right)^{\otimes |G/K|}$$ with elements of $ Im^{\bx}(tr^{j}_{j-1})$. Thus, $(\free[{K}])^{\bx |G/K|}(K/K)$ is isomorphic to $$\Z\{Map(K/K,\M(H/H))\}^{\otimes |G/K|} \oplus Im^{\bx}(tr^{j}_{j-1}).$$ Then $\Z\{Map(K/K, \M(H/H))\}^{\otimes |G/K|}$ is isomorphic to $\Z\{Map(G/K,\M(H/H))\}$, and by induction  $Im^{\bx}(tr^{j}_{j-1})$ is isomorphic to the $Im(tr^{j}_{j-1})$ summand of $(i_{K}^*\free\M)(K/K)$. Therefore, $(\free[{K}])^{\bx |G/K|}(K/K)$ is isomorphic to \newline $(i_{K}^*\free\M)(K/K)$.
\end{proof}

Our definition of $\tr\M$ uses a similar inductive description: we build $\tr\M$ so that for $H<K\le G$ we have an isomorphism
$$
i_{K}^\ast\free\M/\tr\M\cong \big(\free[K]\M/\tr[K]\M\big)^{\Box |G/K|}.
$$

We start by defining the elements that generate the submodule $(\tr\M)(G/G)$ of $(\free\M)(G/G)$. These generators are of two types, one of which will create relations like Tambara reciprocity for sums in $\ngh\M$ and the other will create relations like Tambara reciprocity for transfers. 

To develop the first type of generators, recall by Corollary \ref{cor:TRsums} that when $G$ is a cyclic $p$-group, the Tambara reciprocity formula for sums is given by
\begin{multline*}
\lefteqn{N^G_H(a+b) =} \\  N^G_H(a) + N^G_H(b)  + \sum_{H<C_{p^j}<G} tr^G_{C_{p^j}}\left(\sum_{f\in\mathcal I_{j}/G}N^{C_{p^j}}_H\left((\uab)_f\right)\right) +  tr^G_H(g_H(a,b)).
\end{multline*}

Thus, we will develop elements in $(\free\M)(G/G)$ that are analogous to these formulas. We have grouped the various terms so that they match the summands which show up in \(\free\M\). Thus, for any $a$ and $b$ in $\M(H/H)$ we need to start with the element 
$$
N(a+b)-N(a)-N(b)
$$ 
in the free summand of $(\free\M)(G/G)$ and subtract elements in the transfer summand that are analogous to each $tr^G_{C_{p^j}}(N^{C_{p^j}}_{H}((\uab)_f))$ (which arise from the transfers of the other free summands) and to $tr^G_H(g_H(a,b))$ (which comes from the box powers of $\M$). 

We will first construct an element analogous to $(\uab)_f$. 

\begin{defn} 
Let $f\in \mathcal I_j/G$, and let $K=C_{p^j}$. The map $\{a,b\}\to\M(H/H)$ then defines an element
$$
(\ba\bb)_f=\prod_{gK\in G/K} f(gK)=f(eK)\times\dots\times f(\gamma^{|G/K|-1}K)\in \Map\big(G/K,\M(H/H)\big).
$$
Let $N((\ba\bb)_f)$ be the corresponding element in the free summand of $(\free\M)(G/K)$.
\end{defn}

By Fact \ref{monfacts} we can define an element in $(\free\M)(G/H)$ that is analogous to $g_H(a,b)$. 

\begin{defn}\label{defgab} Define $g_H(\ba,\bb)$ in $\M(H/H)^{\otimes |G/H|}$ by
$$
g_H(\ba,\bb) =  \sum_{f\in\mathcal I_k/G}\left(\bigotimes_{gH \in G/H}f(gH)\right) = \sum_{f\in\mathcal I_k/G}\left( f(eH) \otimes \cdots \otimes f(\g^{|G/H|-1}H)\right).
$$ 
\end{defn}

Thus, one type of generator of $(\tr\M)(G/G)$ is of the form

\begin{multline*}
N(a+b) - N(a) - N(b)  \\
- \sum_{H<C_{p^j}<G} tr^G_{C_{p^j}}\left(\sum_{f\in\mathcal I_{j}/G}N\left((\ba \bb)_f\right)\right) -  tr^G_H(g_H(\ba,\bb)).
\end{multline*}


Hence, when we quotient by $\tr\M$ to define $\ngh\M$ we have the following relation in $(\ngh\M)(G/G)$.

\begin{multline*}\label{TRsum}
N(a+b)  = \\    N(a) +  N(b) + \sum_{H<C_{p^j}<G} tr^G_{C_{p^j}}\left(\sum_{f\in\mathcal I_{j}/G}N\left((\ba\bb)_f\right)\right) +  tr^G_H(g_H(\ba,\bb))
\end{multline*}

Similarly, we want to mimic the Tambara reciprocity formula for transfers given in Corollary \ref{cor:TRtransfers};
\begin{eqnarray*}
N^G_Htr^H_{H'}(x) &=& tr^G_{H'}(f(x))
\end{eqnarray*}
where  $$f(x) = \sum_{s=1}^r \prod_{i=0}^{|G/H|-1} \g^i \g^{m_{i,s}} x.$$ 

\begin{defn}\label{deffx} Define $f(\bmx)$ in $\M(H/H')^{\otimes |G/H|}$  by  $$ f(\bmx)=\sum_{s=1}^r \g^{m_{0,s}}x \otimes \g^{m_{1,s}}x\otimes \cdots \otimes \g^{m_{|G/H|-1,s}}x.$$ So, $tr^G_{H'}\left(f(\bmx)\right)$ is in the transfer summand of $(\free\M)(G/G)$.
\end{defn}

Hence, the other type of generator of $(\tr\M)(G/G)$ is of the form 
$$N(tr^H_{H'}(x)) - tr^G_{H'}(f(\bmx)),$$ so that in $(\ngh\M)(G/G)$ we have the relation $$N( tr^H_{H'}(x)) = tr^G_{H'}(f(\bmx)),$$

We now define the Tambara reciprocity sub-Mackey functor $\tr\M$. For now it is an abuse of terminology to call $\tr\M$ a Mackey functor. However, we prove that it is in fact a Mackey functor in Theorem~\ref{TRisaMF}.
\begin{defn}[The Tambara Reciprocity Sub-Mackey Functor, $\tr\M$]\label{TRsubmods}
We define $\tr\M$ as follows. 

For $H' \le H$ define $(\tr\M)(G/H')=0$.
 
If $H<K<G$ then we inductively define $(\tr\M)(G/K)$ by
$$ 
(\tr\M)(G/K) =  \sum_{i=1}^{|G/K|} \left((\free[K]\M)^{\bx i-1} \bx \tr[K]\M \bx (\free[K]\M)^{\bx |G/K|-i}\right)(K/K)
$$
 
If $K=G$, 
then let $\tr[G]\M(G/G)$ be the subgroup generated by 
\begin{enumerate}
\item the ``addition relations'':
\begin{multline*}
N(a+b) - N(a) - N(b)  \\
- \sum_{H<C_{p^j}<G} tr^G_{C_{p^j}}\left(\sum_{f\in\mathcal I_{j}/G}N\left((\ba\bb)_f\right)\right) -  tr^G_H(g_H(\ba,\bb))
\end{multline*}
for all $a,b\in\M(H/H)$, 
\item the ``transfer relations'':
$$
N(tr^H_{H'}(x)) - tr^G_{H'}(f(\bmx))
$$
for all $x\in\M(H/H')$, and 
\item the transfer from smaller subgroups:
$$
Im\left(tr^G_{G'}((\tr\M)(G/G'))\right),
$$
where $G'$ is the maximal subgroup of $G$.
\end{enumerate}
\end{defn}

\begin{remark}
The values of $\tr[G]\M(G/K)$ with $K$ a proper subgroup are so that we have a copy of the Tambara reciprocity relations for each box factors in $i_K^\ast\free[G]\M$.
\end{remark}

We now show that $\tr\M$ is a Mackey functor. 

\begin{thm}\label{TRisaMF} The collection of submodules $\tr\M$ given in Definition \ref{TRsubmods} forms a subMackey functor of $\free\M$.
\end{thm}

\begin{proof}
We will use induction on the order of the group. The base case holds since for $H'\le H$, $(\tr\M)(G/H')$ is zero. Assume for induction that for all $K$ such that $H<K<G$, $\tr[K]\M$ is a Mackey functor.  Since each $\textsc{tr}^K\M$ is a Mackey functor for $H<K<G$, it follows that $\tr\M$ is well-defined. Further, by definition, $\tr\M$ is closed under the transfer maps and under the Weyl action, since this action is just the permutation action. Thus, it remains to show that $\tr\M$ is closed under the restriction maps. In particular, let $G'$ be the maximal subgroup of $G$. We will show that $res^G_{G'}((\tr\M)(G/G))$ is contained in $(\tr\M)(G/G')$.

From the formula 
$$
res_{G'}^G tr_{G'}^G(x)=\sum_{gG'\in G/G'} gx,
$$
we know that the restriction of the elements in the image of the transfer lands in the desired submodule. We need only check the ``addition'' generators and the ``transfer'' generators. These are very similar, combinatorial arguments, the main difficulty of which is in good, consistent notation. We spell the argument out carefully for the addition generators; the transfer case is similar enough that including it is unenlightening.

We now begin considering the ``addition relations'' 
\begin{multline*}
N(a+b) - N(a) - N(b)  \\
- \sum_{H<C_{p^j}<G} tr^G_{C_{p^j}}\left(\sum_{f\in\mathcal I_{j}/G}N\left((\ba \bb)_f\right)\right) -  tr^G_H(g_H(\ba,\bb)).
\end{multline*}
By definition, 
\begin{multline*}
\lefteqn{N(a) +  N(b) + \sum_{H<C_{p^j}<G} tr^G_{C_{p^j}}\left(\sum_{f\in\mathcal I_{j}/G} N\left((\ba \bb)_f\right)\right) 
+ tr^G_H\left(g_H(\ba,\bb)\right)}\\
= \sum_{f\in Map(G/H,\{a, b\})/_{G}} tr^G_{K_f}\left(\bigotimes_{i=0}^{|G/K_f|-1} f(\g^{i}K_f)\right)
\end{multline*} 
where each $K_f$ is the stabilizer of $f$. Thus, we need to show that 
\begin{eqnarray*}
res^G_{G'}(N(a+b))&=&res^G_{G'}\left(\sum_{f\in Map(G/H,\{a, b\})/_{G}} tr^G_{K_f}\left(\bigotimes_{i=0}^{|G/K_f|-1} f(\g^{i}K_f)\right)\right) \\
&=&\sum_{f\in Map(G/H,\{a, b\})/_{G'}}tr^{G'}_{K_f}\left(\bigotimes_{i=0}^{|G/K_f|-1} f(\g^{i}K_f)\right).
\end{eqnarray*}

By definition of the restriction in \(\free\M\), we have 
\[
res^G_{G'}(N(a+b)) = (N(a+b))^{\otimes |G/G'|},
\]
and by our induction hypothesis, in $(\tr\M)(G/G')$ we have 
\begin{eqnarray*}N\left(a+b\right)  &=&   N(a) +  N(b) + \sum_{H<C_{p^j}<G'} tr^{G'}_{C_{p^j}}\left(\sum_{f \in \mathcal{I}_j/G'} N\left((\ba \bb)_f\right)\right) 
+ tr^{G'}_H\left(g_H(\ba,\bb)\right) \\
&=& \sum_{f\in Map(G'/H,\{a, b\})/_{G'}} tr^{G'}_{K_f}\left(\bigotimes_{i=0}^{|G'/K_f|-1} f(\g^{ip}K_f)\right).
\end{eqnarray*}

Combining these, we have   
\begin{eqnarray*}\label{resN(a+b)}res^G_{G'}(N(a+b)) &=& \left(\sum_{f\in Map(G'/H,\{a, b\})/_{G'}} tr^{G'}_{K_f}\left(\bigotimes_{i=0}^{|G'/K_f|-1} f(\g^{ip}K_f)\right)\right)^{\otimes |G/G'|}.
\end{eqnarray*}
Therefore, we need to show the following equality.
\begin{multline}\label{eqn1}
    \sum_{f\in Map(G/H,\{a, b\})/_{G'}}tr^{G'}_{K_f}\left(\bigotimes_{i=0}^{|G/K_f|-1} f(\g^{i}K_f)\right)\\
    = \left(\sum_{f\in Map(G'/H,\{a, b\})/_{G'}} tr^{G'}_{K_f}\left(\bigotimes_{i=0}^{|G'/K_f|-1} f(\g^{ip}K_f)\right)\right)^{\otimes |G/G'|}
\end{multline}
We pause here to draw the reader's attention to the apparent asymmetry in the exponents of \(\gamma\) on the two sides of the equation. This is because since  \(\gamma\) is a chosen generator of \(G\), the element \(\gamma^p\) is then a chosen generator for \(G'\). The left-hand side is the restriction of a formula from \(G\), and hence uses \(\gamma\); the right-hand side is the corresponding formula for \(G'\) and hence uses its generator \(\gamma^p\).

Here the description in terms of the box product is most useful. The tensor product on the right-hand side of Equation~\ref{eqn1} distributes over the sum via the usual formula:

\begin{multline}\label{eqn:fhats}
\left(\sum_{f\in Map(G'/H,\{a, b\})/_{G'}} tr^{G'}_{K_f}\left(\bigotimes_{i=0}^{|G'/K_f|-1} f(\g^{ip}K_f)\right)\right)^{\otimes |G/G'|}= \\
\sum_{\vec{f}\in (Map(G'/H,\{a,b\})/{G'})^{\times p}} \bigotimes_{j=0}^{p-1} 
\left(tr_{K_{f_j}}^{G'}\left(\bigotimes_{i=0}^{|G'/K_{f_j}|-1} f_j(\gamma^{ip}K_{f_j})\right) \right),
\end{multline} 
where \(f_i\) is the \(i\)th factor of \(\vec{f}\).

Frobenius reciprocity allows us to rewrite the right-hand side of Equation~\ref{eqn:fhats}. For each \(\vec{f}\in \big(Map(G'/H,\{a,b\})/G'\big)^{\times p}\), let \(K_f\) be the intersection of the stabilizers \(K_{f_i}\). Since the subgroups are nested and since we have only \(p\) terms, \(K_f=K_{f_j}\) for some \(j\), and without loss of generality, we may assume that \(K_f=K_{f_0}\) (otherwise, we simply make the obvious bookkeeping change). Frobenius reciprocity shows that we have an equality
\begin{multline}\label{eqn:fhatFrob}
\bigotimes_{j=0}^{p-1} 
\left(tr_{K_{f_j}}^{G'}\left(\bigotimes_{i=0}^{|G'/K_{f_j}|-1} f_j(\gamma^{pi}K_{f_j})\right) \right)= \\ tr_{K_f}^{G'}\left(\left(\bigotimes_{i=0}^{|G'/K_{f_0}|-1} f_0(\gamma^{pi}K_{f_0})\right)\otimes  
\bigotimes_{j=1}^{p-1}\left(
\sum_{i=0}^{[G':K_{f_j}]-1}\gamma^{pi} res_{K_f}^{K_{f_j}} \left(\bigotimes_{k=0}^{|G'/K_{f_j}|-1} f_j(\gamma^{pk}K_{f_j})\right)
\right)\right). 
\end{multline}
By construction, 
\[
res_{K_f}^{K_j}\left(\bigotimes_{k=0}^{|G'/K_{f_j}|-1} f_j(\gamma^{pk}K_{f_j})\right)=
\bigotimes_{k=0}^{|G'/K_f|-1} f_j(\gamma^{pk}K_f),
\]
and the action of \(\gamma^{pi}\) simply rotates the tensor factors as tensor induction.

We can further simplify the right-hand side of Equation~\ref{eqn:fhatFrob} by noticing that there is a coordinate-wise action of \({G'}^{\times p}\) on \(\Map(G'/H,\{a,b\})^{\times p}\). We see that in the \(i\)th factor of our formula, we are summing over the orbit \(G'/K_{f_i}\). There is an apparent asymmetry here too, because the first factor appears in isolation. However, if we remember that the transfer factors through the Weyl action then we notice that the sum can be taken over the set
\[
\mathcal G_{\vec{f}}=\left(\prod_{i=0}^{p-1} G'/K_{f_i}\right)/G'.
\]
If \(\vec{g}\in\mathcal G_{\vec{f}}\), then let \(g_i\) be the \(i\)th coordinate of the representative where the first coordinate is \(1\). We can then rewrite the right-hand side of Equation~\ref{eqn:fhatFrob} as
\begin{equation}
    \sum_{\vec{g}\in \mathcal G'} tr_{K_f}^{G'}
    \bigotimes_{i=0}^{p-1} \left(g_i\bigotimes_{j=0}^{|G'/K_f|-1} f_i(\gamma^{pj}K_f)\right).
\end{equation}
This gives us a refined version of what we need to show, so now we need to show
\begin{multline}\label{eqn:Desired}
    \sum_{f\in Map(G/H,\{a, b\})/_{G'}}tr^{G'}_{K_f}\left(\bigotimes_{i=0}^{|G/K_f|-1} f(\g^{i}K_f)\right) = \\
    \sum_{\vec{f}\in (Map(G'/H,\{a,b\})/{G'})^{\times p}}    \sum_{\vec{g}\in \mathcal G'} tr_{K_f}^{G'}
    \bigotimes_{i=0}^{p-1} \left(g_i\bigotimes_{j=0}^{|G'/K_f|-1} f_i(\gamma^{pj}K_f)\right).
\end{multline}

The argument now is combinatorial. It is helpful to think about the indexing set
$$
Map(G/H,\{a,b\}),
$$
both as a $G$-set and as a $G'$-set. First, the cosets $G/G'$ partition $G/H$ into $p$ copies of $G'/H$, indexed as $G'/H$, $\gamma G'/H$, $\dots$, $\gamma^{p-1}G'/H$. This partitioning is not $G$-equivariant, but since $G$ is abelian, it is $G'$-equivariant, and this gives us a $G'$-equivariant isomorphism
\[
G/H\cong \{0,\dots,p-1\}\times G'/H.
\]
Using this isomorphism, we have \(G'\)-equivariant isomorphisms
\begin{multline*}
Map(G/H,\{a,b\})\cong \\ Map\big(\{0,\dots,p-1\}\times G'/H,\{a,b\}\big)\cong  \big(Map(G'/H,\{a,b\})\big)^{\times p}.
\end{multline*}
Thus, given a function \(f\colon G/H\to\{a,b\}\), these isomorphisms allows us to write $f$ as 
\[
f\leftrightarrow \prod_{i=0}^{p-1} f_i=(f_0,\dots,f_{p-1})=:\vec{f},
\]
where \(f_i\) is the restriction of \(f\) to the coset \(\gamma^i G'/H\).

Although the isomorphisms used are not \(G\)-equivariant, we can still  describe the action of \(\gamma\) on this product:
\[
\gamma (f_0,\dots,f_{p-1})=(f_1,\dots,f_{p-1},\gamma^{p}f_0).
\]
We can use this to determine the stabilizer of \(f\). We know that the only \(G\)-fixed functions are the constant ones, and here, that means that each \(f_i\) is constant and they all agree. The \(G'\)-fixed functions come from those sequences \(\vec{f}\), where each \(f_i\) is a constant function. Finally, for a non-constant function, the stabilizer of \(f=(f_0,\dots,f_{p-1})\) is just
\[
K_f=Stab(f)=\bigcap_{i=0}^{p-1} K_{f_i},
\]
where \(K_{f_i}\) is the stabilizer of \(f_i\).

The stabilizer of \(\vec{f}\) in \((G')^{\times p}\) is 
\[
K_{\vec{f}}=K_{f_0}\times\dots\times K_{f_{p-1}}.
\]
This means that using the coordinate-wise action of \(G'\), each (non-constant) \(p\)-tuple \(\vec{f}\) contributes 
\[
|(G')^{\times p}/K_{\vec{f}}|=\prod_{i=0}^{p-1}[G':K_{f_i}]
\]
distinct functions to \(Map(G/H,\{a,b\})\). Putting this together, we see that the two sides of Equation~\ref{eqn:Desired} are giving the two ways to express this sum over \(Map(G/H,\{a,b\})\), completing the proof.
\end{proof}


\begin{defn}\label{nghdef}Let $M$ be an $H$-Mackey functor. Define $\ngh\M$ by $$\ngh\M := \free\M/\tr\M.$$
\end{defn}

\begin{lemma}\label{lem:nghrestriction} For $H<K<G$, $i_K^* \ngh\M$ is isomorphic to $(N^K_H\M)^{\bx |G/K|}$. 
\end{lemma}

\begin{proof} This result follows directly from Lemma~\ref{freegiso} and from the fact that the box powers of a quotient is the quotient of box powers.
\end{proof}

\section{Proof of the Main Theorem}

In this section we verify that the construction $\ngh\M$ satisfies the Main Theorem. Recall that a morphism $\phi\co\M \to \lu$ of Mackey functors consists of a collection of homomorphisms $\{\phi_H\co \M(G/H) \to \lu(G/H) : H \le G\}$ that commute with the appropriate restriction and transfer maps.  

\begin{thm}\label{is_a_functor} For all subgroups $H$ of $G$, the map $\ngh\co \mackh \to \mackg$ given by $\M \mapsto \ngh\M$ is a functor.
\end{thm}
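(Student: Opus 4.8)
The plan is to define $\ngh$ on a morphism $\phi\co \M \to \lu$ and then verify the two functor axioms by inducting up the subgroup lattice. Since $(\ngh\M)(G/H') = \M^{\bx|G/H|}(H/H')$ for all $H' \le H$, and the box product is a bifunctor, I would first set $(\ngh\phi)_{H'}$ to be the map induced at level $H/H'$ by the $|G/H|$-fold box product $\phi^{\bx|G/H|}\co \M^{\bx|G/H|} \to \lu^{\bx|G/H|}$; functoriality of $\bx$ guarantees these commute with the box-product restrictions and transfers among subgroups of $H$, and with the Weyl action of Definition \ref{nghweyl}, which merely permutes box-product factors. I would then induct upward. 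For $H < K \le G$ with maximal subgroup $K''$, I define $(\ngh\phi)_K$ on the free summand $\Z\{\M(H/H)^{\times|G/K|}\}$ on generators by $N(\bfm^{\times|G/K|}) \mapsto N(\phi_H(m_e)\times\cdots\times\phi_H(m_{\g^{|G/K|-1}}))$, extended $\Z$-linearly, and on the summand $Im(tr^K_{K''}) = (\ngh\M)(G/K'')/_{W_K(K'')}$ as the map induced by $(\ngh\phi)_{K''}$. The latter descends to the Weyl coinvariants precisely because the inductive hypothesis supplies that $(\ngh\phi)_{K''}$ is $W_K(K'')$-equivariant, so this is where I thread the equivariance claim through the induction.

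The main obstacle is well-definedness: I must show $(\ngh\phi)_K$ carries the Tambara reciprocity submodule $TR$ of $(\ngh\M)(G/K)$ into that of $(\ngh\lu)(G/K)$, so that it descends to the quotient defining $(\ngh\M)(G/K)$. I would check this on the two families of generators. For a generator of the form \eqref{TRsum}, additivity of $\phi_H$ sends $N(\bfm(a+b)_j^{\times|G/K|})$ to $N(\phi_H(\bfm)(\phi_H(a)+\phi_H(b))_j^{\times|G/K|})$, and similarly for the $N(\bfm a_j)$ and $N(\bfm b_j)$ terms; the transfer terms are handled by the inductive definition once I know that applying $\phi_H$ factorwise commutes with forming the elements $(\baj\bbj)^{K'}_k$ and $g_H(\baj,\bbj)$. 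This is the point on which everything hinges: those elements are obtained by evaluating the \emph{universal}, group-determined monomials and polynomials of Theorem \ref{sumdef} at Weyl conjugates of the specified inputs, together with the formal unit $1$. Since $\phi$ is Weyl-equivariant (any morphism of Mackey functors commutes with the conjugation action) and I extend it by $\phi_H(1)=1$, factorwise $\phi_H$ commutes with Weyl conjugation and with the bookkeeping of the preceding lemmas, so the image of \eqref{TRsum} is the corresponding generator of $TR$ in $(\ngh\lu)(G/K)$. For a generator \eqref{TRtransfer} with $c = tr^H_{H'}(x)$, I use that $\phi$ commutes with $tr^H_{H'}$ and with $res^H_{H'}$, so that $\phi_H(c) = tr^H_{H'}(\phi_{H'}(x))$ and the inputs $r_i = res^H_{H'}(m_i)$ to $f(\bfx)$ map to the corresponding restrictions in $\lu$; hence \eqref{TRtransfer} again lands in the matching generator of $TR$ for $\ngh\lu$.

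It remains to confirm that $\ngh\phi$ is a morphism of Mackey functors and that $\ngh$ respects identities and composition. Compatibility with $tr^K_{K''}$ holds by construction, since $\ngh\phi$ on the transfer summand is exactly $(\ngh\phi)_{K''}$ pushed through the quotient map. Compatibility with $res^K_{K''}$ I would verify directly against Definition \ref{resdef}: on $Im(tr^K_{K''})$ both composites give $\sum_{\g^s\in W_K(K'')}\g^s(-)$ and agree by Weyl-equivariance, while on a generator $N(\bfm^{\times|G/K|})$ the restriction is a product or tensor product over $K/K''$ of copies of $\bfm$, with which factorwise $\phi_H$ visibly commutes. Weyl-equivariance of $(\ngh\phi)_K$ itself follows because $\g$ acts by a cyclic permutation of factors in Definition \ref{nghweyl} and $\phi_H$ is applied factorwise, which closes the inductive step. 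Finally $\ngh(\mathrm{id}_\M) = \mathrm{id}_{\ngh\M}$ and $\ngh(\psi\circ\phi) = \ngh\psi\circ\ngh\phi$ are immediate from the factorwise definition on free generators and the inductive definition on transfer summands, together with functoriality of $\bx$ at the levels below $H$. Hence $\ngh$ is a functor.
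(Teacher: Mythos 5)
Your proposal is correct and follows essentially the same route as the paper's proof: the identical definition of $\ngh(\phi)$ (box product $\phi_{H'}^{\bx|G/H|}$ below $H$, factorwise $\phi_H$ on free generators, and the inductive definition on transfer summands), with well-definedness resting on the fact that the Tambara reciprocity polynomials are universally determined by $G$. The paper states the $TR$-preservation, Weyl-equivariance, and functoriality claims without elaboration, so your explicit verification of the two families of $TR$ generators, the descent to Weyl coinvariants, and the restriction compatibility simply fills in details the paper leaves implicit.
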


\begin{proof} Given a morphism $\phi\co \M \to \lu$ in $\mackh$ we define the associated morphism $\ngh(\phi)\co \ngh\M \to \ngh\lu$ in $\mackg$ as follows.

For all subgroups $H'$ in $H$ define $\ngh(\phi)_{H'}$ to be $\phi_{H'}^{\bx |G/H|}$. If $K=C_{p^j}$ is a subgroup such that $H<K\le G$, then we inductively define $\ngh(\phi)_K$ so that it is compatible with the appropriate restriction and transfer maps. More specifically, for all $tr(x)$ in the $Im(tr^j_{j-1})$ summand of $(\ngh\M)(G/K)$ we define $\ngh(\phi)_K(tr(x))$ to be 
$$tr_{j-1}^j(\ngh(\phi)_{C_{p^{j-1}}}(x)).$$ 
If $N(\bfm^{\times|G/K|})$ is a generator in the image of the free summand in $(\ngh\M)(G/K)$, define $\ngh(\phi)_K(N(\bfm^{\times|G/K|}))$ to be $$N(\phi_H(m_e) \times \phi_H(m_{\g}) \times \cdots \times \phi_H(m_{\g^{|G/K|-1}})).$$  Further, $\ngh(\phi)$ maps $\tr\M$ to $\tr\lu$ since each $(\ba\bb)_f$, $g_H(\ba,\bb)$ and $f(\bmx)$ are universally determined by the group $G$. Then, by definition, the maps $\{\ngh(\phi)_K : K \le G\}$ form a natural transformation of $G$-Mackey functors, and the assignment $\phi \mapsto \ngh(\phi)$ is functorial. 
\end{proof}

\subsection{$\ngh$ is Isomorphic to the Composition $N^G_KN^K_H$}

We next prove that the norm functors satisfy Property (a) of the Main Theorem. Thus, we will prove that $\ngh\co \mackh \to \mackg$  (given by $\M \mapsto \ngh\M$) is isomorphic to the composition of functors $N^G_KN^K_H$ whenever $H<K<G$.  By induction it suffices to show that for $G'$ maximal in $G$, $N^G_{G'}N^{G'}_H\M$ is isomorphic to $\ngh\M$.

\begin{thm} Let $H$ be a subgroup of $G$ and let $G'$ be the maximal subgroup of $G$. Then $N^G_{G'}N^{G'}_H\M$ is isomorphic to $\ngh\M$.
\end{thm}

\begin{proof} First, we will show that $(N^G_{G'}N^{G'}_H\M)(G/K)$ is isomorphic to $(\ngh\M)(G/K)$ for all $K \le G'$. By Lemma~\ref{lem:nghrestriction} $(\ngh\M)(G/K)$ is isomorphic to $(N^K_H\M)^{\bx|G/K|}(K/K)$. Further, by definition, $$(N^G_{G'}N^{G'}_H\M)(G/K) = (N^{G'}_H\M)^{\bx |G/G'|}(G'/K),$$ and by Lemma~\ref{lem:nghrestriction} $(N^{G'}_H\M)(G'/K)$ is isomorphic to $(N^K_H\M)^{\bx|G'/K|}$. Hence, $(N^G_{G'}N^{G'}_H\M)(G/K)$ is isomorphic to $\left((N^K_H\M)^{\bx|G'/K|}\right)^{\bx|G/G'|}(K/K)$, which is isomorphic to $(N^K_H\M)^{\bx|G/K|}(K/K)$.

It remains to show that $(N^G_{G'}N^{G'}_H\M)(G/G)$ is isomorphic to $(\ngh\M)(G/G)$. By definition,
\begin{eqnarray*}
(\ngh\M)(G/G) &=& \left(\free\M\right)(G/G)/(\tr\M)(G/G) \\
&=& \left(\Z\{\M(H/H)\} \oplus (\ngh\M)(G/G')/_{W_G(G')}\right)/_{(\tr\M)(G/G)}.
\end{eqnarray*}
Then the module $(N^G_{G'}N^{G'}_H \M)(G/G)$ is $$\left( \Z\{(N^{G'}_H \M)(G'/G') \} \oplus (N^G_{G'}N^{G'}_H\M)(G/G')/_{W_G(G')}\right)/_{(\tr\M)(G/G)},$$ which equals $$\Big(\Z\{[\Z\{\M(H/H)\} \oplus Im(tr)]/_{(\tr[{G'}]\M)(G'/G')}\} \oplus Im(tr^G_{G'}) \Big)/_{(\tr\M)(G/G)}.$$ Quotienting by $(\tr\M)(G/G)$ identifies all elements in the transfer summand of $(N^{G'}_H \M)(G'/G')$ with elements in $Im(tr^G_{G'})$. Thus,
\begin{eqnarray*}
(N^G_{G'}N^{G'}_H\M)(G/G) &\cong& 
\left(\Z\{\Z\{\M(H/H)\}\} \oplus Im(tr^G_{G'}) \right)/_{(TR^G\M)(G/G)}. 
\end{eqnarray*}
Finally, since the Tambara reciprocity submodule identifies sums of generators in the free summand with elements in the transfer summand, it follows that $(N^G_{G'}N^{G'}_H\M)(G/G)$ is isomorphic to $\left(\Z\{\M(H/H)\} \oplus Im(tr^G_{G'})\right)/_{(\tr\M)(G/G)}$. Since $Im(tr^G_{G'})$ is isomorphic to $(\ngh\M)(G/G')/_{W_G(G')}$, it follows that $$(N^G_{G'}N^{G'}_H\M)(G/G)\cong(\ngh\M)(G/G).$$ \end{proof}






\begin{cor}\label{composable} For all subgroups $H$ and $K$ of $G$ such that $H<K<G$, the norm functor $N^G_H: \mackh \to \mackg$ is isomorphic to the composition of functors $N^G_KN^K_H$.
\end{cor}

\subsection{The Norm Functors are Strong Symmetric Monoidal}

We now show that for all subgroups $H$ of $G$, $\ngh\co \mackh \to \mackg$ is strong symmetric monoidal, and thus the norm functors satisfy Property (b) of the Main Theorem. By Corollary \ref{composable}, it suffices to let $H$ be maximal in $G$. In this case $\ngh\M$ simplifies nicely. Indeed, if $H'$ is a subgroup of $H$, then $(\ngh\M)(G/H') = \M^{\bx |G/H|}(H/H')$. The only remaining module is $(\ngh\M)(G/G)$.
 
 \begin{fact}\label{maxglevel} The module $(\ngh \M)(G/G)$ is $(\free \M)(G/G)/(\tr \M)(G/G)$, which is  $$\left(\Z\{\M(H/H)\} \oplus \M^{\bx|G/H|}(H/H)/_{W_G(H)}\right)/_{TR}, $$ where $TR$ is the ``Tambara reciprocity'' submodule generated by elements of the forms
 $$N(a+b) -N(a) -N(b) -tr^G_H(g_H({\bm{a}},{\bm{b}}))$$ and $$N(tr^H_{H'}(x)) - tr^G_{H'}(f({\bm{x}}))$$ for all $a$ and $b$ in $\M(H/H)$, $x$ in $\M(H/H')$ and $H'<H$. 
 \end{fact}
 
 To show that $\ngh$ is strong symmetric monoidal, given $H$-Mackey functors $\M$ and $\lu$ we will build an isomorphism $$\Psi \co \ngh\M \bx \ngh\lu \to \ngh(\M \bx \lu)$$ by defining a collection of isomorphisms $$\{\Psi_K\colon (\ngh\M \bx \ngh\lu)(G/K) \to \ngh(\M \bx \lu)(G/K) \text{ for all } K \le G\}.$$ For $H' \le H$ the isomorphism $\Psi_{H'}$ will be induced from properties of the box product. So, the work lies in defining $\Psi_G$. Before we do so, we explicitly describe $(\ngh\M\bx\ngh\lu)(G/G)$ and $\ngh(\M \bx \lu)(G/G)$.
 
 \begin{lemma}\label{explnmnl} Let $H$ be maximal in $G$ and let $\M$ and $\lu$ be $H$-Mackey functors. 
 The module $(\ngh\M\bx \ngh\lu)(G/G)$ is isomorphic to $$ \left( \Z\{\M(H/H) \times \lu(H/H)\} \oplus Im(tr^G_H)\right)/_{\widetilde{FR}},$$ where $Im(tr^G_H)$ is $(\M^{\bx|G/H|} \bx \lu^{\bx|G/H|})(H/H)/_{W_G(H)}$. The submodule $\widetilde{FR}$ is generated by $$N((a+b)\times l) - N(a\times l) - N(b \times l)-tr^G_H\Big(g_H({\bm{a}},{\bm{b}})\otimes \botimes_{|G/H|}l\Big),$$ $$N(m \times (y+z)) - N(m\times y) - N(m\times z) - tr^G_H\Big(\big(\botimes_{|G/H|} m\big) \otimes g_H({\bm{y}},{\bm{z}})\Big),$$ $$ N(tr^H_{H'}(d)\times l) - tr^G_{H'}\Big(f({\bm{d}})\otimes \botimes_{|G/H|} res^H_{H'}(l)\Big),$$ and $$N(m\times tr^H_{H'}(x)) - tr^G_{H'}\Big(\botimes_{|G/H|} res^H_{H'}(m) \otimes f({\bm{x}})\Big)$$ for all $a$, $b$, and $m$ in $\M(H/H)$, $d$ in $\M(H/H')$, $y$ $z$, and $l$ in $\lu(H/H)$, $x$ in $\lu(H/H')$ and subgroups $H'$ of $H$. \end{lemma}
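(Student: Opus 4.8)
The plan is to compute the box product of the $G$-Mackey functors $\ngh\M$ and $\ngh\lu$ one orbit at a time, using the constructive definition of the box product (Definition \ref{bxproddef} and its stated extension to cyclic $p$-groups) and substituting the explicit descriptions of $\ngh\M$ and $\ngh\lu$ supplied by Definition \ref{nghdef} and Fact \ref{maxglevel}. Since $H$ is maximal in $G$, the only orbit strictly above $H$ is $G/G$, so the computation divides into the orbits $G/H'$ with $H'\le H$ and the top orbit $G/G$, matching the two cases in the statement.

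For the orbits $G/H'$ with $H'\le H$, I would first record that, by Definition \ref{nghdef}, the values of $\ngh\M$ on the orbits $G/H''$ with $H''\le H$ together with their restriction and transfer maps are exactly those of the $H$-Mackey functor $\M^{\bx|G/H|}$, and likewise $\ngh\lu$ furnishes $\lu^{\bx|G/H|}$. The constructive box product at level $G/H'$ is built recursively from the values at orbits $G/H''$ with $H''\le H'$ and the local Weyl coinvariants $W_{H'}(H'')$ entering each transfer image; these local Weyl groups are the same whether the inputs are regarded as $G$- or as $H$-Mackey functors. Matching the two recursive presentations term by term then yields $(\nmnl)(G/H')=(\M^{\bx|G/H|}\bx\lu^{\bx|G/H|})(H/H')$, and the description of the simple tensors $\bfm^{\otimes|G/H|}\otimes{\bm{l}}^{\otimes|G/H|}$ together with the diagonal Weyl action is read off directly from Definitions \ref{bxproddef} and \ref{nghweyl}.

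For the orbit $G/G$ I would apply the constructive box product at the top: because $H$ is the maximal subgroup, $(\nmnl)(G/G)$ is the quotient of $\ngh\M(G/G)\otimes\ngh\lu(G/G)\oplus (\nmnl)(G/H)/_{W_G(H)}$ by the Frobenius reciprocity submodule, with the transfer summand being the $W_G(H)$-coinvariants of the level $G/H$ computed above. Using the presentation of $\ngh\M(G/G)$ and $\ngh\lu(G/G)$ from Fact \ref{maxglevel}, the free part $\Z\{\M(H/H)\}\otimes\Z\{\lu(H/H)\}$ is generated by the products $N(a)\otimes N(l)$, which I rename $N(a\times l)$ to produce the free summand $\Z\{\M(H/H)\times\lu(H/H)\}$. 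The generators of $\widetilde{FR}$ then arise by combining the internal Tambara reciprocity relations of each tensor factor with Frobenius reciprocity of the outer box product. For example, the $TR$ relation $N(a+b)=N(a)+N(b)+tr^G_H(g_H({\bm{a}},{\bm{b}}))$ inside $\ngh\M(G/G)$ gives $N((a+b)\times l)-N(a\times l)-N(b\times l)=tr^G_H(g_H({\bm{a}},{\bm{b}}))\otimes N(l)$; applying Frobenius reciprocity and the restriction formula $res^G_H(N(l))=\botimes_{|G/H|}l$ of Definition \ref{resdef} rewrites the right-hand side as $tr^G_H(g_H({\bm{a}},{\bm{b}})\otimes\botimes_{|G/H|}l)$, which is precisely the first displayed generator of $\widetilde{FR}$. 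The generators coming from the $\lu$ factor and from norms of transfers are produced the same way, the latter using the norm-of-transfer relation of Fact \ref{maxglevel}.

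The routine portion is the term-by-term matching of recursive definitions in the lower orbits; the main obstacle is the bookkeeping at the top orbit, where I must verify that quotienting $\ngh\M(G/G)\otimes\ngh\lu(G/G)$ by bilinearity, together with Frobenius reciprocity of the outer box product, yields exactly the four listed families of $\widetilde{FR}$ relations and nothing more. Concretely, this reduces to checking that every mixed term — a free generator tensored with a transferred element, or a product of two transferred elements — is absorbed into the transfer summand $Im(tr^G_H)$ by Frobenius reciprocity, so that the only surviving free generators are the symbols $N(a\times l)$ and the only relations among them are the four displayed families.
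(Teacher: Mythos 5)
Your proposal is correct and takes essentially the same approach as the paper: the paper also reads off the lower levels $(\ngh\M \bx \ngh\lu)(G/H')$ directly from the definitions of $\ngh$ and the box product, and at the level $G/G$ argues (by analogy with Lemma \ref{ntildeiso}) by absorbing all terms involving transfers into $Im(tr^G_H)$ via Frobenius reciprocity and then invoking basic properties of tensor products. Your explicit conversion of each factor's $TR$ relations into the four families of generators of $\widetilde{FR}$ just fills in details that the paper leaves implicit.
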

 
 \begin{proof} First, $$(\ngh\M \bx \ngh\lu)(G/G) = \left(\ngh\M(G/G) \otimes \ngh\lu(G/G) \oplus Im(tr^G_H) \right)/_{FR}.$$ The submodule $\widetilde{FR}$ stems from combining the relations defined by the $FR$ submodule with the relations from each $TR$ submodule. For example, by the $TR$ submodule of $\ngh\M(G/G)$ we have $$N(a+b) \otimes N(l) - N(a) \otimes N(l) - N(b) \otimes N(l) - tr^G_H(g_H(\bm{a},\bm{b})
)\otimes N(l).$$ But, now we use the $FR$ submodule and the fact that $res^G_H(N(l)) = \bigotimes_{|G/H|} l$ to identify $tr^G_H(g_H(\bm{a},\bm{b}))\otimes N(l)$ with $tr^G_H\left(g_H(\bm{a},\bm{b})\otimes \bigotimes_{|G/H|} l\right)$. Finally, we use the fact that for any modules $R$ and $S$, $\Z\{R\} \otimes \Z\{S\}$ is isomorphic to $\Z\{R \times S\}$ to arrive at the module given in the lemma.
 \end{proof}
 
We introduce some notation before describing $\ngh(\M \bx \lu)$.  First, let $g_H({\bm{a}}\otimes {\bm{y}},{\bm{b}}\otimes {\bm{z}})$ be a polynomial similar to $g_H({\bm{a}},{\bm{b}})$. Specifically, let $$g_H({\bm{a}}\otimes {\bm{y}},{\bm{b}}\otimes {\bm{z}}) = \sum_{f \in \mathcal I_k/G} \left(\bigotimes_{gH \in G/H} f(g H)\right)$$ where now $\mathcal I_k = \left( Map(G/H, \{a\otimes y,b\otimes z\}) - Map(G/G, \{a \otimes y,b\otimes z\})\right)$.

Further, we define $f({\bm{d}}\otimes res^H_{H'}({\bm{y})})$ similarly to $f({\bm{x}})$. So, $$ f({\bm{d}}\otimes res^H_{H'}({\bm{y})}) = \sum_{s=1}^r \botimes_{i=0}^{|G/H|-1} \left(\g^{m_{i,s}} d\otimes  res^H_{H'}(y)\right).$$ 
We define $f(res^H_{H'}({\bm{a}})\otimes {\bm{x}})$  analogously.

 \begin{lemma}\label{explnml} Let $H$ be maximal in $G$ and let $\M$ and $\lu$ be $H$-Mackey functors. The module $\nml(G/G)$ is isomorphic to $$ \left(\Z\{\M(H/H) \otimes \lu(H/H)\} \oplus Im(tr^G_H) \right)/_{\widetilde{TR}},$$ where $Im(tr^G_H)$ is $(\M\bx \lu)^{\bx|G/H|}(H/H)/_{W_G(H)}$, and $\widetilde{TR}$ is generated by the following elements for all $a\otimes y$ and $b\otimes z$ in  $\M(H/H) \otimes \lu(H/H)$, $d$ in $\M(H/H')$, $x$ in $\lu(H/H')$, and subgroups $H'$ of $H$: $$N(a\otimes y + b\otimes z) - N(a\otimes y) - N(b \otimes z) - tr^G_H(g_H({\bm{a}}\otimes {\bm{y}},{\bm{b}}\otimes {\bm{z}})),$$ $$ N(tr^H_{H'}(d)\otimes y) - tr^G_{H'}(f({\bm{d}}\otimes res^H_{H'}({\bm{y}}))),$$  $$N(a\otimes tr^H_{H'}(x)) - tr^G_{H'}(f(res^H_{H'}({\bm{a}})\otimes {\bm{x}})).$$  \end{lemma}
 
 \begin{proof} The module $$\ngh(\M \bx \lu)(G/G) = \left(\Z\{(\M\bx \lu)(H/H)\} \oplus Im(tr^G_H)\right)/_{TR},$$ and by Definition \ref{bxproddef}, $(\M\bx \lu)(H/H)= (\M(H/H)\otimes \lu(H/H) \oplus Im(tr))/_{FR}$. Thus, since we quotient by the $TR$ submodule in $\ngh(\M\bx\lu)(G/G)$, we can identify all elements in $Im(tr)$ with the appropriate elements in $Im(tr^G_H)$. If we then combine the Frobenius reciprocity submodule of  $(\M\bx\lu)(H/H)$ with Tambara reciprocity the resulting module is isomorphic to $$ \left(\Z\{\M(H/H) \otimes \lu(H/H)\} \oplus Im(tr^G_H) \right)/_{\widetilde{TR}}.\vspace{-20pt}$$\end{proof}

 \begin{thm}\label{nghsm} For all subgroups $H$ of $G$, the norm functor $\ngh\co \mackh \to \mackg$ is a strong symmetric monoidal functor.
\end{thm}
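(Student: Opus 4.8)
The plan is to reduce to the case where $H$ is maximal in $G$, using Property (a) of the Main Theorem (Theorem \ref{composable}): since $\ngh \cong N^G_J N^J_H$ and a composite of strong symmetric monoidal functors is strong symmetric monoidal, it suffices to treat a single maximal inclusion $H < G$. In this case Lemmas \ref{explnmnl} and \ref{explnml} give explicit descriptions of $\ngh\M \bx \ngh\lu$ and $\ngh(\M \bx \lu)$ that agree level-by-level below $H$, so the real work happens only at the top level $G/G$.

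First I would construct the natural transformation $\Psi\co \ngh\M \bx \ngh\lu \to \ngh(\M \bx \lu)$ level by level. For a subgroup $H'$ of $H$, Lemma \ref{explnmnl} identifies $(\nmnl)(G/H')$ with $(\M^{\bx|G/H|}\bx\lu^{\bx|G/H|})(H/H')$ and Lemma \ref{explnml} identifies $\nml(G/H')$ with $(\M\bx\lu)(H/H')$; the map $\Psi_{H'}$ is then the canonical reindexing isomorphism that interleaves the $\M$- and $\lu$-tensor factors, matching the two Weyl actions displayed in those lemmas. For the top level I would define $\Psi_G$ by specifying two maps as in the proof of Theorem \ref{composable}: one on the free summand sending $N(a\times y)$ to $N(a\otimes y)$, and one on the transfer summand $\M^{\bx|G/H|}(H/H)/_{W_G(H)}$, which is literally the same module $Im(tr^G_H)$ in both descriptions and so receives the identity. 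The symmetric monoidal structure maps (associativity, unit, and symmetry constraints) must then be checked to commute with $\Psi$; I would verify compatibility with restriction and transfer maps so that $\Psi$ is a genuine morphism of $G$-Mackey functors.

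The main obstacle, and the heart of the proof, is showing $\Psi_G$ is well-defined and bijective, i.e$.$ that it carries the submodule $\widetilde{FR}$ of Lemma \ref{explnmnl} isomorphically onto $\widetilde{TR}$ of Lemma \ref{explnml}. These two submodules have visibly different generators: $\widetilde{FR}$ encodes separate additivity relations in each tensor slot together with Frobenius-type relations, whereas $\widetilde{TR}$ encodes a single additivity relation for $N$ on the combined tensor $\M(H/H)\otimes\lu(H/H)$ plus norm-of-transfer relations. I would argue that under the reindexing on the transfer summand these presentations coincide: the key point is that the polynomials $g_H$ and $f$ occurring in both $TR$ descriptions are universally determined by $G$ (Fact \ref{monfacts}), so evaluating $g_H(\bm{a}\otimes\bm{y},\bm{b}\otimes\bm{z})$ matches the mixed generators of $\widetilde{FR}$ after expanding via bilinearity and Frobenius reciprocity. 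Establishing this identification of relation submodules is exactly where the bookkeeping is delicate, since one must track how the Frobenius reciprocity relations of $(\M\bx\lu)(H/H)$ recombine the per-slot additivity relations into the single tensor-slot relation.

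Finally, once $\Psi$ is shown to be a natural isomorphism respecting restrictions and transfers, I would confirm coherence: that $\Psi$ is compatible with the associativity, unit, and symmetry isomorphisms of the box product on both $\mackh$ and $\mackg$, and that the unit object $\A_H$ (the $H$-Burnside Mackey functor) is carried to the unit $\A_G$ up to canonical isomorphism, using that $N^G_e\Z \cong \A$ as noted after Example \ref{NZ2}. These coherence diagrams reduce to the level-wise reindexing isomorphisms below $H$ together with the single identification at the top level, so they follow formally from the construction of $\Psi$ rather than requiring new computation. This completes the verification that $\ngh$ is strong symmetric monoidal.
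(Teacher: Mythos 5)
Your overall outline---reducing to the case where $H$ is maximal via Theorem \ref{composable}, taking $\Psi_{H'}$ to be the reindexing isomorphism for $H'\le H$, and defining $\Psi_G$ by two maps, $\psi_G$ on $Im(tr^G_H)$ (which, note, is induced by $\Psi_H$ rather than literally the identity) and $\psi_G'(N(a\times y))=N(a\otimes y)$ on the free summand---matches the paper. The genuine gap is in your key step: you propose to prove that $\Psi_G$ is well-defined and bijective by showing it ``carries $\widetilde{FR}$ isomorphically onto $\widetilde{TR}$,'' and this formulation cannot be carried out. The map $\psi_G'\co\Z\{\M(H/H)\times\lu(H/H)\}\to\Z\{\M(H/H)\otimes\lu(H/H)\}$ is neither injective nor surjective before passing to quotients: distinct basis elements such as $N(2a\times y)$ and $N(a\times 2y)$ hit the same generator, since $2a\otimes y=a\otimes 2y$ in the tensor product, while generators $N(\xi)$ indexed by sums of simple tensors $\xi=a\otimes y+b\otimes z$ are not in the image of $\psi_G'$ at all. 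In particular the $\widetilde{TR}$-generator $N(a\otimes y+b\otimes z)-N(a\otimes y)-N(b\otimes z)-tr^G_H(g_H({\bm{a}}\otimes{\bm{y}},{\bm{b}}\otimes{\bm{z}}))$ has no preimage generator in $\widetilde{FR}$, so the two relation submodules do not correspond under $\Psi_G$; the correspondence between the two presentations emerges only after quotienting. Since the ambient map of direct sums is not an isomorphism, ``the submodule of relations maps isomorphically onto the submodule of relations'' is neither true nor the right criterion for the induced map on quotients to be bijective.

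What the paper does instead---and what repairs your argument---is to quotient both top-level modules by the image of the transfer. Modulo $Im(tr)$ every Tambara-reciprocity correction term vanishes, so $(\nmnl)(G/G)/_{Im(tr)}$ becomes $\Z\{\M(H/H)/_{Im(tr)}\times\lu(H/H)/_{Im(tr)}\}/_Q$ with $Q$ exactly the biadditivity relations, i.e.\ the standard presentation of a tensor product, while $\nml(G/G)/_{Im(tr)}\cong\M(H/H)/_{Im(tr)}\otimes\lu(H/H)/_{Im(tr)}$; hence the induced map on these quotients is an isomorphism by the universal property of $\otimes$, with no tracking of the polynomials $g_H$ and $f$ required at this stage. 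One then applies the Five Lemma to the diagram of short exact sequences $Im(tr)\rightarrowtail(-)\twoheadrightarrow(-)/_{Im(tr)}$, exactly as for $\Phi_{\widetilde{K}}$ in the proof of Theorem \ref{composable}. Your ``delicate bookkeeping'' with the universal polynomials and Fact \ref{monfacts} is still needed, but only for the one-directional check that $\Psi_G$ sends the generators of $\widetilde{FR}$ into $\widetilde{TR}$ (well-definedness); injectivity and surjectivity should not be attacked at the level of the relation submodules themselves.
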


\begin{proof} By Corollary \ref{composable} it suffices to let $H$ be maximal  in $G$.  We will build an isomorphism $\Psi\co \ngh \M \bx \ngh \lu \to \ngh(\M \bx \lu)$ by defining a collection of isomorphisms $$\{\Psi_K: (\nmnl)(G/K) \to \nml(G/K) \text{ for all } K\le G\}.$$

First, let $H'$ be a subgroup of $H$. By definition, $(\nmnl)(G/H')= (\M^{\bx |G/H|} \bx \lu^{\bx|G/H|})(H/H')$ and $\nml(G/H')=(\M \bx \lu)^{\bx |G/H|}(H/H')$. Since $\bx$ is symmetric monoidal we define $\Psi_{H'}$ to be the natural isomorphism $$ (\M^{\bx |G/H|} \bx \lu^{\bx|G/H|})(H/H') \to (\M \bx \lu)^{\bx |G/H|}(H/H').$$ 

It remains to  define the isomorphism  $\Psi_G$.  By Lemmas \ref{explnmnl} and \ref{explnml},  $$(\nmnl)(G/G) \cong \left( \Z\{\M(H/H) \times \lu(H/H)\} \oplus Im(tr^G_H)\right)/_{\widetilde{FR}},$$ and $$\nml(G/G) \cong \left(\Z\{\M(H/H) \otimes \lu(H/H)\} \oplus Im(tr^G_H) \right)/_{\widetilde{TR}}.$$

Thus, we define $\Psi_G$ to be the direct sum of the following two maps. First, let $\psi_G\colon Im(tr^G_H) \to Im(tr^G_H)$ be the isomorphism induced from $\Psi_H$. Then define $$\psi_{G}'\co\Z\{\M(H/H) \times \lu(H/H)\} \to \Z\{\M(H/H) \otimes \lu(H/H)\}$$ by  $\psi'_G(N(a\times y)) =N(a\otimes y)$, and so we define $\Psi_G$ to be $\psi_G' \oplus \psi_G$. Because of the way $\psi_G$ reindexes the elements of $Im(tr^G_H)$, the map $\Psi_G$ sends all elements in $\widetilde{FR}$ to elements in $\widetilde{TR}$. Further, by construction, $\Psi_G tr^G_H = tr^G_H\Psi_H$ and $res^G_H \Psi_G = \Psi_H res^G_H$.   Finally, the relations defined by $\widetilde{FR}$ are analogous to those that define a tensor product from a Cartesian product. It follows that $\Psi_G$ is an isomorphism.
\end{proof}

 We end this section with a proof of the Main Theorem.
  \begin{proof}[Proof of the Main Theorem] For all subgroups $H$ of $G$ and $H$-Mackey functors $\M$, let $\ngh\M$ be the $G$-Mackey functor defined in Definition \ref{nghdef}. Then define the norm functors $\ngh\co \mackh \to \mackg$ by $\M \mapsto \ngh\M$. These maps satisfy all properties given in the Main Theorem by Theorem \ref{is_a_functor}, Corollary  \ref{composable}, and Theorem \ref{nghsm}.
  \end{proof}
  
\section{Proof of Theorem \ref{mainthm2}}


Finally, given a cyclic $p$-group $G$, we use the norm functors $\ngh:\mackh \to \mackg$  to define a $G$-symmetric monoidal structure on the category of $G$-Mackey functors. We will then show that $G$-Tambara functors are the $G$-commutative monoids under this structure, thus proving Theorem \ref{mainthm2}.

We begin with Hill and Hopkins' definition of a $G$-symmetric monoidal structure \cite{HillHopkins}. 

\begin{defn}\label{gsm} 
Let $\setiso$ be the category whose objects are finite $G$-sets  and whose morphisms are isomorphisms of $G$-sets. Further, let $(\C, \boxtimes, e)$ be a symmetric monoidal category. A {\emph{$G$-symmetric monoidal structure}} on $\C$ consists of a functor $$(-) \otimes (-)\co \setiso \times \C \to \C$$  that satisfies the following properties.
\begin{enumerate}
\item $(X \amalg Y) \otimes C$ is naturally isomorphic to  $(X \otimes C) \boxtimes (Y \otimes C)$ and $X \otimes (C \boxtimes D)$ is naturally isomorphic to $(X \otimes C) \boxtimes (X \otimes D)$.
\item When restricted to $\setisor \times \C$ this functor is the canonical exponentiation map given by $X\otimes C = C^{\boxtimes |X|}$.  
\item $X \otimes (Y \otimes C)$ is naturally isomorphic to $(X \times Y) \otimes C$.
\end{enumerate}
\end{defn}

\begin{thm}\label{mainthm}
 Let $\ih\co \mackg \to \mackh$ be the forgetful functor. For a cyclic $p$-group $G$ the functor $(-) \otimes (-)\co \setiso \times \mackg \to \mackg$ defined by  \begin{itemize}
\item $\emptyset \otimes \M := \A,$ where $\A$ is the Burnside Mackey functor,
\item $G/H \otimes \M := \ngh \ih \M$ for all orbits $G/H$ of $G,$ and 
\item $(X \amalg Y) \otimes \M := (X \otimes \M) \bx (Y \otimes \M)$ for all $X$ and $Y$ in $\setiso$
\end{itemize} is a $G$-symmetric monoidal structure on $\mackg.$ \end{thm}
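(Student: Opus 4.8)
The plan is to verify the three axioms of Definition \ref{gsm} directly for the functor $(-)\otimes(-)$ as defined, using the structural results already established for the norm functors. First I would check that the functor is well defined on objects and morphisms: since every finite $G$-set decomposes uniquely (up to reordering) into orbits, the first and third defining bullets determine $X \otimes \M$ on all objects once it is specified on orbits and on $\emptyset$, and the symmetry and associativity of the box product guarantee this is independent of the chosen decomposition. On morphisms, an isomorphism of $G$-sets is a bijection respecting the $G$-action, hence a permutation of orbits together with isomorphisms of individual orbits; I would use the functoriality of $\ngh$ established in Theorem \ref{is_a_functor} together with the symmetry isomorphisms of $\bx$ to define the induced map, and the coherence of the symmetric monoidal structure on $\mackg$ to see it is well defined.

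Next I would dispatch the three numbered properties. Property 1 splits into two parts. The identity $(X\amalg Y)\otimes \M = (X\otimes \M)\bx(Y\otimes \M)$ is immediate from the third defining bullet. For $X \otimes (\M \bx \lu) = (X\otimes\M)\bx(X\otimes\lu)$, I would reduce to the orbit case $X = G/H$ by the distributivity of $\bx$ over the orbit decomposition, and then invoke Theorem \ref{nghsm}: since $\ngh$ is strong symmetric monoidal there is a natural isomorphism $\ngh(\M'\bx\lu')\cong \ngh\M'\bx\ngh\lu'$, and I would combine this with the fact that the forgetful functor $\ih$ is itself symmetric monoidal (it commutes with $\bx$, since restriction of the box product is the box product of restrictions) to obtain $G/H\otimes(\M\bx\lu)=\ngh\ih(\M\bx\lu)\cong\ngh(\ih\M\bx\ih\lu)\cong\ngh\ih\M\bx\ngh\ih\lu$. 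Property 2 is essentially a definitional check: when $X$ is a finite set with trivial $G$-action, $X=\coprod G/G$, and $G/G\otimes\M=N^G_G\ih\M=\M$ since $N^G_G$ is the identity functor and $\ih$ with $H=G$ is the identity, so the first and third bullets give $X\otimes\M=\M^{\bx|X|}$ as required.

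Property 3, the associativity-type coherence $X\otimes(Y\otimes C)\cong(X\times Y)\otimes C$, I expect to be the main obstacle, and I would handle it by reduction to orbits. By Property 1 and the distributivity of $\bx$, both sides are compatible with disjoint-union decompositions in $X$ and in $Y$, so it suffices to treat $X=G/K$ and $Y=G/H$. The crux is then to identify $N^G_K\ih[N^G_H\ih\M]$ (restricted and normed appropriately) with $(G/K\times G/H)\otimes\M$, where $G/K\times G/H$ decomposes into orbits by the double-coset formula; since $G$ is a cyclic $p$-group all subgroups are nested, so $G/K\times G/H$ is a disjoint union of copies of $G/\min(H,K)$ and the combinatorics is tractable. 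The essential input is Theorem \ref{composable}, which gives $N^G_KN^K_H\cong N^G_H$ for $H<K<G$, together with the standard compatibility between the norm functors and restriction, i.e. an isomorphism of the form $\ik N^G_H\cong \bigbox N^K_{K\cap H}\ih$ reflecting the decomposition of the restricted $G$-set; tracing the orbit decomposition of $G/K\times G/H$ through the transitivity isomorphism of Theorem \ref{composable} should match both sides. I would then check that these orbitwise isomorphisms assemble into a natural isomorphism compatible with the disjoint-union structure, completing the verification that $(-)\otimes(-)$ is a $G$-symmetric monoidal structure.
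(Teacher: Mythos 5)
Your proposal is correct and follows essentially the same route as the paper: reduce each axiom to the orbit case, get Property 1 from the strong symmetric monoidality of $\ngh$ (Theorem \ref{nghsm}), Property 2 from writing a trivial $G$-set as copies of $G/G$, and Property 3 by decomposing $G/K \times G/H$ (nested subgroups, so it is a disjoint union of copies of $G/H$) and combining Theorem \ref{composable} with Theorem \ref{nghsm}. The ``standard'' norm--restriction compatibility $\ik \ngh \ih \M \cong (N^K_H \ih \M)^{\bx |G/K|}$ that you invoke is exactly the step the paper justifies via Theorem \ref{composable} together with the definitional fact that $\ik \ngk$ is the $|G/K|$-fold box product, so your argument matches the paper's in substance as well as in outline.
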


\begin{proof}
Let $\M$ be a $G$-Mackey functor. The above functor $(-)\otimes (-)\co \setiso \times \mackg \to \mackg$ satisfies Property 1 of Definition \ref{gsm} because the norm functors $\ngh$ are strong symmetric monoidal for all subgroups $H$ of $G$.  Further, if $X$ is a finite set, then we can regard it as a disjoint union of $|X|$-many copies of the $G$-orbit $G/G$. Thus, $(-)\otimes (-)$ satisfies Property 2 of Definition \ref{gsm} since
$$X\otimes \M = (G/G\otimes \M)^{\bx|X|} = \M^{\bx|X|}.$$

Finally, to show that Property 3 of Definition \ref{gsm} holds it  suffices to show that $(G/K \times G/H) \otimes \M \cong G/K \otimes (G/H \otimes \M)$ for all orbits $G/H$ and $G/K$ of $G$. We first assume that $H$ is a subgroup of $K$, so $G/K \times G/H$ is isomorphic to $\amalg_{|G/K|} G/H$. Then
$$ (G/K \times G/H)\otimes \M \cong 
  (G/H \otimes \M)^{\bx |G/K|} \cong  N^G_H i_H^*(\M^{\bx |G/K|}).$$ On the other hand, $G/K \otimes (G/H \otimes \M) = \ngk \ik \ngh i_H^* \M$, and using Lemma \ref{lem:nghrestriction},  $\ik\ngh\ih\M$ is isomorphic to $(N^K_H\ih\M)^{\bx|G/K|}$. Then via Theorem \ref{nghsm}, $(N^K_H\ih\M)^{\bx|G/K|}$ is isomorphic to $N^K_H\ih\left(\M^{\bx|G/K|}\right)$, and therefore,
$$G/K\otimes (G/H\times \M) \cong \ngk N^K_H\ih\left(\M^{\bx|G/K|}\right) \cong  \ngh\ih\left(\M^{\bx|G/H|}\right). $$

Next, if $K \le H$ then $(G/K \times G/H) \otimes \M \cong N^G_K i_K^*\left( \M^{\bx |G/H|}\right)$. Moreover, $$G/K \otimes (G/H \otimes \M) = N^G_Ki_K^* \ngh \ih \M.$$ But, $i_K^* \ngh \ih \M = i_K^* \M^{\bx |G/H|}$, and so $G/K \otimes (G/H \otimes \M)= N^G_K i_K^*(\M^{\bx|G/H|})$ as well.
\end{proof}

 To define the commutative ring objects under a $G$-symmetric monoidal structure let $\C$ be a symmetric monoidal category with a $G$-symmetric monoidal structure $(-) \otimes (-)$. Every object $C$ in $\C$ defines a functor $$(-) \otimes C\co \setiso \to \C.$$ 

\begin{defn}\label{gcm} \cite{HillHopkins}  A {\emph{$G$-commutative monoid}} is an object $C$ in $\C$ together with an extension of $(-) \otimes C$ as given below.
$$\xymatrix{ \setiso \ar[r]^-{(-)\otimes C} \ar[d] & \C \\ \set \ar@{-->}[ur]}$$
\end{defn}

We will finish proving Theorem \ref{mainthm2} by showing that if we endow $\mackg$ with the $G$-symmetric monoidal structure defined in Theorem \ref{mainthm}, then a Mackey functor $\M$ is a Tambara functor if and only if it is a $G$-commutative monoid. We start by proving the forward implication in Proposition \ref{TFisGCM} and leave the reverse implication to Proposition \ref{gcmtotf}.

\begin{prop}\label{TFisGCM} Let $\M$ be a $G$-Mackey functor and endow $\mackg$ with the $G$-symmetric monoidal structure of Theorem \ref{mainthm}. If $\M$ has the structure of a Tambara functor, then $\M$ is a $G$-commutative monoid.\end{prop}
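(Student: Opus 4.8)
The plan is to build the required extension of the functor $(-)\otimes\M\co \setiso \to \mackg$ to a functor on all of $\set$ directly from the Tambara structure of $\M$, and then to check functoriality by appealing to the Tambara axioms. Since $(-)\otimes\M$ already carries disjoint unions to box products (Property 1 of Definition \ref{gsm}), and since every finite $G$-set is a disjoint union of orbits while every morphism of $\set$ factors as a disjoint union of maps into single orbits, it suffices to define the extension on two classes of generating morphisms --- the fold maps $Z\amalg Z \to Z$ (together with the unit map $\emptyset \to G/G$) and the orbit collapse maps $\pi\co G/H \to G/K$ for $H\le K$ --- and then to verify that the resulting assignment respects composition and the exponential diagrams.

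First I would dispose of the fold maps, which encode the ordinary commutative monoid structure demanded by Definition \ref{gcm}. Because $\M$ is a Tambara functor it is in particular a commutative Green functor, so there is a multiplication $\M\bx\M \to \M$; this is the image of $G/G \amalg G/G \to G/G$, and the canonical map $\A = \emptyset\otimes\M \to \M$ supplies the unit. For a general orbit $G/H$ one uses that $\ih\M$ is an $H$-Tambara functor and applies the strong monoidal functor $\ngh$ to its multiplication, so that each $X\otimes\M$ becomes a commutative monoid in $(\mackg,\bx)$.

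Next I would define the extension on the orbit collapse maps, which is where the norms enter. By Theorem \ref{composable} and the transitivity of norms (Property \ref{tftrans} of Definition \ref{axiomatictfdef}) it suffices to treat $\pi\co G/H \to G/G$ with $H$ maximal, i.e$.$ to produce a Mackey functor map $\ngh\ih\M \to \M$. Using the description of $\ngh\ih\M$ from Fact \ref{maxglevel}, I would send the free generator $N(m)$ at the top level to the internal norm $\ngh(m)\in\M(G/G)$, send the transfer summand by the internal transfer of $\M$, and define the map at each level $G/H'$ through the internal norm, transfer, and restriction maps so as to commute with $res$ and $tr$. The crucial point is \emph{well-definedness}: the generators of the Tambara reciprocity submodule $TR$ must map to zero. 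For $H$ maximal the generator $N(a+b)-N(a)-N(b)-tr^G_H(g_H(\bm{a},\bm{b}))$ maps to $\ngh(a+b)-\ngh(a)-\ngh(b)-tr^G_H(g_H(a,b))$, which vanishes by Theorem \ref{sumdef} (the intermediate sum over $H<K'<G$ is empty), and the transfer-type generator vanishes by Theorem \ref{normoftrdef}. In short, the relations we quotiented by when constructing $\ngh\ih\M$ are exactly the identities that the internal norm, transfer, and restriction maps of a Tambara functor satisfy, so the structure maps of $\M$ furnish precisely the data the construction requires.

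The main obstacle will be the final step: checking that these assignments cohere into a genuine functor on $\set$. Composites of collapse maps must realize $N^G_KN^K_H = N^G_H$, matched on the construction side by Theorem \ref{composable} and on the internal side by transitivity of norms; fold maps must compose associatively and commutatively via the ring structure; and, most importantly, the interaction of collapse maps with fold maps must send each exponential diagram to a commuting square. This last compatibility is the heart of the argument, but it is forced, since it is exactly the Distributive Law of Definition \ref{origtfdef}, which holds because $\M$ is a Tambara functor. Once this is verified, the extension $\set \to \mackg$ exists and restricts to $(-)\otimes\M$ on $\setiso$, exhibiting $\M$ as a $G$-commutative monoid.
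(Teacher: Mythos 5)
Your proposal is correct in substance, but it takes a genuinely different route from the paper's. The paper never builds the map $\ngh\ih\su \to \su$ by hand: it first extends $\ngh$ to a functor $\nngh\co \tambh \to \tambg$ on Tambara functors (using the morphism $N$ of Definition \ref{defofN} together with the multiplication of $\su$), proves that $\nngh$ is left adjoint to the restriction $\ih\co \tambg \to \tambh$ (Lemma \ref{tfadj}), and then defines the map $G/H\otimes\su \to G/K\otimes\su$ by applying $\mathcal{N}^G_K$ to the counit $\mathcal{N}^K_H i_H^* i_K^*\su \to i_K^*\su$. Your map sending $N(m)\mapsto N^G_H(m)$ and the transfer summand through $tr^G_H$ is precisely this counit made explicit, and your well-definedness check --- the $TR$ generators die by Theorems \ref{sumdef} and \ref{normoftrdef}, the sum over $H<K'<G$ being empty when $H$ is maximal, exactly as in Fact \ref{maxglevel} --- is the content that the paper encapsulates inside the adjunction. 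What the paper's route buys is coherence for free: counits are natural and adjunctions compose, so no further functoriality verification over $\set$ is needed. What your route buys is explicitness and some economy: you never need to know that $\ngh\su$ is itself a Tambara functor, only that it is a Green functor. One small point of vagueness: at levels $G/H'$ with $H'\le H$ your map is not assembled from ``norm, transfer, and restriction'' alone; it is the Weyl-twisted multiplication $m_e\otimes m_\g\otimes\cdots\otimes m_{\g^{|G/H|-1}} \mapsto m_e\cdot \g m_\g \cdots \g^{|G/H|-1}m_{\g^{|G/H|-1}}$, so the Green structure of $\M$ is what is being used there.

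One correction to your final paragraph. For a functor $\set\to\mackg$ there is no condition that ``exponential diagrams go to commuting squares''; that condition lives in Tambara's Definition \ref{origtfdef}, where the three functors $S^*$, $S_*$, $S_\star$ take values on the same objects. What must actually be checked for your extension is compatibility with composition in $\set$, i.e.\ the relations among folds, collapses, and isomorphisms. Concretely: the relation $\pi\circ\nabla = \nabla\circ(\pi\amalg\pi)$ forces your collapse map to be multiplicative (a morphism of Green functors), which holds because internal norms are multiplicative monoid homomorphisms together with Frobenius reciprocity; composites of collapses are handled by Theorem \ref{composable} plus Property \ref{tftrans} of Definition \ref{axiomatictfdef}, as you say; and the norm-of-transfer interaction is not a separate coherence condition at all --- it is already absorbed into your well-definedness check on $TR$. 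So the Distributive Law does enter, but only in the guise of Tambara reciprocity, which you had already invoked; the residual coherence burden is multiplicativity and transitivity, not the exponential diagrams.
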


 To prove Proposition \ref{TFisGCM} we need to show that if $\su$ is a Tambara functor, then a map $X \to Y$ of $G$-sets induces a map $X \otimes \su \to Y \otimes \su$ of Mackey functors. Thus, we will extend the norm functors $\ngh \co \mackh \to \mackg$ to functors $\nngh \co \tambh \to \tambg$ on Tambara functors and show that $\nngh$ is left adjoint to the forgetful functor $\ih \co \tambg \to \tambh$. Then given a Tambara functor $\su$ we will use the counit of the above adjuction along with properties of finite $G$-sets and the fact that the box product is the coproduct in $\tambg$ (\cite{Strickland}) to induce a map $X \otimes \su \to Y \otimes \su$.


\begin{lemma} For all subgroups $H$ of $G$ the functor $\ngh\co \mackh \to \mackg$ extends to a functor $\nngh\co \tambh \to \tambg$. 
\end{lemma}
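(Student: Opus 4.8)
The plan is to equip the $G$-Mackey functor $\ngh\su$ (that is, $\ngh$ applied to the underlying $H$-Mackey functor of an $H$-Tambara functor $\su$) with a commutative ring structure, a full system of internal norm maps, and then to verify the axioms of Definition \ref{axiomatictfdef}. First I would dispose of the ring-theoretic structure. A commutative Green functor is precisely a commutative monoid object in $(\mackh,\bx)$, and a Tambara functor is in particular such a monoid. Since $\ngh$ is strong symmetric monoidal by Theorem \ref{nghsm}, it carries this commutative monoid to a commutative monoid in $(\mackg,\bx)$, which supplies, with no further work, the commutative ring on each $(\ngh\su)(G/P)$, the fact that the restriction maps are ring homomorphisms and the transfers are additive, and Frobenius reciprocity. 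These are exactly the Green-functor portions of Definition \ref{axiomatictfdef} (the relevant halves of Properties 2, 4, and 5). The Weyl actions of Property 1 are already part of Definition \ref{nghweyl}, and, crucially, the restriction-of-norm identities of Property 6 were used to \emph{define} the restriction maps in Definition \ref{resdef}, so they hold by construction once the norms are in place.

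The substance, then, is the internal norm maps $N^P_{P'}$ for $P'<P\le G$, which I would organize into three families. For $P'<P\le H$ the two levels involved are $\su^{\bx|G/H|}(H/P')$ and $\su^{\bx|G/H|}(H/P)$, i.e. the corresponding levels of the box power $\su^{\bx|G/H|}$; there the norm is assembled from the internal norms of $\su$ acting slotwise through the multiplicative structure. For the essential new norm $N^G_H\co(\ngh\su)(G/H)\to(\ngh\su)(G/G)$ I would use the morphism $N$ of Definition \ref{defofN}, which sends $m$ to the free generator $N(m)$ and is multiplicative because the evaluation functors are lax monoidal. Writing a simple tensor $\bfm^{\otimes|G/H|}$ of $\su^{\bx|G/H|}(H/H)=(\ngh\su)(G/H)$ as a product of its single-slot Weyl translates, I define $N^G_H$ as the multiplicative, Weyl-equivariant extension of $N$ over these factors, extended across the transfer summand via Tambara reciprocity for transfers. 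All remaining norms $N^P_{P'}$ with $P'\le H<P$ are then forced by transitivity, $N^P_{P'}=N^P_HN^H_{P'}$ (Property \ref{tftrans}).

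With the norms defined, multiplicativity (Property 2) and Weyl invariance (Property 5) follow from the multiplicative construction and the equivariance of $N$, transitivity (Property 3) holds by fiat, and Property 6 holds as noted above. The crux --- and the step I expect to be the main obstacle --- is Tambara reciprocity (Property 7) together with the well-definedness of $N^G_H$ modulo the Frobenius and Tambara reciprocity submodules. This is exactly what the submodule $TR$ of Definition \ref{nghdef} was engineered to provide: by design the generator $N(\bfm(a+b)_j^{\times|G/K|})$ is identified with $N(\bfm a_j^{\times|G/K|})+N(\bfm b_j^{\times|G/K|})$ plus the prescribed transfers, and similarly for norms of transfers, so the relations of Theorems \ref{sumdef} and \ref{normoftrdef} hold in $\nngh\su$. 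The reciprocity relations valid in $\su$ itself map correctly into these because the polynomials $g_H$, $f$, and the monomials $(\uab)^{K'}_k$ are universally determined by $G$ and involve no repeated factors (Fact \ref{monfacts}). The care required is to check that the multiplicative formula for $N^G_H$ descends to the quotient --- that it respects $FR$ and $TR$ --- and is independent of the chosen factorization into Weyl translates.

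Finally, functoriality of $\nngh$ is routine. A morphism of $H$-Tambara functors is in particular a morphism of underlying Mackey functors, so Theorem \ref{is_a_functor} already produces $\ngh(\phi)$; one checks that it commutes with the ring multiplications and the norm maps just defined, and preservation of identities and composites is inherited from the fact that $\ngh$ is a functor. If desired, Theorem \ref{composable} reduces all of the above to the case in which $H$ is maximal in $G$, where the top level simplifies as in Fact \ref{maxglevel} and the verification of Property 7 is most transparent.
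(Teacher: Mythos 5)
Your skeleton matches the paper's: reduce to $H$ maximal in $G$ via Theorem \ref{composable}, obtain the Green-functor structure from strong symmetric monoidality (Theorem \ref{nghsm}), let the norms between subgroups of $H$ be those carried by the box power $\su^{\bx|G/H|}$ (the paper justifies this by citing that the box product is the coproduct of Tambara functors), and generate the remaining norms by transitivity. The gap is in your definition of the one genuinely new map, $N^G_H\co(\ngh\su)(G/H)\to(\ngh\su)(G/G)$. You define it as the multiplicative, Weyl-equivariant extension of $N$ over the single-slot factorization of a simple tensor, ``extended across the transfer summand via Tambara reciprocity for transfers.'' But a norm map is only a homomorphism of multiplicative monoids, not of additive groups, so a multiplicative extension determines $N^G_H$ only on products of your chosen generators --- that is, on simple tensors and transfers --- and says nothing about general elements of $(\ngh\su)(G/H)$, which are \emph{sums} of these. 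You cannot extend additively (norms are not additive), and extending by decreeing the Tambara-reciprocity-for-sums formula raises exactly the consistency problem you flag and leave open: independence of the chosen decomposition and compatibility with $FR$ and $TR$. As written, the map is not defined on the whole module, and the check you defer is not a routine verification but the actual content of the step.

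The paper resolves this with one device you are missing: since $\su$ is itself a Tambara functor, it has a multiplication morphism $\mu\co\su^{\bx|G/H|}\to\su$ (the iterated Green-functor multiplication), an honest, everywhere-defined map of Mackey functors, and the norm is defined as the composite
$$\su^{\bx|G/H|}(H/H)\xrightarrow{\ \mu\ }\su(H/H)\xrightarrow{\ N\ }(\ngh\su)(G/G).$$
Well-definedness is then automatic: $\mu$ absorbs sums, transfers, and the Frobenius relations inside $\su(H/H)$ before $N$ is ever applied, and on simple tensors the composite agrees with your intended formula $\prod_j N(m_{\g^j})$ by the multiplicativity of $N$. Multiplicativity, Weyl invariance, and Tambara reciprocity of the resulting norm then follow from the ring structure of $\su(H/H)$ together with the $TR$ relations built into $(\ngh\su)(G/G)$, which is the (terse) conclusion of the paper's proof. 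So: right strategy overall, but your construction of $N^G_H$ should be replaced by $N\circ\mu$.
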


\begin{proof} We need to show that for all subgroups $H$ of $G,$ if $\su$ is an $H$-Tambara functor then $\ngh\su$ is a $G$-Tambara functor. However, by Corollary \ref{composable} it suffices to let $H$ be the maximal subgroup in $G$.  Since $\ngh$ is strong symmetric monoidal it naturally extends to a functor $\greenh \to \greeng$ where $\greeng$ is the category of $G$-Green functors. Hence, it remains to define the internal norm maps $N^K_{K'}\co (\ngh \su)(G/K') \to (\ngh \su)(G/K)$ for all subgroups $K'<K$ in $G$. 

Since the box product is the coproduct in $\tambh$  \cite[Prop 9.1]{Strickland}, if both $H'$ and $H''$ are subgroups of $H$ with $H''<H'$, then we define $N^{H'}_{H''}$ to be the $|G/H|$-fold box product of the  norm map  $N^{H'}_{H''}$ in $\su$. Lastly, we must define the norm $\ngh\co \su^{\bx|G/H|}(H/H) \to (\ngh\su)(G/G)$. This norm is the composition of the multiplication map of $\su$ with the map $N \co \su(H/H) \to (\ngh\su)(G/G)$ defined in Remark \ref{DefofN}.
Thus, letting $\mu\co \su^{\bx |G/H|} \to \su$ be the multiplication map of $\su$, we define $\ngh$ to be the composition $$\su^{\bx|G/H|}(H/H) \xrightarrow{\mu}  \su(H/H) \xrightarrow{N} (\ngh \su)(G/G).$$  This composition satisfies Properties 1, 2, 4, and 5 of Definition \ref{axiomatictfdef} by construction of the functor $\ngh\co \mackh \to \mackg$. (Property 3 is Frobenius Reciprocity.) It remains to show that this norm map satisfies Tambara Reciprocity (Property 6 of Definition \ref{axiomatictfdef}). 

First, we will show that $\ngh$ satisfies Tambara Reciprocity for sums (Corollary \ref{cor:TRsums}). Let $\botimes a_j = a_0 \otimes \cdots \otimes a_{|G/H|-1}$ and $\botimes b_j = b_0 \otimes \cdots \otimes b_{|G/H|-1}$ be simple tensors in $\su^{\bx |G/H|}(H/H)$. We need to show that $$\ngh(\botimes a_j + \botimes b_j) = \ngh(\botimes a_j) + \ngh(\botimes b_j) + tr^G_H(g_H(\botimes a_j,\botimes b_j)),$$ where $$g_H(\botimes a_j,\botimes b_j) = \sum_{f \in \mathcal I_k/G} \left(\prod_{gH\in G/H} gf(gH)\right) =\sum_{f \in \mathcal I_k/G} \left(\prod_{i=0}^{|G/H|-1} \g^i f(\g^i H)\right),$$ and $$\mathcal I_k = \left(Map\left(G/H, \left\{\botimes a_j, \botimes b_j\right\}\right) - Map\left(G/G, \left\{\botimes a_j, \botimes b_j\right\}\right)\right).$$ 

Now, in $\nngh \su$, 
\begin{eqnarray*}
\ngh\left(\botimes a_j + \botimes b_j\right) &=& 
 (N\circ \mu) \left(\botimes a_j + \botimes b_j\right) \\ &=& 
  N(a_0a_{1}\cdots a_{|G/H|-1} + b_0b_{1}\cdots b_{|G/H|-1}).
  \end{eqnarray*}
Let $a_0a_{1}\cdots a_{|G/H|-1}=\prod a_j $ and $b_0b_{1}\cdots b_{|G/H|-1}=\prod b_j $. Since we quotient by the Tambara reciprocity submodule $TR$ in $(\nngh\su)(G/G)$, it follows that 
$$N\left(\prod a_j + \prod b_j \right) = N\left(\prod a_j\right) + N\left(\prod b_j\right) + tr^G_H\left(g_H\left({\bm{\prod a_j}},{\bm{\prod b_j}}\right)\right),$$ and $g_H\left({\bm{\prod a_j}},{\bm{\prod b_j}}\right) = \sum_{f \in \mathcal I_k/G} \left(\botimes_{i=0}^{|G/H|-1}  f(\g^i H)\right)$ where now $$\mathcal I_k = \left(Map\left(G/H, \left\{\prod a_j, \prod b_j\right\}\right) - Map\left(G/G, \left\{\prod a_j, \prod b_j\right\}\right)\right).$$  

Further, since in $(\nngh \M)(G/G)$, $tr^G_H(g_H\left({\bm{\prod a_j}},{\bm{\prod b_j}}\right)$ lies in $\M^{\bx |G/H|}(H/H)/_{W_G(H)}$, it follows that we can write each $\botimes_{i=0}^{|G/H|-1}  f(\g^i H)$ as $$f(H)f(\g H) \cdots f(\g^{|G/H|-1}H)\otimes 1^{\otimes |G/H|-1}.$$ Therefore, $tr^G_H\left(g_H\left({\bm{\prod a_j}},{\bm{\prod b_j}}\right)\right)=tr^G_H(g_H(\botimes a_j,\botimes b_j))$, and so the norm map satisfies Tambara reciprocity for sums.

If we employ a strategy analogous to the one above we can show that the norm $\ngh$ in $\nngh \su$ also satisfies Tambara reciprocity for transfers (Corollary \ref{cor:TRtransfers}). Specifically, because we quotient by the $TR$ submodule in $(\nngh \su)(G/G)$ and by the appropriate Weyl action in the image of each transfer map, it follows that $$\ngh tr^H_{H'}(x) = N(tr^H_{H'}(\mu(x)) = tr^G_{H'}(f({\bm{\mu(x)}}) = tr^G_{H'}(f(x)). $$
Therefore, $\nngh \su$ is a Tambara functor.
\end{proof}

\begin{lemma}\label{tfadj} The functor $\nngh\co \tambh \to \tambg$ is left adjoint to the restriction functor $\ih\co \tambg \to \tambh$.
\end{lemma}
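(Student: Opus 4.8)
The plan is to exhibit, naturally in both variables, a bijection
$$\mathrm{Hom}_{\tambg}(\nngh\su, \R) \;\cong\; \mathrm{Hom}_{\tambh}(\su, \ih\R)$$
for every $H$-Tambara functor $\su$ and every $G$-Tambara functor $\R$; this is precisely the data of the asserted adjunction. Arguing by induction on $|G|$ and using that a composite of left adjoints is left adjoint to the composite of the corresponding right adjoints, I would first reduce to the case in which $H$ is maximal: by Theorem \ref{composable} the norm functors compose as $\nngh \cong \nngj \circ \mathcal{N}^J_H$ with $J$ maximal, while restriction composes in the opposite order as $\ih = (i^J_H)^*(i^G_J)^*$, so the general statement follows from the maximal case $\nngj \dashv (i^G_J)^*$ together with the inductive hypothesis $\mathcal{N}^J_H \dashv (i^J_H)^*$. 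In the maximal case the only nontrivial norm level is $G/G$, described by Fact \ref{maxglevel}, which is what keeps the bookkeeping tractable.

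Next I would build the forward assignment $f \mapsto \tilde f$. A map $f\colon \su \to \ih\R$ consists of ring maps $f_{H'}\colon \su(H/H') \to \R(G/H')$ for $H'\le H$ commuting with the $H$-level structure and the $W_H$-actions. For $H'\le H$ I would define $\tilde f_{H'}\colon \su^{\bx|G/H|}(H/H') \to \R(G/H')$ on a simple tensor by
$$\tilde f_{H'}\big(\bfm^{\otimes|G/H|}\big) = f_{H'}(m_e)\cdot \g f_{H'}(m_{\g}) \cdots \g^{|G/H|-1} f_{H'}(m_{\g^{|G/H|-1}}),$$
using commutativity of $\R(G/H')$; the Weyl formula \eqref{gheqn} together with the $W_H$-equivariance of $f$ shows this is $W_G(H')$-equivariant. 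At the top level I would use the free-and-transfer presentation of $(\ngh\su)(G/G)$ from Fact \ref{maxglevel}: send a free generator $N(a)$ to $N^G_H(f_H(a))$, and send the transfer summand by $tr^G_H(x)\mapsto tr^G_H(\tilde f_H(x))$, a rule forced by compatibility with transfers (and well defined on the Weyl quotient since $\tilde f_H$ is Weyl-equivariant).

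The main obstacle, and the step where the construction of $TR$ pays off, is verifying that $\tilde f$ is well defined and is a morphism of Tambara functors. Well-definedness at $G/G$ is the assertion that $\tilde f$ annihilates the Tambara reciprocity submodule, whose generators (Fact \ref{maxglevel}) mirror Theorems \ref{sumdef} and \ref{normoftrdef}. Applying $\tilde f$ carries $N(a+b)-N(a)-N(b)-tr^G_H(g_H(\bm{a},\bm{b}))$ to $N^G_H(f_H(a)+f_H(b)) - N^G_H(f_H(a)) - N^G_H(f_H(b)) - tr^G_H(g_H(\cdots))$, which vanishes because $\R$ genuinely satisfies Tambara reciprocity for sums, and likewise $N(tr^H_{H'}(x)) - tr^G_{H'}(f(\bm{x}))$ vanishes by reciprocity for transfers; since the polynomials $g_H$ and $f$ are universally determined by $G$ (Fact \ref{monfacts}), the two sides agree monomial by monomial. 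Compatibility of $\tilde f$ with restriction then uses Definition \ref{resdef} together with the identity $res^G_H N^G_H(x) = \prod_{\g^s} \g^s x$ in $\R$, compatibility with the internal norm is immediate from the definition of the norm of $\nngh\su$ via $\mu$ and $N$ (Definition \ref{defofN}), and ring-, Frobenius-, and transfer-compatibility are routine.

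Finally I would produce the inverse. Given $g\colon \nngh\su \to \R$, I would set $f_{H'}$ to be $g_{H'}$ precomposed with the unit inclusion $\su(H/H')\hookrightarrow \su^{\bx|G/H|}(H/H')$, $a\mapsto a\otimes 1^{\otimes|G/H|-1}$, which is an $H$-Tambara map because $\su$ is a commutative monoid for $\bx$. A short computation on generators shows $f\mapsto\tilde f\mapsto f$ and $g\mapsto f\mapsto \tilde f = g$, the latter because a $G$-Tambara map out of $\nngh\su$ is determined by its values on the free generators and the transfer summand and $\tilde f_G(N(a))=N^G_H(f_H(a))$. Naturality in $\su$ and $\R$ is then immediate, and the unit and counit of the adjunction can be read off as $\eta_{\su}=\widetilde{\mathrm{id}}^{\,\vee}$ and $\epsilon_{\R}=\widetilde{\mathrm{id}}$, completing the proof.
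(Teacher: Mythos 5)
Your proposal is correct and follows essentially the same route as the paper: reduction to the case of maximal $H$ via Theorem \ref{composable} and composition of adjunctions, followed by a hom-set bijection resting on the same two structural facts — that each generator $N(s)$ of the free summand of $(\nngh\su)(G/G)$ is the norm of $s\otimes 1^{\otimes |G/H|-1}$, and that simple tensors decompose as products of Weyl conjugates, yielding exactly your product formula $\prod_j \g^j f_{H'}(m_{\g^j})$. The only difference is one of direction: the paper starts from a morphism $\nngh\su \to \R$ and shows it is determined by (and determines) its $H$-level data via the universal property of the box product, whereas you build the unit and counit explicitly and verify that the Tambara reciprocity submodule is annihilated using the universality of the polynomials $g_H$ and $f$ — a well-definedness check that the paper's direction of argument leaves implicit.
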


\begin{proof} Since we can compose adjunctions in a natural fashion \cite{maclane}, by Corollary \ref{composable}, it suffices to let $H$ be maximal in $G$. Let $\R$ be in $\tambg$ and $\su$ be in $\tambh$. Further, let $\tambh(\su,\ih\R)$ be the set of morphisms from $\su$ to $\ih\R$ in $\tambh$. We will show that $ \tambh(\su, \ih\R)$ is in natural bijective correspondence with  $\tambg(\nngh \su,\R)$ by showing that every morphism in $\tambg(\nngh\su,\R)$ determines and is determined by a morphism in $\tambh(\su,\ih\R)$.

 A morphism $\Omega$ in $\tambg(\nngh\su,\R)$ consists of a collection of ring homomorphisms $\{\Omega_P\co (\nngh\su)(G/P) \to \R(G/P) \text{ for all } P\le G\}$ that commute with the appropriate restriction, transfer, and norm maps.
Further, every element in $(\nngh \su)(G/G)$ is a sum consisting of elements in the image of the transfer map and sums of elements in the image of the norm map. (Indeed, every generator $N(s)$ in the image of the free summand $\Z\{\su(H/H)\}$  of $(\nngh \su)(G/G)$ is the norm of the element $s\otimes 1^{\otimes |G/H|-1}$ in $(\nngh \su)(G/H)$.) Thus,  the ring homomorphism $\Omega_G$ is completely determined by $\Omega_H$, and since $H$ is maximal in $G,$  the morphism $\Omega$ is completed determined by the collection of ring homomorphisms $$\{\Omega_{H'}\co \su^{\bx |G/H|}(H/H') \to \R(G/H') \text{ for all } H' \le H\}.$$

By Proposition~\ref{boxprodmaps},  the above collection of maps determines and is determined by a collection of Weyl equivariant maps $$\{\theta_{H'}\co \su(H/H')^{\otimes |G/H|} \to \R(G/H') \text{ for all } H' \le H\}$$ that satisfies the compatibility conditions given in Proposition~\ref{boxprodmaps}. 
But we can write every $\bm{s}^{\otimes |G/H|}$  in $\su(H/H')^{\otimes |G/H|}$ as a product over the $W_G(H')$-action. Thus, each $\theta_{H'}$ determines and is determined by a $W_G(H')$-equivariant homomorphism $\Lambda_{H'}\co \su(H/H') \to \R(G/H')$ because we can write $\theta_{H'}({\bm{s}}^{\otimes |G/H|})$ as the following product.
\begin{eqnarray*}
\theta_{H'}({\bm{s}}^{\otimes |G/H|}) &=& \theta_{H'}\left[\prod_{i=0}^{|G/H|-1} \g^i  \left(s_{\g^i}\otimes1^{\otimes|G/H|-1}\right)\right] \\
&=& \prod_{i=0}^{|G/H|-1} \g^i  \theta_{H'} \left(s_{\g^i}\otimes1^{\otimes|G/H|-1}\right) \\
&=& \prod_{i=0}^{|G/H|-1} \g^i  \Lambda_{H'}(s_{\g^i}).
\end{eqnarray*}

Lastly, we show that $\Omega$ is well-defined by showing that $\Omega_G$ sends the $TR$ submodule of $(\nngh\su)(G/G)$ to zero. Consider the element $N(a+b)-N(a)-N(b)-tr^G_H(g_H({\bm{a}},{\bm{b}}))$ in $TR$. Then $\Omega_G(N(a+b)-N(a)-N(b)-tr^G_H(g_H({\bm{a}},{\bm{b}})))$ equals 
\begin{eqnarray*}\lefteqn{ \ngh\Omega_H\left((a+b)\otimes 1^{\otimes |G/H|-1}\right) -} \\&&  \ngh\Omega_H\left(a\otimes 1^{\otimes |G/H|-1}\right) - \ngh\Omega_H\left(b\otimes 1^{\otimes |G/H|-1}\right)- tr^G_H\Omega_H\left(g_H({\bm{a}},{\bm{b}})\right),
\end{eqnarray*}
and we must show that this element is zero in $\R(G/G)$. First, 
$$\ngh\Omega_H\left((a+b)\otimes 1^{\otimes |G/H|-1}\right) = \ngh(\Lambda_H(a+b))
=\ngh \left(\Lambda_H(a) + \Lambda_H(b)\right).$$
Then since $\R$ is a Tambara functor, by Tambara reciprocity for sums, $$\ngh \left(\Lambda_H(a) + \Lambda_H(b)\right) - \ngh \left(\Lambda_H(a) \right) - \ngh \left( \Lambda_H(b)\right) - tr^G_H(g_H(\Lambda_H(a),\Lambda_H(b)))=0.$$ Thus, it remains to show that $$tr^G_H\left(\Omega_H\left(g_H({\bm{a}},{\bm{b}})\right)\right) = tr^G_H\left(g_H\left(\Lambda_H(a),\Lambda_H(b)\right)\right).$$

But, $g_H(\ba,\bb) = \sum_{f \in \mathcal I_k} \left(\bigotimes_{i=0}^{|G/H|-1} f(\g^i H)\right)$, and since $\nngh \su$ is a Tambara functor (and thus $(\nngh\su)(G/H)$ has a multiplication), it follows that $$\botimes_{i=0}^{|G/H|-1} f(\g^i H) = \prod_{i=0}^{|G/H|-1} \g^i \left(f(\g^iH)\otimes 1^{\otimes |G/H|-1}\right).$$ Thus,
\begin{eqnarray*}
tr^G_H\Omega_H\left(g_H(\ba,\bb)\right)&=& tr^G_H\Omega_H \left(\sum_{f \in \mathcal I_k/G}\left(\prod_{i=0}^{|G/H|-1} \g^i \left(f(\g^iH)\otimes 1^{\otimes |G/H|-1}\right)\right)\right) \\
&=& tr^G_H\left(\sum_{f \in \mathcal I_k/G}\left(\prod_{i=0}^{|G/H|-1}\g^i \Lambda_H(f(\g^i H))\right)\right) \\
&=& tr^G_H\left(g_H(\Lambda_H(a),\Lambda_H(b))\right).
\end{eqnarray*}

Next we show that $\Omega_G\left(N(tr^H_{H'}(x)) - tr^G_{H'}(f({\bm{x}}))\right)=0$. First, $$\Omega_G\left(N(tr^H_{H'}(x)) - tr^G_{H'}(f({\bm{x}}))\right)= \ngh\Omega_H\left(tr^H_{H'}(x)\otimes 1^{\otimes |G/H|-1}\right) - tr^G_{H'}\Omega_{H'}(f({\bm{x}})),$$ and $$\ngh\Omega_H\left(tr^H_{H'}(x)\otimes 1^{\otimes |G/H|-1}\right) = \ngh \Lambda_H(tr^H_{H'}(x)) = \ngh tr^H_{H'}(\Lambda_{H'}(x)).$$ Since $\R$ is a Tambara functor it follows that $$\ngh tr^H_{H'}(\Lambda_{H'}(x)) - tr^G_{H'}(f(\Lambda_{H'}(x)))=0,$$ and so we need to show that $\Omega_{H'}(f({\bm{x}}))= f(\Lambda_{H'}(x))$.

As detailed in Corollary~\ref{cor:TRtransfers}, $f(\Lambda_{H'}(x)) = \sum_{s=1}^r \left(\prod_{i=0}^{|G/H|-1} \g^i \g^{m_{i,s}} \Lambda_{H'}(x)\right)$. Further, 
\begin{eqnarray*} 
\Omega_{H'}(f({\bm{x}}) )&=& \sum_{s=1}^r \theta_{H'}\left(\g^{m_{0,s}}x \otimes \g^{m_{1,s}} x \otimes \cdots \otimes \g^{m_{|G/H|-1,s}}x\right)_s \\
&=& \sum_{s=1}^r \bigg(\prod_{i-0}^{|G/H|-1}\g^i \Lambda_{H'}(\g^{m_{i,s}} x)\bigg).
\end{eqnarray*}
Since $\Lambda_{H'}$ is Weyl equivariant it follows that $\Omega_{H'}(f({\bm{x}}))= f(\Lambda_{H'}(x))$. Therefore, since $\Omega_G$ maps all elements of the Tambara reciprocity submodule of $(\nngh \su)(G/G)$ to zero, $\Omega$ is well defined.
\end{proof}

We now prove Proposition \ref{TFisGCM}.
\begin{proof}[Proof of Proposition \ref{TFisGCM}] 
Let $\su$ be a $G$-Tambara functor. To show that $\su$ is a $G$-commutative monoid we first show that a map of orbits $G/H \to G/K$ induces a map $G/H\otimes \su \to G/K\otimes \su$. Consider the $K$-Tambara functor $i_K^*\su$.  By Lemma \ref{tfadj}  there is an adjunction between $\mathcal{N}^K_H$ and $\ih$, and hence a counit map $\mathcal{N}^K_H i_H^* i_K^*\su \to i_K^* \su$. To define $G/H\otimes \su \to G/K\otimes \su$ we apply $\mathcal{N}^G_K$ to the above counit map, which yields a map $\mathcal{N}^G_K \mathcal{N}^K_H i_H^* i_K^* \su \to \mathcal{N}^G_K i_K^* \su$. Since $\mathcal{N}^G_K\mathcal{N}^K_H$ is isomorphic to $\nngh$ and $i_H^*i_K^*$ is isomorphic to $\ih$ it follows that the above map is a map $\nngh \ih \su  \to \mathcal{N}^G_K i_K^* \su$.

If $X \to Y$ is a map of arbitrary $G$-sets we define the induced map $X \otimes \su \to Y \otimes \su$ as follows. First, we write $X$ and $Y$ as disjoint unions of orbits, so $X \cong \coprod_i G/H_i$ and $Y \cong \coprod_j G/K_j$. Thus, the map $X \to Y$ consists of a combination of fold maps, automorphisms, and canonical maps $G/H_i \to G/K_j$ where $H_i$ is a subgroup of $K_j$. Further, $$X \otimes \su \cong \Big(\coprod_i G/H_i \Big) \otimes \su \cong \bx_i (G/H_i \otimes \su) = \bx_i \mathcal{N}^G_{H_i} i_{H_i}^* \su,$$ and similarly, $Y \otimes \su$ is isomorphic to $\bx_j \mathcal{N}^G_{K_j} i_{K_j}^* \su$.

Thus, because the box product is the coproduct in $\tambg$, the induced map $X \otimes \su \to Y \otimes \su$ is a map $$\bx_i \mathcal{N}^G_{H_i} i_{H_i}^* \su \to \bx_j \mathcal{N}^G_{K_j} i_{K_j}^* \su$$ that is a combination of multiplication maps, Weyl actions, and maps $\mathcal{N}^G_{H_i}i_{H_i}^*\su \to \mathcal{N}^G_{K_j}i_{K_j}^*\su$ induced from the counit map.
\end{proof}

It remains the prove the following statement.

\begin{prop}\label{gcmtotf} If $\mackg$ has the $G$-symmetric monoidal structure of Theorem \ref{mainthm} and  $\M$ is a $G$-commutative monoid in $\mackg$, then $\M$ is a $G$-Tambara functor.
\end{prop}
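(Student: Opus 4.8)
The plan is to reverse-engineer the $G$-Tambara structure on $\M$ directly from the extension. Write $F\co \set \to \mackg$ for the functor extending $(-)\otimes\M$ that witnesses $\M$ as a $G$-commutative monoid (Definition \ref{gcm}); thus $F(G/H)=\ngh\ih\M$ and $F$ agrees with the $G$-symmetric monoidal structure of Theorem \ref{mainthm} on every isomorphism. I will verify the axiomatic Definition \ref{axiomatictfdef}. The ring, restriction, and transfer data come for free: applying $F$ to the fold map $\nabla\co G/G\amalg G/G\to G/G$ and to the unit $\emptyset\to G/G$ yields a multiplication $\M\bx\M\to\M$ and a unit $\A\to\M$, and functoriality together with the coherences among fold and unit maps in $\set$ makes $\M$ a commutative monoid for $\bx$, i.e.\ a $G$-Green functor. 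Since commutative $\bx$-monoids are exactly Green functors, this immediately supplies Property 1 (the Weyl actions), the ring-homomorphism clause of Property 2, Frobenius reciprocity (Property 4), and the Weyl-invariance of restriction in Property 5.

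It remains to produce the norm maps and check the norm axioms. For $H\le K$ let $\pi^K_H\co G/H\to G/K$ be the canonical projection of orbits. When $K=G$ I define $N^G_H(x):=F(\pi^G_H)_{G/G}\!\left(N(x)\right)$ for $x\in\M(G/H)=(\ih\M)(H/H)$, where $N\co(\ih\M)(H/H)\to(\ngh\ih\M)(G/G)$ is the generator map of Definition \ref{defofN}. For general $K$ I first note that restriction along $K\le G$ carries the $G$-commutative monoid $\M$ to a $K$-commutative monoid $\ik\M$ (the $K$- and $G$-symmetric monoidal structures being matched by Theorem \ref{composable}), and then take $N^K_H$ to be the top-level norm of $\ik\M$; transitivity of norms (Property 3) is then forced by functoriality of $F$ together with the composability isomorphism $\ngh\cong\ngk N^K_H$ of Theorem \ref{composable}. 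That $N^K_H$ is a homomorphism of multiplicative monoids (Property 2) follows because $N$ is multiplicative---the ring structure on $(\ngh\ih\M)(G/G)$ is arranged so that $N(x)N(y)=N(xy)$ and $N(1)=1$, as $\ngh$ is strong symmetric monoidal (Theorem \ref{nghsm}) and hence preserves the Green structure of $\ih\M$---while $F(\pi^G_H)_{G/G}$ is a ring homomorphism.

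The remaining clauses of Properties 5 and 6 come from applying $F$ to distinguished $G$-maps. For Weyl-invariance of the norm, each $\g^s\in W_G(H)$ gives an automorphism $c_{\g^s}\co G/H\to G/H$ of $G$-sets with $\pi^G_H\circ c_{\g^s}=\pi^G_H$; since $F(c_{\g^s})$ is the Weyl symmetry of $\ngh\ih\M$ sending $N(x)\mapsto N(\g^s x)$ (Definition \ref{nghweyl}), functoriality gives $N^G_H(\g^s x)=N^G_H(x)$. For $res^G_H N^G_H(x)=\prod_{\g^s\in W_G(H)}\g^s x$, I use that $F(\pi^G_H)$ is a map of Mackey functors, so it commutes with $res^G_H$; by Definition \ref{resdef} one has $res^G_H N(x)=x\otimes\g x\otimes\cdots\otimes\g^{|G/H|-1}x$ in $(\ngh\ih\M)(G/H)=(\ih\M)^{\bx|G/H|}(H/H)$, and the $H$-restriction of $F(\pi^G_H)$ is the $|G/H|$-fold Green multiplication of $\ih\M$ (because $i_H^*\pi^G_H$ is the fold map of $|G/H|$ copies of $H/H$). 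Composing yields the required product over the Weyl action.

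The crux, and the main obstacle, is Tambara reciprocity (Property 7), and here the construction pays off. By Fact \ref{maxglevel} the module $(\ngh\ih\M)(G/G)$ is a quotient by the Tambara reciprocity submodule $TR$, so the identities $N(a+b)-N(a)-N(b)-\sum tr^G_{K'}(\cdots)-tr^G_H(g_H({\bm{a}},{\bm{b}}))=0$ and $N(tr^H_{H'}(x))-tr^G_{H'}(f({\bm{x}}))=0$ already hold there. Applying the Green-functor homomorphism $F(\pi^G_H)_{G/G}$, which commutes with transfers and carries $N(\cdot)$ to the norms just defined, transports these identities verbatim into $\M$ and reproduces exactly the formulas of Theorems \ref{sumdef} and \ref{normoftrdef}. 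What remains is bookkeeping: checking that $F(\pi^G_H)$ sends each transfer-summand generator $N\!\left((\baj\bbj)^{K'}_k\right)$ to the matching transfer of the lower norm $N^{K'}_H$ in $\M$, so that the universally determined monomials land correctly, and verifying the general-$K$ reduction through $\ik\M$. I expect the genuinely delicate point to be establishing that restriction $\M\mapsto\ik\M$ really yields a $K$-commutative monoid whose norms agree with those extracted at the top; granting this compatibility, all of Definition \ref{axiomatictfdef} follows, and $\M$ is a $G$-Tambara functor.
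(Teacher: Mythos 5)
Your overall strategy coincides with the paper's: extract the Green structure from the fold and unit maps, define $N^G_H$ as the composite $\M(G/H)\xrightarrow{N}(\ngh\ih\M)(G/G)\xrightarrow{\pi^*_G}\M(G/G)$, reduce general $K$ to the top level via $\ik\M$, get transitivity from Theorem \ref{composable}, and get Tambara reciprocity for free from the $TR$ submodule. However, there is a genuine gap in your verification of Properties 5 and 6, and it sits exactly where the paper has to do real work. You cite Definition \ref{nghweyl} for the claim that $F(c_{\g^s})$ sends $N(x)\mapsto N(\g^s x)$, and Definition \ref{resdef} for the formula $res^G_H N(x)=x\otimes\g x\otimes\cdots\otimes\g^{|G/H|-1}x$. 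Neither definition delivers these statements. The Weyl action of Definition \ref{nghweyl} on $\ngh\ih\M$ only permutes tensor and Cartesian factors; it is blind to the $W_G(H)$-action on $\M(G/H)$ inherited from $\M$, because the construction $\ngh$ is applied to the underlying $H$-Mackey functor, for which $W_H(H)$ is trivial at level $H/H$. In particular, under Definition \ref{resdef} as stated one gets $res^G_H N(x)=x\otimes x\otimes\cdots\otimes x$ with no Weyl twisting, and feeding that into the fold multiplication would produce $x^{|G/H|}$ rather than $\prod_{\g^s\in W_G(H)}\g^s x$ --- so Property 6 would fail under the identifications you are using.

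The paper closes this gap with two results you did not anticipate: Proposition \ref{newaction}, which defines an alternate ``integrated'' Weyl action on $\ngh\ih\M$ (permuting factors \emph{and} acting on each factor through the $G$-structure of $\M$, with a correspondingly twisted restriction formula under which your displayed formula for $res^G_H N(x)$ becomes true), and Theorem \ref{compositioniso}, which shows that $\ngh\ih\M$ with this integrated action is isomorphic as a $G$-Mackey functor to $\ngh\ih\M$ with the standard action, so that substituting one for the other is legitimate. Without these two steps, your appeals to functoriality of $F$ along the automorphisms $c_{\g^s}$ and along $\pi^G_H$ do not yield $N^G_H(\g^s x)=N^G_H(x)$ or $res^G_H N^G_H(x)=\prod_{\g^s\in W_G(H)}\g^s x$; the identification of $(\ih\M)(H/H)$ with $\M(G/H)$ is not equivariant for the action your formulas require. (Your closing worry --- compatibility of the norms extracted from $\ik\M$ with those at the top --- is comparatively routine and is what the paper disposes of in a sentence; the Weyl-action issue is the actual delicate point.)
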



 We show that if a Mackey functor $\M$ is a $G$-commutative monoid then it is a commutative Green functor using the basic properties of the functor $(-)\otimes (-)$. Defining the internal norm maps $N^K_H$ for all $H < K \le G$ that make $\M$ into a Tambara functor takes more work. However, we will only define the norm map $\ngh$ since any norm map $N^K_H$ in a $G$-Tambara functor $\su$ must agree with the analogous norm map in $i_K^*\su$.

 To define the norm $\ngh$ we first recognize that the Mackey functor $\ih\M$ is an $H$-Mackey functor that maintains the Weyl action defined on $\M$. Hence, $\ngh\ih\M$ should also remember this Weyl action, and in particular, for all subgroups $H'$ of $H$, $(\ih\M)(H/H')$ is isomorphic to $\M(G/H')$. Then since $\M$ is a $G$-commutative monoid the map of orbits $G/H \to G/G$ induces a map $\ngh\ih\M \to \M$. Therefore, we define the norm map $\ngh$ to be the composition below where $N$ is the map from Remark \ref{DefofN}. $$\M(G/H) \xrightarrow{N} (\ngh\ih\M)(G/G) \xrightarrow{} \M(G/G)$$ 
 
 In the proof of Proposition \ref{gcmtotf} we show that this composition satisfies all of the properties of a norm in a Tambara functor. But, in order to show that this composition specifically satisfies Property 6 of Definition \ref{axiomatictfdef} we first need to redefine $\ngh\ih\M$ with an alternate Weyl action and restriction map and then show that this new Mackey functor is isomorphic to the original. Specifically, to show Property 6 of Definition \ref{axiomatictfdef} we use the fact that $\ih\M$ maintains the Weyl action on $\M$, and hence there is a Weyl action on $\ngh\ih\M$ that combines this action with the original action.
 Moreover, since the restriction must map into the Weyl fixed points, using this alternate Weyl action requires us to tweak the restriction map as well.

Before stating this new Weyl action we provide further details regarding the original Weyl action and restriction maps of $\ngh\M$.

\begin{fact}[Explicit description of the Weyl action on $\ngh\M$]\label{nghweyl}  The Weyl action on $\ngh\M$ is as follows. When $H'\le H$, $(\ngh\M)(G/H')$ is $ \M^{\bx |G/H|}(H/H')$, and thus, the generator $\g$ of $W_G(H')$ acts on a simple tensor by  
\begin{eqnarray*}\label{gheqn}\small
 \g  (\bfm^{\otimes |G/H|}) &=& \left(\g^{|G/H|}  m_{\g^{|G/H|-1}}\right) \otimes m_e \otimes m_{\g} \otimes \cdots \otimes m_{\g^{|G/H|-2}},
 \end{eqnarray*} where $\g^{|G/H|}$ generates $W_H(H')$.
 
For all subgroups $K$ such that $H<K \le G$, the generator $\g$ of $W_G(K)$ acts on the generator $N(\bfm^{\times|G/K|})$ in $(\ngh\M)(G/K)$ by cyclically permuting the factors of $\bfm^{\times|G/K|}$ in the following way:
\begin{eqnarray*}
 \g  N\left(\bfm^{\times|G/K|}\right) &=& N(\g  (m_e\times m_{\g} \times \cdots \times m_{\g^{|G/K|-1}})) \\
  &=&  N( m_{\g^{|G/K|-1}} \times m_{e} \times m_{\g} \times \cdots \times m_{\g^{|G/K|-2}}).
 \end{eqnarray*}\end{fact}
 
\begin{fact}[Explicit description of the restriction maps of $\ngh\M$]\label{resdef} Recall that $H=C_{p^k}$, and let $K=C_{p^j}$ be a subgroup of $G$. If $j\le k$ then $res^j_{j-1}$ is the box product restriction of Definition~\ref{bxproddef}. If $j>k$ then for $tr(x)$ in $Im(tr^j_{j-1})$, $$res^j_{j-1}(tr(x)) = \sum_{g \in W_K(C_{p^{j-1}})} g  x,$$ and for a generator $N(\bfm^{\times |G/K|})$ in the image of the free summand in $(\ngh\M)(G/K)$,  $$res^j_{j-1} (N(\bfm^{\times|G/K|})) = \begin{cases} N\left(\prod_{i=0}^{p-1}\bfm^{\times|G/K|}\right) & \text{ if } j-1>k \\ \botimes_{i=0}^{p-1}\bfm^{\otimes |G/K|} & \text{ if } j-1=k \end{cases},$$ where  $\prod_{i=0}^{p-1} \left(\bfm^{\times |G/K|}\right)$ is the $p$-fold Cartesian product of $\bfm^{\times|G/K|}$ and $\bigotimes_{i=0}^{p-1} \bfm^{\otimes |G/K|}$ is the analogous tensor product.
\end{fact}

\begin{prop}\label{newaction} Let $\M$ be a $G$-Mackey functor. There is a Weyl action on $\ngh\ih\M$ that combines the Weyl action given in Fact \ref{nghweyl} with the Weyl action defined on $\M$. \end{prop}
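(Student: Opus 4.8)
The whole of the extra symmetry comes from the identification $(\ih\M)(H/H')\cong\M(G/H')$. As an $H$-Mackey functor, $\ih\M$ only records the $W_H(H')$-action that Definition~\ref{nghweyl} feeds into the wrap-around twist $\g^{|G/H|}$, but the underlying module $\M(G/H')$ already carries the genuine $W_G(H')$-action coming from the $G$-Mackey structure of $\M$. The plan is to make this action available functorially and then blend it with the shuffle of Definition~\ref{nghweyl}. Since $G$ is abelian, every $\g\in G$ normalises $H$, so conjugation by $\g$ is an automorphism $c_\g$ of the $H$-Mackey functor $\ih\M$, acting on each level $(\ih\M)(H/H')\cong\M(G/H')$ by the Weyl element $\g$. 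By Theorem~\ref{is_a_functor} the functor $\ngh$ converts each $c_\g$ into an automorphism $\ngh(c_\g)$ of $\ngh\ih\M$ that acts factorwise: it sends $\bfm^{\otimes|G/H|}$ to $(\g m_e)\otimes\cdots\otimes(\g m_{\g^{|G/H|-1}})$ at a level $H'\le H$, and applies $\g$ to each factor of $\M(H/H)\cong\M(G/H)$ in a generator $N(\bfm^{\times|G/K|})$ at a level $K$ with $H<K\le G$. Crucially, because $\ngh(c_\g)$ is produced by the functor $\ngh$, it automatically commutes with every restriction and transfer map and preserves both the Frobenius- and Tambara-reciprocity submodules; this is the technical dividend of building the factorwise action through Theorem~\ref{is_a_functor} rather than by hand.

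I would then define the new action level by level, combining the factorwise twists $\ngh(c_\g)$ with the intrinsic shuffle of Definition~\ref{nghweyl}. The governing constraint is that at each orbit $G/P$ the outcome must be an action of the cyclic group $W_G(P)$, hence of order exactly $|W_G(P)|$; this is what dictates how much of the factorwise $\g$-action each level can absorb. At the levels $H'\le H$ the generator of $W_G(H')$ acts by the shuffle together with the power of the factorwise $\g$-action needed to expose the genuine Weyl action on $\M(G/H')$, and here one checks the prescription has order $|W_G(H')|$: after $|G/H'|$ iterations the shuffle returns to the identity and each factor is acted on by a power of $\g$ lying in $H'$, which is trivial. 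The construction is arranged so that discarding the contribution of $\M$'s Weyl action returns exactly Definition~\ref{nghweyl}, which is the precise sense in which the new action \emph{integrates} the two. At the levels $K$ with $H<K\le G$ the smaller order $|W_G(K)|=|G/K|$ forces the shuffle to remain dominant on the free generators $N(\bfm^{\times|G/K|})$, leaving only the powers of $\g$ that generate $W_K(H)$ available as a factorwise twist, with the action on the transfer summand $Im(tr^K_{K''})$ induced from level $K''$.

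It remains to check that these level-wise prescriptions assemble into a single Weyl action, that is, that they commute with all restriction and transfer maps and descend through the defining quotients. Descent is the routine half: the Tambara- and Frobenius-reciprocity submodules are Weyl-equivariant, and the defining polynomials $g_H$, $f$ and the monomials $(\uab)^{K'}_k$ are built from Weyl conjugates of their inputs, so the factorwise $\g$-twist, being a restriction of the already-legal automorphism $\ngh(c_\g)$, sends generators of these submodules to generators; compatibility with the structure maps among the levels $H'\le H$ is likewise immediate from the box-product formulas of Definition~\ref{bxproddef}. I expect the main obstacle to be the compatibility across the boundary between the shuffle-dominant levels $K>H$ and the twisted levels $H'\le H$: one must verify that $res^K_H$ of Definition~\ref{resdef}, which lays down $|K/H|$ copies of $\bfm^{\otimes|G/K|}$, intertwines the shuffle upstairs with the twisted shuffle downstairs, and this is precisely where the choice of twist powers at each level is pinned down. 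This boundary identity is also the payoff, for it is exactly what will later yield the invariance $N^K_H(\g^s x)=N^K_H(x)$ and the formula $res^K_HN^K_H(x)=\prod_{\g^s\in W_K(H)}\g^s x$ of Properties~5 and~6 of Definition~\ref{axiomatictfdef}; I would therefore carry out the boundary check by unwinding Definition~\ref{resdef} against the Weyl-equivariance already recorded for the reciprocity submodules.
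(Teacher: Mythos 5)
Your strategy has the right spirit---combine the cyclic shuffle with the genuine $G$-action on the factors, level by level---and your functoriality device (conjugation automorphisms $c_\g$ of $\ih\M$ pushed through $\ngh$ via Theorem \ref{is_a_functor}) is a clean way to get well-definedness on the quotients. You also correctly locate the crux at the boundary between levels $K>H$ and levels $H'\le H$. But that is exactly where your plan has a genuine gap: you propose to keep the restriction maps of Definition \ref{resdef} (the untwisted diagonal $N(\bfm^{\times|G/K|})\mapsto \botimes_{|K/H|}\bfm^{\otimes|G/K|}$) and to \emph{verify} equivariance, treating the twist powers of the action as the only free parameters. No choice of twist powers can make this work. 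Indeed, once the level-$H$ action is ``shuffle plus factorwise $\g$'' (which is what integrating $\M$'s action demands), applying $\g$ downstairs to the periodic tensor $\botimes_{|K/H|}\bfm^{\otimes|G/K|}$ yields the periodic tensor $\botimes_{|K/H|}\left(\g m_{\g^{|G/K|-1}}\otimes \g m_e\otimes\cdots\otimes\g m_{\g^{|G/K|-2}}\right)$, so equivariance against the untwisted diagonal forces the level-$K$ action to be uniform factorwise $\g$ plus shuffle. That prescription iterated $|G/K|$ times gives $N\left((\g^{|G/K|}\bfm)^{\times|G/K|}\right)$, and $\g^{|G/K|}$ generates $W_K(H)$, which acts nontrivially on $\M(G/H)$; so it has order $|G/H|$, not $|W_G(K)|=|G/K|$, and is not a $W_G(K)$-action at all. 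In particular your guess that the factorwise twist at level $K$ should be by ``powers of $\g$ generating $W_K(H)$'' fails both the order check and the boundary check.

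The missing idea---and the actual content of the paper's proof---is that the restriction maps must be \emph{redefined} together with the action, not verified against Definition \ref{resdef}: the paper sets $res^K_{K''}(N(\bfm^{\times|G/K|}))$ equal to the \emph{twisted} diagonal $\botimes_{j=0}^{|K/K''|-1}(\g^{j|G/K|}\bfm)^{\otimes|G/K|}$ when $K''=H$, and $N\left(\prod_{j=0}^{|K/K''|-1}(\g^{j|G/K|}\bfm)^{\times|G/K|}\right)$ when $K''>H$, while the level-$K$ action puts $\g$ on the unwrapped factors and $\g^{r}$, $r=|G/H|-|G/K|+1$, on the wrapped one. With these choices the order works out (iterating $|G/K|$ times accumulates factorwise $\g^{|G/H|}$, which is trivial on $\M(G/H)$) and the twisted diagonal intertwines the two levels---one can check this directly for $G=C_4$, $H=e$, $K=C_2$. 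This redefinition is not bookkeeping: it is precisely what later produces $res^G_HN(a)=a\otimes\g a\otimes\cdots\otimes\g^{|G/H|-1}a$ and hence $res^K_HN^K_H(x)=\prod_{\g^s\in W_K(H)}\g^s x$ in the proof of Proposition \ref{gcmtotf}, the ``payoff'' you point to. Keeping Definition \ref{resdef} unchanged would instead give $res^G_HN(a)=a^{\otimes|G/H|}$ and the wrong Tambara formula, so the boundary identity you hoped to verify is false as stated and must instead be built into new structure maps (with Theorem \ref{compositioniso} then showing the resulting Mackey functor is isomorphic to the original one).
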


\begin{proof}
We define the Weyl action on $\ngh\ih\M$ as follows.
 
\begin{itemize}
\item For all subgroups $H'$ of $H$, the generator $\g$ of $W_G(H')$ acts on a simple tensor of $(\ngh\ih\M)(G/H')$ by  cyclically permuting the factors of the tensor product {\emph{and}} by acting on each factor. Thus, \begin{eqnarray*}\label{niact1} \g \left(\bfm^{\otimes |G/H|} \right)& =&   \g m_{\g^{|G/H|-1}} \otimes \g m_e \otimes \g m_{\g} \otimes \cdots \otimes \g m_{\g^{|G/H|-2}}. \hspace{30pt}  \end{eqnarray*} 
\item Let $r = |G/H|-|G/K|+1$. For all subgroups $K$ such that $H<K\le G$ the generator $\g$ of $W_G(K)$ acts on a generator $N\left(\bfm^{\times|G/K|}\right)$ of the image of the free summand of $(\ngh\ih\M)(G/K)$ by \begin{eqnarray*}\label{niact2} \g N\left(\bfm^{\times |G/K|} \right) &=& N\left(\g^r m_{\g^{|G/K|-1}} \times \g m_e \times \g m_{\g} \times \cdots \times \g m_{\g^{|G/K|-2}}\right). \hspace{30pt} \end{eqnarray*}  
\end{itemize}

Further, the restriction maps of $\ngh\ih\M$ must remain compatible with the Weyl action. So, let $\left( g \bfm\right)^{\otimes |G/H|}$ (or $\left(g\bfm\right)^{\times |G/K|}$) denote $g$ acting on each factor of the  product: $$\left(g \bfm\right)^{\otimes |G/H|} = g m_e \otimes g m_{\g} \otimes \cdots \otimes g m_{\g^{|G/H|-1}}.$$  If $K=C_{p^j}$ is a subgroup such that $H<K\le G$, then  $$res^j_{j-1} (N(\bfm^{\times|G/K|})) = \begin{cases} N\left(\prod\limits_{i=0}^{p-1}\left(\g^{i|G/K|}\bfm\right)^{\times|G/K|}\right) & \text{ if } j-1>k \\ \botimes\limits_{i=0}^{p-1}\left(\g^{i|G/K|}\bfm\right)^{\otimes |G/K|} & \text{ if } j-1=k \end{cases}.$$
\end{proof}
 
 \begin{thm}\label{compositioniso}
Let $\M$ be a $G$-Mackey functor. The $G$-Mackey functor $\ngh\ih\M$ with the Weyl action from Proposition \ref{newaction} is isomorphic to $\ngh\ih\M$ with the original Weyl action described in  Fact \ref{nghweyl}. 
\end{thm}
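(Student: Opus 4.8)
The plan is to construct an explicit isomorphism $\Theta\co \ngh\ih\M \to \ngh\ih\M$ from the functor equipped with the Weyl action of Definition \ref{nghweyl} to the one equipped with the action of Proposition \ref{newaction}, defined level by level as a reindexing that absorbs the extra Weyl twist. On each module, indexed by factors $0,1,2,\dots$, I would have $\Theta$ multiply the $i$-th factor by $\g^i$, acting through the $W_G(H')$- or $W_G(H)$-action that $\ih\M$ inherits from $\M$. Concretely, for a subgroup $H'$ of $H$ I would set
$$\Theta_{H'}(m_e \otimes m_{\g} \otimes \cdots \otimes m_{\g^{|G/H|-1}}) = m_e \otimes \g m_{\g} \otimes \cdots \otimes \g^{|G/H|-1} m_{\g^{|G/H|-1}},$$
and for a subgroup $K$ with $H<K\le G$ I would set, on generators of the free summand,
$$\Theta_K\left(N(\bfm^{\times |G/K|})\right) = N\left(m_e \times \g m_{\g} \times \cdots \times \g^{|G/K|-1} m_{\g^{|G/K|-1}}\right),$$
extending $\Theta$ to each transfer summand $Im(tr^K_{K''})$ by passing $\Theta_{K''}$ to $W_K(K'')$-coinvariants (this is legitimate since $W_K(K'')=K/K''$ sits inside $W_G(K'')=G/K''$). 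As the inverse simply twists the $i$-th factor by $\g^{-i}$, the map $\Theta$ is a bijection by construction.

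Next I would check that $\Theta$ is a morphism of $G$-Mackey functors. Intertwining the two Weyl actions is a direct computation: applying the Definition \ref{nghweyl} action and then $\Theta$ yields the same twisted, cyclically permuted tuple as applying $\Theta$ and then the Proposition \ref{newaction} action, because the trailing (wrapped-around) factor picks up exactly the exponent needed to close up -- namely $\g^{|G/H|}$, the generator of $W_H(H')$, at the levels $H'\le H$, where no further relation is required, and $\g^{r}$ with $r=|G/H|-|G/K|+1$ at the levels $K$, where the match uses that $\g^{|G/H|}$ acts trivially on $\M(G/H)$. Compatibility with the transfer maps is automatic, since in both structures the transfer is the quotient onto the coinvariant summand and $\Theta$ intertwines the relevant $W_K(K'')$-actions, hence descends. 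The essential check is compatibility with restriction, where the untwisted formula of Definition \ref{resdef} must agree with the twisted formula of Proposition \ref{newaction}: for instance when $K''=H$, applying $\Theta_H$ to the Definition \ref{resdef} value $\botimes_{|K/H|}\bfm^{\otimes|G/K|}$ places $\g^{\,j|G/K|+i}m_{\g^i}$ in block $j$, position $i$, and this is exactly $res^K_H(\Theta_K N(\bfm^{\times|G/K|}))$ computed with the Proposition \ref{newaction} formula, whose block twist $\g^{\,j|G/K|}$ combines with the position twist $\g^i$ introduced by $\Theta_K$. The cases $K''>H$ are handled by the same bookkeeping.

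Finally I would verify that $\Theta$ is well defined. At the levels $H'\le H$ this amounts to checking that twisting the $i$-th tensor factor by $\g^i$ respects the Frobenius reciprocity relations defining the box product; this holds because each $\g^i$ is an automorphism of $\ih\M$ as an $H$-Mackey functor (as $G$ is abelian, $\g^i$ fixes every subgroup and the conjugation action commutes with all restrictions and transfers), so $\Theta_{H'}$ is the $H/H'$-evaluation of a box product of such automorphisms. At the levels $K$ with $H<K\le G$ one must check that $\Theta_K$ carries the Tambara reciprocity submodule to itself; this follows from Fact \ref{monfacts} and the universal nature of the polynomials $(\baj\bbj)^{K'}_k$, $g_H(\baj,\bbj)$, and $f(\bfx)$, together with the fact, established above, that $\Theta$ commutes with the restriction and transfer maps out of which these relations are built. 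I expect this last point to be the main obstacle: keeping the indexing straight so that the factor twists introduced by $\Theta$ align with the block twists $\g^{\,j|G/K|}$ in the restriction formula while simultaneously preserving the reciprocity relations coherently across all levels $K''\le K$. Once well-definedness and compatibility are in hand, $\Theta$ and its factorwise inverse are mutually inverse morphisms of $G$-Mackey functors, yielding the desired isomorphism.
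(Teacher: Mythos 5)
Your proposal is correct and is essentially the paper's own proof: the paper constructs exactly the same isomorphism, given by $1\otimes\g\otimes\cdots\otimes\g^{|G/H|-1}$ on the tensor summands and $1\times\g\times\cdots\times\g^{|G/K|-1}$ on the free summands, with the intertwining check resting, as in your computation, on $\g^{|G/H|}$ generating $W_H(H')$ at levels $H'\le H$ and acting trivially on $\M(G/H)$ at levels $K>H$. The only difference is that the paper simply declares the map to be the identity on the images of the transfer maps, whereas you describe it as the map induced on $W_K(K'')$-coinvariants by the isomorphism one level down --- which is the more precise formulation, since the two transfer summands are quotients by different Weyl actions and only the induced map is literally well defined.
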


\begin{proof}
Let $U\ih\M$ denote the underlying $H$-Mackey functor of $\ih\M$. So, $U\ih\M$ does not remember the Weyl action from $\M$. Hence, we can let $\ngh U\ih\M$ denote $\ngh\ih\M$ with the Weyl action as defined in Definition \ref{nghweyl}. We will define an isomorphism $\chi\co \ngh U\ih\M \to \ngh\ih\M$ by defining a collection of isomorphisms $$\{\chi_P\co (\ngh U\ih\M)(G/P) \to (\ngh\ih\M)(G/P) \text{ for all } P\le G\}.$$ First, if $H' \le H$, then define $$\chi_{H'}\co   (U\ih\M)^{\bx|G/H|}(H/H') \to (\ih \M)^{\bx|G/H|}(G/H')$$ to be   $1\otimes \g \otimes \cdots \otimes \g^{|G/H|-1}$ on the tensor summand. On the image of the transfer map define $\chi_{H'}$ so that the appropriate diagram commutes.  Similarly, for subgroups $K$ such that $H<K\le G$ let $\chi_K$ be $1 \times \g \times \cdots \times \g^{|G/K|-1}$ on the image of the free summand and on the image of the transfer require that the appropriate diagram commutes. 
\end{proof}

Finally, we complete the proof of Theorem \ref{mainthm2} by proving Proposition \ref{gcmtotf}.

\begin{proof}[Proof of Proposition \ref{gcmtotf}] Since $\M$ is a $G$-commutative monoid,  $(-) \otimes \M$ extends to a functor $\set \to \mackg$. We will first show that $\M$ is a commutative $G$-Green functor by showing that $\M$ satisfies the categorical definition of a Green functor as given in \cite{LewisGF} or \cite{Ventura}. We will then show that the codomain of $(-)\otimes \M$ is $\greeng$. We need the latter fact so that the internal norm maps that we will define to make $\M$ into a Tambara functor are multiplicative.

 Let $*$ be the orbit $G/G$ in $\set$.  The projection map $p\co * \amalg * \to *$ induces a multiplication map $\M \bx \M \to \M$ on $\M$, and the  inclusion map $i\co \emptyset \hookrightarrow *$ induces a unit map $\A \to \M$.  Applying   $(-) \otimes \M$ to the following three diagrams in $\set$ results in the  commutative diagrams in $\mackg$ needed to make $\M$ a $G$-Green functor.
 $$\xymatrix@=1pc{{*}\amalg{*} \amalg{*} \ar[rr]^{id \amalg p} \ar[d]_{p \amalg id} && {*} \amalg {*} \ar[d]^p \\ {*}\amalg{*} \ar[rr]^p && {*}} $$ $$\xymatrix@=1pc{\emptyset \amalg {*} \ar[rr]^{i \amalg id} \ar[rrd]_{=}& & {*} \amalg {*} && \emptyset \amalg{*} \ar[ll]_{i \amalg id} \ar[lld]^{=} \\  & & {*}} \hspace{.5in} \xymatrix@=1pc{{*} \amalg {*} \ar[rr]^{\tau} \ar[rd]_p & & {*} \amalg {*} \ar[ld]^p \\ & {*} & }$$ To show that the codomain of $(-)\otimes \M$ is $\greeng$ we note that $G/H\otimes \M$ is a commutative Green functor for all orbits of $G$ because $\M$ is a commutative Green functor and  both functors $\ngh$ and $\ih$ are strong symmetric monoidal. Then given a map $f\co G/H \to G/K$ in $\set$ we show that the induced map $G/H \otimes \M \to G/K \otimes \M$ is a morphism in $\greeng$ by applying $(-)\otimes \M$ to the diagrams below. $$\xymatrix@=1pc{G/H \amalg G/H \ar[rr]^(.6){p} \ar[dd]_{f\amalg f}& &G/H \ar[dd]^f  \\ \\ G/K \amalg G/K \ar[rr]^(.6){p} & & G/K} \hspace{.5in} \xymatrix@=1pc{ \emptyset \ar@{^{(}->}[rr]^i \ar@{^{(}->}[ddrr]_i & & G/H \ar[dd]^f  \\  \\& & G/K}$$

It remains to define norm maps $N^K_H\co \M(G/H) \to \M(G/K)$ for all subgroups $H<K\le G$. However, we need only construct the norm maps $N^G_H$ since we can subsequently  build every $N^K_H$ by applying the process below to $\ik\M$. Let  $\pi^*\co N^G_Hi_H^*\M \to \M$ be the map induced from  $\pi\co G/H \to G/G$. Since $\ngh\ih\M$ has the Weyl action described in Proposition \ref{newaction}, $(\ih\M)(H/H)$ is isomorphic to $\M(G/H)$. Then let the map $N\co \M(G/H) \to (\ngh\ih\M)(G/G)$ be as given in Remark \ref{DefofN}, and define the norm map $\ngh$ by the composition
$$\M(G/H) \xrightarrow{N} (\ngh\ih\M)(G/G) \xrightarrow{\pi^*_G} \M(G/G).$$  Since the functor $\ngh \co \mackh \to \mackg$ satisfies Property (a) of the Main Theorem, the above composition satisfies Property 3 of Definition \ref{axiomatictfdef}. The composition satisfies Tambara reciprocity by the construction of the functor $\ngh$. 

Next we show that the norm map $\pi^*_GN$ factors through the Weyl action (i.e. that $\pi^*_GN$ satisfies Property 5 of Definition \ref{axiomatictfdef}). The Weyl action on $G/H \otimes \M$ is induced from automorphisms of $G/H$, which are given by multiplication by $\g^j$ for some $\g^j$ in $W_G(H)$. Hence, the commutative diagram of $G$-sets on the left below  induces the commutative diagram of Mackey functors on the right. $$\xymatrix{ G/H \ar[r]^{\g^j\cdot} \ar[dr]_{\pi} & G/H \ar[d]^{\pi} \\ & G/G} \hspace{1in} \xymatrix{ G/H \otimes \M \ar[r]^{(\g^j\cdot)^*} \ar[dr]_{\pi^*} & G/H\otimes \M \ar[d]^{\pi^*} \\ & \M}$$ It follows that $\pi^*_G(N(\g^jx)) = \pi^*_G(N(x))$ for all $x$ in $\M(G/H)$.

 Finally, we show that $res^G_H\pi^*_GN(x) = \prod_{\g^j\in W_G(H)} \g^j x$ for all $x$ in $\M(G/H)$. By Proposition \ref{newaction} and properties of morphisms of Mackey functors we have
 $$res^G_H \pi^*_GN(a) 
= \pi^*_H res^G_H N(a) 
=\pi^*_H\left(a \otimes \g a\otimes \cdots \otimes \g^{|G/H|-1} a \right).$$  Since the $G$-symmetric monoidal structure is compatible with the forgetful functor $i_H\co \set \to \mathscr{S}et^{Fin}_H$, it follows that $\pi^*_H$ is induced from $i_H \pi$, which is the fold map $\coprod_{|G/H|} H/H \to H/H$.  Therefore, $$\pi_H^*\left(a\otimes \g a\otimes \cdots \otimes \g^{|G/H|-1}\right) = a\g a \cdots \g^{|G/H|-1} a. \qedhere$$ \end{proof}

\section*{Acknowledgments} The authors were supported by NSF Grant DMS--1207774. The authors thank Kate Ponto for her help and suggestions in organizing and writing this paper. They  also thank the referee for their meticulous reading of this paper and their numerous valuable recommendations.
 
\bibliographystyle{alpha}
\bibliography{ref}

\begin{thebibliography}{Tam93}

\bibitem[Ben98]{Benson}
Dave Benson.
\newblock {Problem session, Seattle 1996}.
\newblock In Alejandro Adem, Jon Carlson, Stewart Priddy, and Peter Webb,
  editors, {\em Group Representations: Cohomology, Group Actions and Topology},
  volume~63 of {\em Proceedings of Symposia in Pure Mathematics}, pages
  519--532. American Mathematica Society, 1998.

\bibitem[Bru07]{Brun}
Morten Brun.
\newblock Witt vectors and equivariant ring spectra applied to cobordism.
\newblock {\em Proc. London Math. Soc.}, 94(2):351--385, 2007.

\bibitem[HH16]{HillHopkins}
Michael~A. Hill and Michael~J. Hopkins.
\newblock Equivariant symmetric monoidal structures.
\newblock arXiv:1610.031144, 2016.

\bibitem[Hoy14]{Hoyer}
Rolf Hoyer.
\newblock {\em Two Topics in Stable Homotopy Theory}.
\newblock PhD thesis, University of Chicago, 2014.

\bibitem[Lew81]{LewisGF}
L.~Gaunce Lewis, Jr.
\newblock {The theory of Green functors}.
\newblock Unpublished manuscript, 1981.

\bibitem[Lew88]{LewisLT}
L.~Gaunce Lewis, Jr.
\newblock {The $RO(G)$-graded equivariant ordinary cohomology of complex
  projective spaces with linear $\mathbb{Z}/p$ actions}.
\newblock {\em Lecture Notes in Math.}, 1361:53--122, 1988.

\bibitem[May96]{alaska}
J.~P. May.
\newblock {\em Equivariant homotopy and cohomology theory}, volume~91 of {\em
  CBMS Regional Conference Series in Mathematics}.
\newblock Published for the Conference Board of the Mathematical Sciences,
  Washington, DC, 1996.
\newblock With contributions by M. Cole, G. Comeza{\~n}a, S. Costenoble, A. D.
  Elmendorf, J. P. C. Greenlees, L. G. Lewis, Jr., R. J. Piacenza, G.
  Triantafillou, and S. Waner.

\bibitem[ML98]{maclane}
Saunders Mac~Lane.
\newblock {\em Categories for the working mathematician}, volume~5.
\newblock Springer-Verlag, 1998.

\bibitem[Shu10]{Shulman}
Megan Shulman.
\newblock {\em {Equivariant local coefficients and the $RO(G)$-graded
  cohomology of classifying spaces}}.
\newblock PhD thesis, University of Chicago, 2010.

\bibitem[Str12]{Strickland}
Neil Strickland.
\newblock Tambara functors.
\newblock arXiv:1205.2516, 2012.

\bibitem[Tam93]{Tambara}
D.~Tambara.
\newblock On multiplicative transfer.
\newblock {\em Comm. Algebra}, 21(4):1393--1420, 1993.

\bibitem[Ull13]{Ullman1}
John Ullman.
\newblock {Symmetric powers and norms of Mackey functors}.
\newblock arXiv:1304.5648v2, 2013.

\bibitem[Ven05]{Ventura}
Joana Ventura.
\newblock {Homological algebra for the representation Green functor for abelian
  groups}.
\newblock {\em Trans. Amer. Math. Soc.}, 357:2253--2289, 2005.

\bibitem[Web00]{webb}
Peter Webb.
\newblock {A guide to Mackey functors}.
\newblock In M.~Hazewinkel, editor, {\em Handbook of Algebra}, volume~2, pages
  805--836. Elsevier, 2000.

\end{thebibliography}

\end{document}